\numberwithin{equation}{section}
\newtheorem{thm}[equation]{Theorem}
\newtheorem{lem}[equation]{Lemma}
\newtheorem{prop}[equation]{Proposition}
\newtheorem{cor}[equation]{Corollary}
\newtheorem*{cor*}{Corollary}
\newtheorem*{claim*}{Claim}
\theoremstyle{definition}
\newtheorem{defn}[equation]{Definition}
\newtheorem*{exs}{Examples}
\theoremstyle{remark}
\newtheorem*{rem}{Remark}
\newcommand{\zz}{\mathbb{Z}}
\newcommand{\qq}{\mathbb{Q}}
\newcommand{\rr}{\mathbb{R}}
\newcommand{\PP}{\mathcal{P}}
\newcommand{\sss}{\mathcal{S}}
\newcommand{\GG}{\mathcal{G}}
\newcommand{\Cab}{C_{(a,b)}}
\newcommand{\Ccd}{C_{(c,d)}}
\newcommand{\gen}{\operatorname{gen}}
\newcommand{\id}{\operatorname{id}}
\newcommand{\curv}{\operatorname{curv}}
\newcommand{\Symm}{\operatorname{Symm}}
\newcommand{\Stab}{\operatorname{Stab}}
\newcommand{\SL}{\operatorname{SL}}
\newcommand{\GL}{\operatorname{GL}}
\newcommand{\PSL}{\operatorname{PSL}}
\newcommand{\PGL}{\operatorname{PGL}}
\renewcommand{\labelenumi}{(\roman{enumi})}
\author{Michael Ching}
\author{John R. Doyle}
\thanks{Research for this paper was supported by NSF VIGRE grant DMS-0738586.}
\title{Apollonian circle packings of the half-plane}
\begin{document}

\begin{abstract}
We consider Apollonian circle packings of a half Euclidean plane. We give necessary and sufficient conditions for two such packings to be related by a Euclidean similarity (that is, by translations, reflections, rotations and dilations) and describe explicitly the group of self-similarities of a given packing. We observe that packings with a non-trivial self-similarity correspond to positive real numbers that are the roots of quadratic polynomials with rational coefficients. This is reflected in a close connection between Apollonian circle packings and continued fractions which allows us to completely classify such packings up to similarity.\\

\noindent \textsc{AMS 2010 Subject Classifications: } 52C26; 11A55\\

\noindent \textsc{Keywords: } Apollonian circle packings; similarity; continued fractions
\end{abstract}

\maketitle

\section{Introduction}\label{sec:intro}

A circle packing in $\rr^2$ is a set of circles in the plane whose interiors (suitably interpreted) are mutually disjoint. An \textbf{Apollonian circle packing} $\PP$ has the property that for any three mutually tangent circles in $\PP$, the two circles in the plane that are tangent to all three of them also lie in $\PP$. Note that our notion of circle includes straight lines where we consider parallel lines to be tangent at infinity.

These types of circle packings have been extensively studied by Graham, et al \cite{graham:2003,graham:2005,graham:2006a,graham:2006b}, with a focus on those packings for which all the circles have integer curvatures.

There are four basic shapes that an Apollonian packing may take, and these are illustrated in Figure~\ref{fourtypes}. A \textbf{bounded} Apollonian packing (Figure~\ref{bddpack}) is a packing $\PP$ for which a single circle in $\PP$ bounds the entire packing. Here the `interior' of the bounding circle is the \emph{unbounded} component of its complement.

A \textbf{half-plane} packing (Figure~\ref{hppack}) is an Apollonian packing $\PP$ for which at least one of its circles is a straight line. The line partitions the plane into two half-planes: one is `packed' by $\PP$, while the other is the `interior' of the line. A special type of half-plane packing is a \textbf{strip} packing (Figure~\ref{strippack}), in which two of the circles are (necessarily parallel) lines and the remaining circles lie in the strip between them.

An \textbf{unbounded} packing (Figure~\ref{unbddpack}) is an Apollonian packing which contains no bounding circle and no straight line.

\begin{figure}[h]
	\centering
    \subfigure[Bounded]{\label{bddpack}\includegraphics[scale=.4]{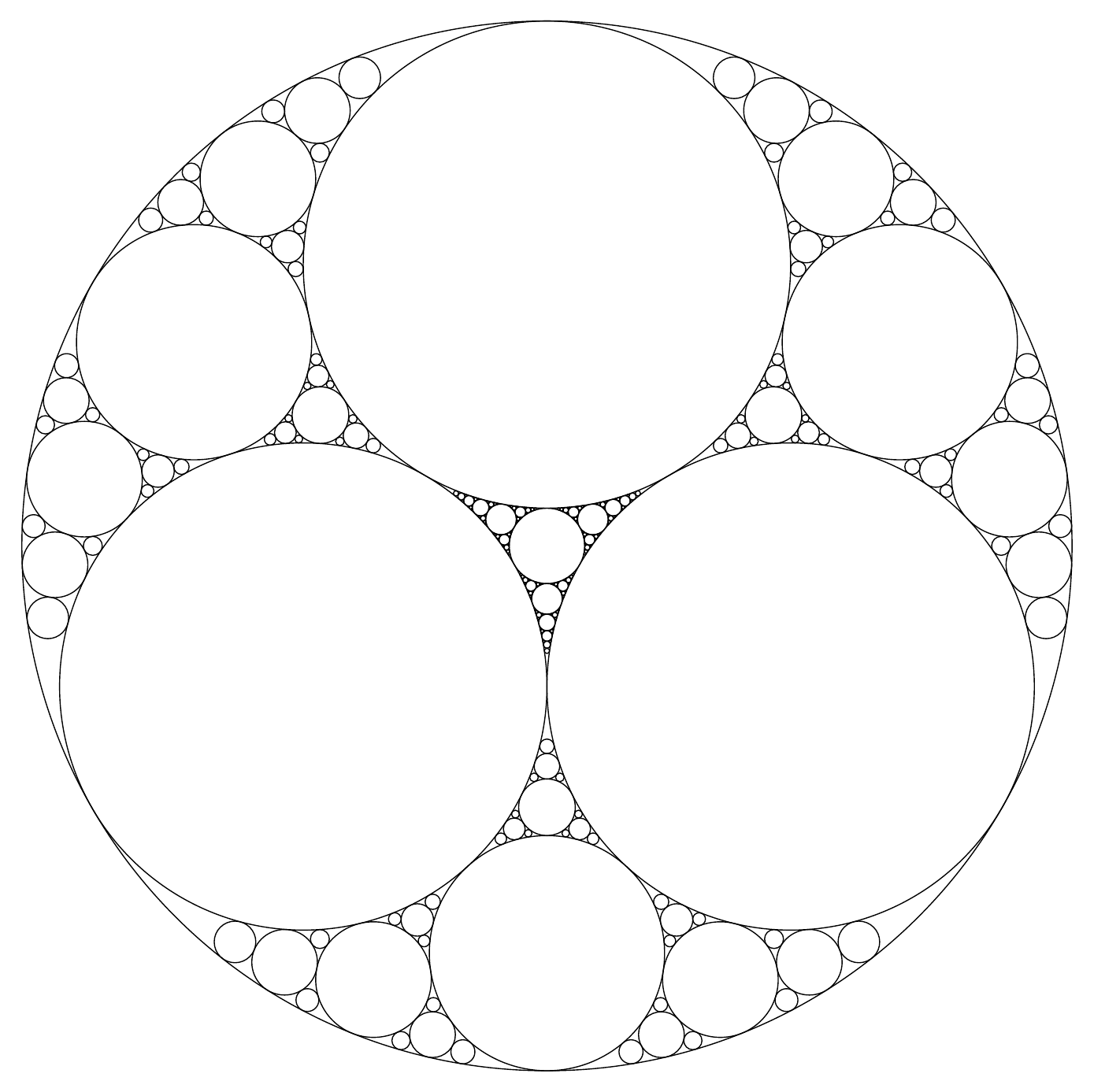}} \hspace{20mm}
    \subfigure[Half-plane]{\label{hppack}\includegraphics[scale=.4]{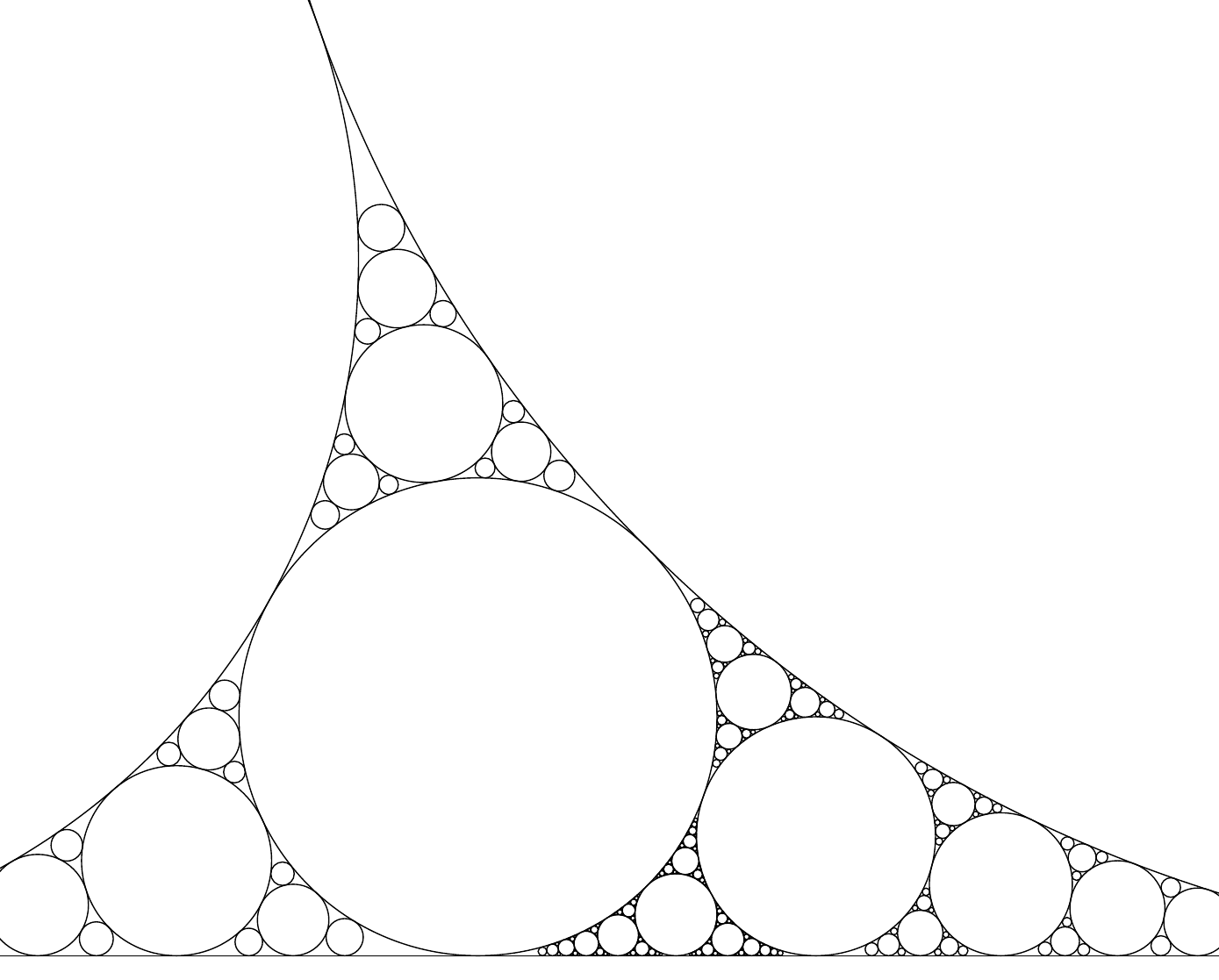}}\\
    \subfigure[Strip]{\label{strippack}\includegraphics[scale=.4]{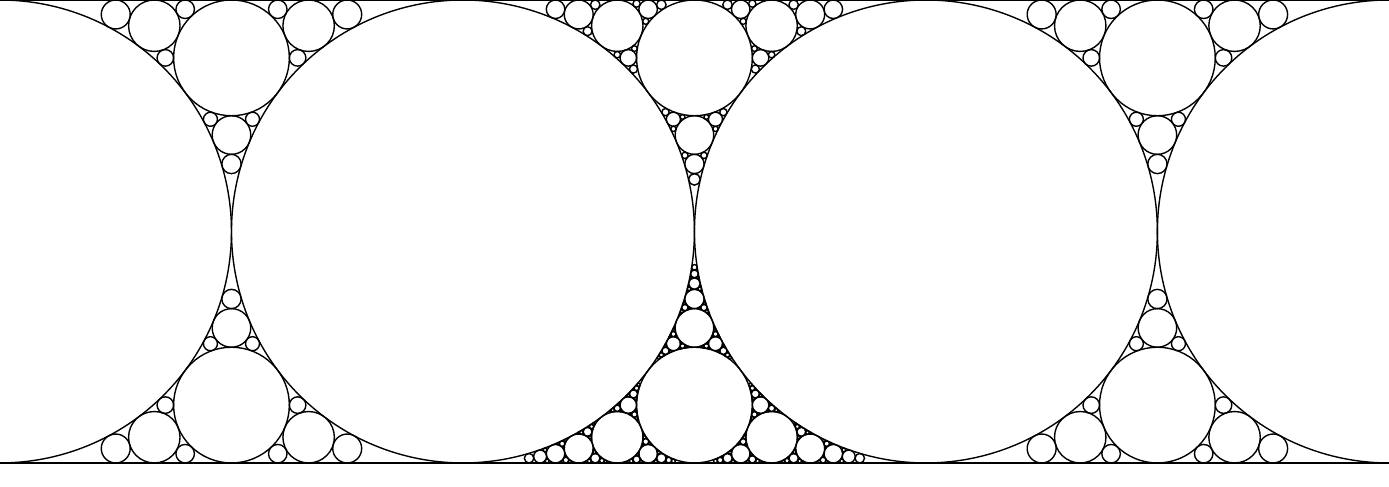}}
\hspace{20mm}
    \subfigure[Unbounded]{\label{unbddpack}\includegraphics[scale=.4]{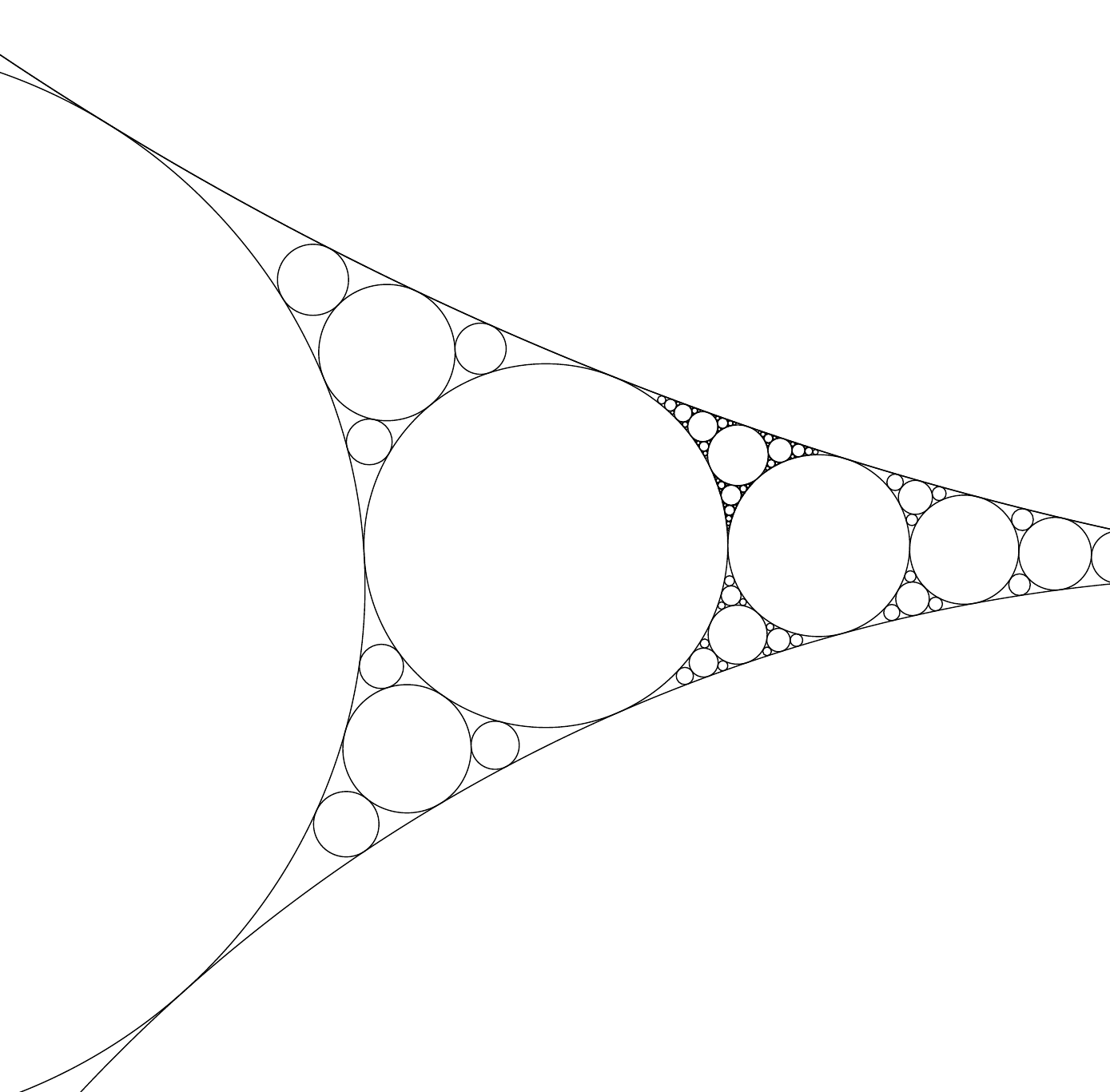}}
	\caption{Different boundedness properties for Apollonian packings.}
	\label{fourtypes}
\end{figure}

Stereographic projection allows us to relate circle packings in the plane to those on the sphere. The four possible configurations in Figure \ref{fourtypes} correspond to projection from (a) an interior point, (b) a point on only one circle, (c) a tangency point, and (d) a point not on or inside any circle, respectively.

In this paper, we consider the similarity relation on Apollonian circle packings. A \textbf{similarity} is a transformation of the Euclidean plane that preserves ratios of lengths. Such a transformation is necessarily a composite of a translation, rotation, reflection and/or dilation. Two packings $\PP$ and $\PP'$ are \textbf{similar} if there is a similarity of the plane that takes circles in $\PP$ bijectively to circles in $\PP'$. There may be non-trivial similarities from $\PP$ to itself, in which case we say that $\PP$ is \textbf{self-similar} and we consider its group of self-similarities.

Our results concern only half-plane packings. We give a necessary and sufficient condition for two such packings to be similar, a classification of the self-similar packings, and a description of all of the self-similarity groups.

We also answer the more specific question of whether two packings are similar via an \textbf{orientation-preserving} similarity (that is, one with positive determinant) or via an \textbf{orientation-reversing} similarity. Our classification tells us which packings possess an orientation-reversing self-similarity.

We observe that any half-plane packing $\PP$ is similar to a packing containing three circles in the configuration shown in Figure~\ref{P_alpha}, where $L$ is the $x$-axis, and $\alpha^2$ and $1$ refer to the curvatures of the circles they label. For $\alpha > 0$, we define $\PP_{\alpha}$ to be the unique Apollonian circle packing containing that configuration. Because each half-plane packing $\PP$ is similar to such a packing, we restrict our attention to studying the packings $\PP_{\alpha}$, and we state results in terms of this particular class of packings.

\begin{figure}[h]
	\begin{overpic}[scale = 1]{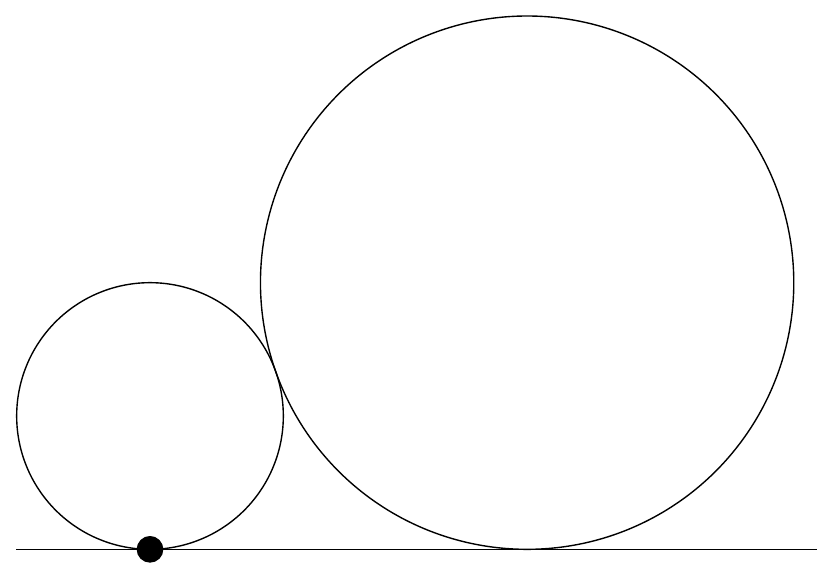}
	\put(35,40){\huge $\alpha^2$}
	\put(148,74){\huge $1$}
	\put(31,-5){$(0,0)$}
	\put(223,10){\large $L$}
	\end{overpic}
	\caption{The generating triple for the packing $\PP_{\alpha}$. The numbers $\alpha^2$ and $1$ represent the curvatures of their respective circles, and $L$ is a straight line.}
  \label{P_alpha}
\end{figure}

Our first result relates similarities of half-plane packings to elements of the projective general linear group $\PGL_2(\zz)$. This is the quotient of the group $\GL_2(\zz)$ of invertible $2 \times 2$-matrices with integer entries, by the subgroup consisting of $\pm I$ where $I$ denotes the identity matrix. We also refer to the subgroup $\PSL_2(\zz)$ consisting of those elements whose underlying matrices have determinant $1$.

\begin{thm} \label{thm1-intro}
Let $\alpha,\beta > 0$. There is a bijection between the set of similarities preserving the $x$-axis that map $\PP_{\beta}$ to $\PP_{\alpha}$ and the set of elements
$\begin{bmatrix}
a & b\\
c & d
\end{bmatrix}
\in \PGL_2(\zz)$
 such that $\dfrac{a\alpha + b}{c\alpha + d} = \beta$. The similarity is orientation-preserving if and only if the corresponding element is in $\PSL_2(\zz)$. In particular, $\PP_{\alpha}$ and $\PP_{\beta}$ are similar (resp. similar via an orientation-preserving similarity) if and only if there exist integers $a,b,c,d$, with $ad - bc = \pm 1$ (resp. $+1$), such that $\dfrac{a\alpha + b}{c\alpha + d} = \beta$.
\end{thm}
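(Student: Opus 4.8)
The plan is to set up coordinates explicitly and track how a similarity acts on the generating triple of a half-plane packing.

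The plan is to coordinatize the circles of $\PP_\alpha$ that are tangent to the $x$-axis $L$ by integer vectors, and to show that a similarity fixing $L$ acts on these coordinates through $\GL_2(\zz)$. To a circle tangent to $L$ at the point $t$ with curvature $c>0$ I associate the ``spinor'' $\xi = (\sqrt c,\, t\sqrt c)^{T}$. A direct computation with the tangency condition for two circles tangent to a common line shows that two such circles are tangent precisely when $|\det(\xi_1,\xi_2)| = 2$. The first step, which I expect to be the crux, is to prove that the assignment $\xi = M_\alpha v$ for the fixed matrix $M_\alpha = \begin{bmatrix} \alpha & 1 \\ 0 & 2/\alpha\end{bmatrix}$ sets up a bijection between the circles of $\PP_\alpha$ tangent to $L$ (other than $L$ itself) and the primitive vectors $v \in \zz^2$ taken up to sign; here the generating circles of curvature $\alpha^2$ and $1$ correspond to $(1,0)$ and $(0,1)$. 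Since $\det M_\alpha = 2$, tangency of two circles then corresponds exactly to $\det(v_1,v_2) = \pm 1$, i.e.\ to the two index vectors forming a $\zz$-basis. Establishing that every primitive vector occurs, and occurs once, is the substantive point: it amounts to identifying the Stern--Brocot/Farey structure of the circles tangent to $L$ with that of $\qq\cup\{\infty\}$, and I would prove it by induction on the Apollonian generation process, using that the circle inscribed in a curvilinear triangle has index vector the ``mediant'' of two of its neighbours.

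Granting this, the second step is to record how an $x$-axis-preserving similarity $\phi$ acts. Such a $\phi$ restricts on $L$ to $t \mapsto \epsilon\lambda t + \mu$ with $\lambda > 0$ and $\epsilon = \pm 1$, and scales curvature by $1/\lambda$; hence on spinors it acts by the linear map $\Phi = \begin{bmatrix} 1/\sqrt\lambda & 0 \\ \mu/\sqrt\lambda & \epsilon\sqrt\lambda \end{bmatrix}$, whose determinant is $\epsilon$. The sign $\epsilon$ is $+1$ exactly when $\phi$ preserves orientation. If $\phi$ maps $\PP_\beta$ to $\PP_\alpha$, then comparing indices through $\xi_{\mathrm{new}} = \Phi\,\xi_{\mathrm{old}}$ gives, for the corresponding index vectors, $v' = N v$ with $N = M_\alpha^{-1}\Phi M_\beta$. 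Because $\phi$ carries the $L$-tangent circles of $\PP_\beta$ bijectively onto those of $\PP_\alpha$, the matrix $N$ must permute primitive vectors, i.e.\ $N \in \GL_2(\zz)$; and $\det N = \det(M_\alpha^{-1})\,\epsilon\,\det(M_\beta) = \epsilon$, so $N \in \PSL_2(\zz)$ iff $\phi$ is orientation-preserving.

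Conversely, given $N = \begin{bmatrix} a & b \\ c & d\end{bmatrix} \in \GL_2(\zz)$, I would set $\Phi := M_\alpha N M_\beta^{-1}$ and ask when $\Phi$ has the form above, i.e.\ when its $(1,2)$-entry vanishes and its $(1,1)$-entry is positive. A short computation shows the $(1,2)$-entry is a nonzero multiple of $\beta(b\alpha + d) - (a\alpha + c)$, so it vanishes exactly when $\beta = \frac{a\alpha + c}{b\alpha + d}$; with the transpose indexing convention (interchanging the roles of $(1,0)$ and $(0,1)$) this is precisely the condition $\frac{a\alpha + b}{c\alpha + d} = \beta$ of the statement. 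The positivity of the $(1,1)$-entry can always be arranged by replacing $N$ with $-N$, which represents the same element of $\PGL_2(\zz)$ and the same similarity; this is exactly why the bijection is with $\PGL_2(\zz)$ rather than $\GL_2(\zz)$. Thus each admissible $N$ yields a unique $\lambda>0$, $\mu$ and $\epsilon=\det N$, hence a unique $x$-axis-preserving similarity $\phi$ with $\phi(\PP_\beta)=\PP_\alpha$, inverse to the construction of the second step.

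Finally, the ``in particular'' clause is immediate: $\PP_\alpha$ and $\PP_\beta$ are similar (respectively, orientation-preservingly similar) via an $x$-axis-preserving map iff the displayed set of matrices is nonempty (respectively, meets $\PSL_2(\zz)$), which is the stated Diophantine condition. I expect the only delicate point to be the coordinatization lemma of the first step; the remainder is the linear-algebra bookkeeping sketched above, together with the remark that in a (non-strip) half-plane packing $L$ is the unique line, so any similarity between two such packings automatically preserves $L$, justifying the restriction to $x$-axis-preserving similarities.
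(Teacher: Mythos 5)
Your proposal is correct in outline, and its heart coincides with the paper's: the claim you rightly flag as the crux --- that the circles of $\PP_\alpha$ tangent to $L$ are in bijection with primitive integer vectors up to sign, with tangency corresponding to unimodularity --- is exactly the content of the paper's \S\ref{sec:acp} (the labelling of Definition~\ref{labelling} and Proposition~\ref{uniquecircle}), and your proposed proof by induction on the generation process using the mediant rule is the paper's proof; be aware that this takes real work (the paper needs the inversion $\iota$ in the circle $\mathcal{I}$ to handle circles in the unbounded interstice, and an elementary Diophantine lemma to get uniqueness of the label). Where you genuinely diverge is in how the bijection with $\PGL_2(\zz)$ is then extracted. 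The paper works only with curvatures: it proves $\curv(C_{(a,b)})=(a\alpha+b)^2$ via Descartes' theorem (Lemma~\ref{curv}), reads off the M\"obius condition from the curvature ratio of the images of the generators, and gets injectivity and surjectivity of the correspondence from Lemmas~\ref{3circ} and~\ref{similar}. You instead coordinatize each circle by the spinor $(\sqrt{c},\,t\sqrt{c})^{T}$, which also records the tangency point, and realize the whole correspondence as an explicit conjugation $N=M_\alpha^{-1}\Phi M_\beta$; this buys you existence and uniqueness of the similarity attached to a matrix in one linear-algebra computation (the vanishing of the $(1,2)$-entry is the M\"obius condition, the sign ambiguity is exactly the quotient to $\PGL_2$), at the cost of a transpose in the indexing convention, which is harmless for the stated bijection since transposition preserves $\PGL_2(\zz)$, $\PSL_2(\zz)$, and the determinant. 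Your closing remark that one may restrict to $x$-axis-preserving similarities because $L$ is the unique line in a non-strip packing matches the paper's treatment (Proposition~\ref{isomstab}); for completeness in the strip case one should note that a similarity can always be composed with the self-similarity exchanging the two lines.
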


Taking $\alpha = \beta$ in Theorem \ref{thm1-intro} helps us to calculate the self-similarity groups. We write $\Symm(\PP)$ for the self-similarity group of the packing $\PP$, and we write $\Symm_+(\PP)$ for the subgroup of $\Symm(\PP)$ consisting of orientation-preserving self-similarities of $\PP$.

\begin{thm}\label{symmgroup}
Let $\alpha > 0$. Then:
\begin{enumerate}
  \item If $\alpha \in \mathbb{Q}$, then $\PP_{\alpha}$ is a strip packing and
  \[ \Symm(\PP_{\alpha}) \cong D_{\infty} \times \mathbb{Z}/2\mathbb{Z} \]
  with subgroup
  \[ \Symm_+(\PP_{\alpha})\cong D_{\infty}, \]
  where $D_{\infty}$ denotes the infinite dihedral group.
  \item If $\alpha$ is quadratic over $\mathbb{Q}$, then
  \[ \Symm(\PP_{\alpha}) \cong \mathbb{Z}. \]
  Let $D$ denote the discriminant of the primitive integral polynomial with root $\alpha$. Then the subgroup $\Symm_+(\PP_{\alpha})$ is:
  \begin{itemize}
    \item equal to $\Symm(\PP_{\alpha})$ if the Pell equation $x^2 - Dy^2 = -4$ has no integral solution for $(x,y)$;
    \item the index $2$ subgroup of $\Symm(\PP_{\alpha})$ if $x^2 - Dy^2 = -4$ does have an integral solution.
  \end{itemize}
  \item Otherwise
  \[ \Symm_+(\PP_{\alpha}) = \Symm(\PP_{\alpha}) = 1. \]
\end{enumerate}
\end{thm}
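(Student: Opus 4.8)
The plan is to deduce the whole theorem from Theorem~\ref{thm1-intro} by translating self-similarities into the arithmetic of the $\PGL_2(\zz)$-action on $\rr \cup \{\infty\}$ by Möbius transformations $\begin{bmatrix} a & b \\ c & d\end{bmatrix} \cdot z = \frac{az+b}{cz+d}$. Setting $\beta = \alpha$ in Theorem~\ref{thm1-intro} gives a bijection between the self-similarities of $\PP_\alpha$ fixing the $x$-axis and the stabilizer $\Stab_{\PGL_2(\zz)}(\alpha) = \{ g \in \PGL_2(\zz) : g\alpha = \alpha\}$, under which the orientation-preserving self-similarities correspond exactly to $\Stab_{\PSL_2(\zz)}(\alpha)$. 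Since the $x$-axis is the line $L$ and any similarity carries lines to lines, a self-similarity of $\PP_\alpha$ either fixes $L$ or moves it to another line of the packing; hence $\Symm(\PP_\alpha) = \Stab_{\PGL_2(\zz)}(\alpha)$ whenever $\PP_\alpha$ has $L$ as its only line. The first step, therefore, is to show that $\PP_\alpha$ has a second straight line exactly when $\alpha \in \qq$: straight lines of a half-plane packing correspond to cusps, i.e.\ to points of $\qq \cup \{\infty\}$ in the $\PGL_2(\zz)$-orbit governing the packing, and such a second cusp arises precisely when $\alpha$ is itself $\PGL_2(\zz)$-equivalent to $\infty$, that is, precisely when $\alpha$ is rational. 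This isolates case (i) as the strip case and reduces cases (ii) and (iii) to a single stabilizer computation.

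For case (i), writing $\alpha = p/q$ in lowest terms, transitivity of $\PSL_2(\zz)$ on $\mathbb{P}^1(\qq)$ lets me conjugate $\alpha$ to $\infty$, so $\Stab_{\PGL_2(\zz)}(\alpha)$ is conjugate to $\Stab_{\PGL_2(\zz)}(\infty)$. The latter consists of the classes of the matrices $\begin{bmatrix} 1 & b \\ 0 & \pm 1\end{bmatrix}$ with $b \in \zz$; the determinant $+1$ elements form the translation subgroup $\cong \zz$ and the determinant $-1$ elements are involutions, so $\Stab_{\PGL_2(\zz)}(\infty) \cong D_{\infty}$ with orientation-preserving part $\cong \zz$. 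These account only for the $L$-preserving symmetries; the strip packing additionally admits the reflection $\sigma$ across its central line, which interchanges the two boundary lines. I would check that $\sigma$ is an orientation-reversing involution commuting with every $L$-preserving symmetry, so that $\Symm(\PP_\alpha) = \Stab_{\PGL_2(\zz)}(\alpha) \times \langle \sigma \rangle \cong D_{\infty} \times \zz/2\zz$. Restricting to the orientation-preserving transformations then leaves the index-$2$ subgroup generated by the translations together with $\sigma$ composed with a vertical reflection, which is again a copy of $D_{\infty}$, giving $\Symm_+(\PP_\alpha) \cong D_{\infty}$.

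For cases (ii) and (iii) the packing has $L$ as its unique line, so $\Symm(\PP_\alpha) = \Stab_{\PGL_2(\zz)}(\alpha)$ and $\Symm_+(\PP_\alpha) = \Stab_{\PSL_2(\zz)}(\alpha)$. A matrix $\begin{bmatrix} a & b \\ c & d\end{bmatrix}$ fixes $\alpha$ iff $c\alpha^2 + (d-a)\alpha - b = 0$; if $\alpha$ is neither rational nor quadratic this forces $c = b = 0$ and $a = d$, i.e.\ the identity class, giving case (iii). If $\alpha$ is quadratic, let $Ax^2 + Bx + C$ (with $\gcd(A,B,C) = 1$ and $A > 0$) be its primitive integral polynomial and $D = B^2 - 4AC$ its discriminant. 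Proportionality of $(c, d-a, -b)$ to $(A,B,C)$ together with primitivity shows that the fixing matrices are exactly $\begin{bmatrix} (t - Bu)/2 & -Cu \\ Au & (t + Bu)/2\end{bmatrix}$ with $t,u \in \zz$, and a direct computation gives $\det = (t^2 - Du^2)/4$. Thus fixing matrices of determinant $+1$ correspond to integral solutions of $x^2 - Dy^2 = 4$ and those of determinant $-1$ to solutions of $x^2 - Dy^2 = -4$. Under the multiplication of units $\frac{t + u\sqrt D}{2}$ in the order of discriminant $D$, these matrices modulo $\pm I$ form an infinite cyclic group, so $\Stab_{\PGL_2(\zz)}(\alpha) \cong \zz$, proving the first assertion of (ii). Whether $\Symm_+$ equals $\Symm$ or has index $2$ then reduces to whether the generating unit has norm $+1$ or $-1$: since $x^2 - Dy^2 = 4$ is always solvable (Pell), $\Symm_+$ equals $\Symm$ exactly when $x^2 - Dy^2 = -4$ is \emph{unsolvable}, and is the index-$2$ subgroup otherwise.

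I expect the main obstacle to be the quadratic case (ii): correctly matching the determinant of a fixing matrix with the right-hand side $\pm 4$ of the Pell equation — the factor of $4$ and the use of primitivity of $Ax^2+Bx+C$ are exactly where the discriminant $D$, rather than some other normalization, enters — and verifying that the fundamental unit generates the stabilizer modulo $\pm I$, so that $\Stab_{\PGL_2(\zz)}(\alpha)$ is genuinely infinite cyclic with the stated index behaviour. A secondary point needing care is the line classification in the first step and the verification that $\sigma$ is central in case (i), since these are what separate the three regimes and produce the extra $\zz/2\zz$ factor.
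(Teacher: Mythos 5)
Your proposal follows essentially the same route as the paper's proof: reduce everything to the stabilizer of $\alpha$ under the Möbius action of $\PGL_2(\zz)$ via Theorem \ref{thm1-intro}, dispose of case (iii) by linear independence of $1,\alpha,\alpha^2$ over $\qq$, treat the strip case by elementary geometry, and in the quadratic case parametrize the stabilizer by the matrices $\left[\begin{smallmatrix}(t-Bu)/2 & -Cu\\ Au & (t+Bu)/2\end{smallmatrix}\right]$ with $t^2-Du^2=\pm4$ and invoke the structure of the unit group of the order of discriminant $D$. This is exactly the paper's homomorphism $\Gamma$ together with its surjectivity computation, and your index-$2$ dichotomy according to solvability of $x^2-Dy^2=-4$ is the paper's.

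The one substantive step you pass over is the flat assertion that $\Symm(\PP_\alpha)=\Stab_{\PGL_2(\zz)}(\alpha)$. Theorem \ref{thm1-intro} gives only a \emph{bijection of sets}; to conclude $\Symm(\PP_\alpha)\cong\zz$ you must check that this bijection respects composition, i.e.\ that $\mathbf{A}(\Phi\circ\Phi')=\mathbf{A}(\Phi)\mathbf{A}(\Phi')$. This is true but not automatic, and the paper devotes Proposition \ref{isomstab} to proving it via a curvature computation. Note also that that verification as written relies on irrationality of $\alpha$ (through Corollary \ref{uniquecurv}), so in your case (i) you could not simply quote the same identification even after proving it; the paper instead determines the symmetry group of the strip directly by geometry, which is in substance what your added reflection $\sigma$ accomplishes, so your case (i) survives. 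With the homomorphism property supplied, the rest of your argument — integrality of $(t\pm Bu)/2$, the determinant computation, $\GG\cong\zz\times\{\pm1\}$, and the two Pell alternatives — matches the paper's proof.
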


As one might imagine from the form of Theorem \ref{symmgroup}, there is a striking connection between half-plane Apollonian circle packings and continued fractions which we describe in \S\ref{sec:contfrac}. In particular, we have the following result.

\begin{thm}\label{cfeequiv}
Let $\alpha,\beta > 0$. The packings $\PP_{\alpha}$ and $\PP_{\beta}$ are similar if and only if the continued fraction expansions of $\alpha$ and $\beta$ are eventually equal (that is, become equal when initial segments, of possibly different lengths, are removed from each).
\end{thm}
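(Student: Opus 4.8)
The plan is to reduce the statement to a classical theorem of Serret about numbers equivalent under the M\"obius action of $\GL_2(\zz)$, and then to sketch a proof of that theorem. By Theorem \ref{thm1-intro}, $\PP_{\alpha}$ and $\PP_{\beta}$ are similar if and only if there is a matrix $\begin{bmatrix} a & b\\ c & d\end{bmatrix}\in\PGL_2(\zz)$ with $\frac{a\alpha+b}{c\alpha+d}=\beta$; that is, if and only if $\alpha$ and $\beta$ lie in the same orbit for the M\"obius action of $\GL_2(\zz)$ (equivalently $\PGL_2(\zz)$, since $\pm I$ act trivially) on the positive reals. So it suffices to show that two positive reals are $\GL_2(\zz)$-equivalent in this sense exactly when their continued fraction expansions are eventually equal. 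The rational case is immediate: all positive rationals form a single $\GL_2(\zz)$-orbit and have finite expansions that become equal (both empty) once enough initial terms are deleted, consistent with Theorem \ref{symmgroup}(i), which makes every such $\PP_{\alpha}$ a strip packing and all strip packings mutually similar. So the substantive case is $\alpha\notin\qq$.

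For the ``if'' direction I would record that the shift on continued fractions is realized inside $\GL_2(\zz)$. Writing $\alpha=[a_0;a_1,a_2,\dots]$ with complete quotients $\alpha_k=[a_k;a_{k+1},\dots]$, the relation $\alpha_k=a_k+1/\alpha_{k+1}$ says that $\alpha_k=\begin{bmatrix} a_k & 1\\ 1 & 0\end{bmatrix}\cdot\alpha_{k+1}$ under the M\"obius action, and each such matrix lies in $\GL_2(\zz)$ (determinant $-1$). Hence every complete quotient $\alpha_k$ is $\GL_2(\zz)$-equivalent to $\alpha$. If $\alpha$ and $\beta$ share a common tail $\gamma$, then $\alpha_m=\gamma=\beta_n$ for suitable $m,n$, and chaining these equivalences gives $\alpha\sim\alpha_m=\beta_n\sim\beta$ in $\GL_2(\zz)$; Theorem \ref{thm1-intro} then yields the similarity of the packings.

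For the ``only if'' direction I would use the convergent matrices. With convergents $p_k/q_k$ one has $\alpha=\frac{p_{k-1}\alpha_k+p_{k-2}}{q_{k-1}\alpha_k+q_{k-2}}$, that is $\alpha=A_k\cdot\alpha_k$ where $A_k=\begin{bmatrix} p_{k-1} & p_{k-2}\\ q_{k-1} & q_{k-2}\end{bmatrix}\in\GL_2(\zz)$. Suppose $\beta=M\alpha$ with $M=\begin{bmatrix} a & b\\ c & d\end{bmatrix}\in\GL_2(\zz)$; replacing $M$ by $-M$ if necessary (harmless for the M\"obius action) we may assume $c\alpha+d>0$, whence $a\alpha+b>0$ since $\beta>0$. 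Then $\beta=(MA_k)\cdot\alpha_k$, and the crux is that for all sufficiently large $k$ the matrix $MA_k$ is itself a product of partial-quotient matrices $\begin{bmatrix} e_i & 1\\ 1 & 0\end{bmatrix}$, i.e.\ $\beta=[e_0;e_1,\dots,e_j,\alpha_k]$ with $\alpha_k$ appearing as a complete quotient of $\beta$. Granting this, $\alpha_k=\beta_{j+1}$, so $\alpha$ and $\beta$ have a common tail. Note that no parity constraint appears precisely because $M$ ranges over all of $\GL_2(\zz)$ (determinant $\pm1$) rather than $\PSL_2(\zz)$.

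The main obstacle is the asymptotic claim that $MA_k$ is eventually a convergent (partial-quotient) matrix. Computing
$MA_k=\begin{bmatrix} ap_{k-1}+bq_{k-1} & ap_{k-2}+bq_{k-2}\\ cp_{k-1}+dq_{k-1} & cp_{k-2}+dq_{k-2}\end{bmatrix}$
and using $p_{k-1}/q_{k-1},\,p_{k-2}/q_{k-2}\to\alpha$ together with $a\alpha+b>0$ and $c\alpha+d>0$, all four entries are positive for large $k$; the remaining work is to verify the monotonicity and unimodularity inequalities that characterize convergent matrices, so that the extracted $e_i$ are genuine positive partial quotients. This is the technical heart of Serret's theorem, and I would either carry out the entry-by-entry estimates directly or cite the classical statement (e.g.\ Khinchin, or Hardy and Wright). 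With the irrational case settled this way and the rational case handled by the single-orbit remark, the equivalence follows.
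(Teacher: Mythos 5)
Your proposal is correct, and in fact the paper explicitly acknowledges your route in the opening lines of its own proof: it notes that a ``quick proof'' follows by identifying both conditions with the existence of $\left[\begin{smallmatrix} a & b\\ c & d\end{smallmatrix}\right]\in\PGL_2(\zz)$ sending $\alpha$ to $\beta$ --- Theorem \ref{linfracequiv++} on the packing side, and Serret's theorem (\cite[Theorem 175]{hardy/wright:2008}) on the continued-fraction side --- before deliberately choosing a different argument. Your reduction, the handling of the rational case, the realization of the shift by the matrices $\left[\begin{smallmatrix} a_k & 1\\ 1 & 0\end{smallmatrix}\right]$, and the identification of the ``only if'' direction with the technical core of Serret's theorem are all sound; you also correctly locate the remaining work (showing $MA_k$ eventually satisfies the positivity and monotonicity inequalities characterizing convergent matrices), which is legitimately dischargeable by citation. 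The paper instead proves the theorem geometrically: it introduces a circle replacement algorithm mirroring the continued fraction algorithm (Lemma \ref{lem:cfe}), shows the replacement circles realize the convergents and grow without bound (Lemma \ref{lem:convergents}), and then argues via Lemma \ref{lem:replacementcircles} and an elementary estimate (tangency points of disjoint circles of radii $R,R'$ on a line are at least $2\sqrt{RR'}$ apart) that the two algorithms started from similar configurations must eventually land on a common pair of circles, forcing $\alpha_M=\beta_N$. What your approach buys is brevity and a clean separation of the number theory into a classical citation; what the paper's approach buys is self-containedness and, more importantly, the geometric machinery (the circle replacement algorithm and Lemma \ref{lem:replacementcircles}) that it reuses to prove Theorem \ref{thm:classification}, Lemma \ref{lem:selfsimcfe}, and the odd-period criterion for orientation-reversing self-similarities --- your parenthetical remark about the absence of a parity constraint over $\GL_2(\zz)$ versus $\PSL_2(\zz)$ is exactly the seed of that last result.
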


As a consequence of Theorem \ref{cfeequiv}, we obtain the following classification of the self-similar half-plane packings.

\begin{thm} \label{cfclass}
The similarity classes of self-similar half-plane (non-strip) packings are in a one-to-one correspondence with the finite, non-repeating sequences of positive integers, up to cyclic permutations. Here ``non-repeating" means that the sequence cannot be realized as a concatenation of multiple copies of a shorter sequence. The packing has an orientation-reversing self-similarity if and only if the length of the corresponding sequence is odd.
\end{thm}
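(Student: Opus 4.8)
The plan is to combine the three preceding theorems with Lagrange's theorem, which says that a positive real number is a quadratic irrational if and only if its continued fraction expansion is eventually periodic. First I would identify the objects being classified: by Theorem \ref{symmgroup}, a half-plane packing $\PP_\alpha$ is self-similar and non-strip precisely when $\alpha$ is quadratic over $\qq$ (the rational $\alpha$ give strip packings, which are excluded, and all remaining $\alpha$ give trivial self-similarity groups). By Lagrange's theorem these are exactly the $\alpha$ whose continued fraction expansion is eventually periodic, and each similarity class contains a representative whose expansion is purely periodic, namely the periodic tail $[\overline{c_1, \dots, c_n}]$ taken with minimal period $n$.

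Next I would set up the bijection. By Theorem \ref{cfeequiv}, two such packings are similar exactly when their expansions are eventually equal. For purely periodic expansions I would show that eventual equality holds if and only if the minimal periods are cyclic permutations of one another: deleting an initial segment of length $r$ from the periodic sequence $(c_1, c_2, \dots)$ yields the periodic sequence beginning at $c_{r+1}$, which is the cyclic shift of the block by $r$, so two periodic sequences agree from some point on precisely when their minimal blocks differ by a cyclic shift. The minimality of the period is what the hypothesis ``non-repeating'' encodes: a block that is a concatenation of copies of a shorter block determines the same periodic sequence as that shorter block, so restricting to non-repeating blocks makes the assignment $[\overline{c_1, \dots, c_n}] \mapsto (c_1, \dots, c_n)$ well defined and injective on classes. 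Conversely, every non-repeating finite sequence of positive integers is the minimal period of some purely periodic $\alpha$, which is a quadratic irrational, so the map is onto. This gives the one-to-one correspondence between similarity classes and non-repeating finite sequences of positive integers up to cyclic permutation.

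It remains to pin down the orientation-reversing statement, and here I would use Theorem \ref{thm1-intro} with $\alpha = \beta$: the self-similarities of $\PP_\alpha$ preserving the $x$-axis correspond to elements of $\PGL_2(\zz)$ fixing $\alpha$ under the Möbius action, with the orientation-reversing ones corresponding to determinant $-1$. For $\alpha = [\overline{c_1, \dots, c_n}]$ the ``shift by a full period'' is realized by the matrix
\[ M = \begin{bmatrix} c_1 & 1 \\ 1 & 0 \end{bmatrix} \cdots \begin{bmatrix} c_n & 1 \\ 1 & 0 \end{bmatrix}, \]
which fixes $\alpha$ and has $\det M = (-1)^n$ since each factor has determinant $-1$. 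If $n$ is odd, then $M$ itself is an orientation-reversing self-similarity. For the converse I would use that $M$ generates $\Symm(\PP_\alpha) \cong \zz$: the determinant then defines a homomorphism $\Symm(\PP_\alpha) \to \{\pm 1\}$ sending the generator to $(-1)^n$, so when $n$ is even every self-similarity is orientation-preserving. Hence an orientation-reversing self-similarity exists if and only if $n$ is odd.

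The main obstacle is the claim that $M$ generates the full self-similarity group rather than merely sitting inside it as a power; equivalently, no shift by fewer than $n$ partial quotients can fix $\alpha$. This is exactly the minimality of the period together with the correspondence (developed in \S\ref{sec:contfrac}) between self-similarities of $\PP_\alpha$ and shifts of the continued fraction tail, under which a shift by $k$ fixes $\alpha$ if and only if $n \mid k$. Granting this, the determinant computation is immediate, and I would not need to invoke the Pell-equation criterion of Theorem \ref{symmgroup} directly; the two descriptions nonetheless agree, since $M$ being the fundamental automorph forces the generator of $\Symm(\PP_\alpha)$ to have determinant $(-1)^n$, so a determinant $-1$ automorph — equivalently a solution of $x^2 - Dy^2 = -4$ — exists exactly when $n$ is odd.
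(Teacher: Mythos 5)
Your classification argument for the first part coincides with the paper's proof (Theorem \ref{thm:classification}): both reduce to Theorem \ref{cfeequiv} together with the Lagrange/Galois facts about periodic continued fractions, and the observation that two purely periodic expansions are eventually equal exactly when their minimal (non-repeating) blocks are cyclic shifts of one another. For the parity statement you diverge from the paper. The paper argues geometrically: by Lemma \ref{lem:selfsimcfe} the generator of $\Symm(\PP_{\alpha})$ is the shift by one full period of the circle replacement algorithm, and each occurrence of step (B) --- one per partial quotient --- reverses the left-to-right order of the pair $(X_k,Y_k)$, so the generator reverses orientation if and only if the period length $n$ is odd. You instead pass to $\PGL_2(\zz)$ via Theorem \ref{thm1-intro} and compute $\det M = (-1)^n$ for the period matrix $M = \prod_i \begin{bmatrix} c_i & 1\\ 1 & 0\end{bmatrix}$. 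These are really the same parity count in different clothing (each determinant $-1$ factor corresponds to one step (B)), but your version has the virtue of making the link to the Pell criterion $x^2 - Dy^2 = -4$ of Theorem \ref{symmgroup} explicit.

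The one substantive point you defer is the one the paper spends the most effort on: that $M$ (equivalently, the one-period shift) \emph{generates} $\Symm(\PP_{\alpha}) \cong \zz$ rather than sitting inside it as a proper power of the generator. You correctly flag this as the main obstacle, but ``the correspondence between self-similarities and shifts of the continued fraction tail'' is not a fact you can simply cite from what precedes the theorem --- it is precisely Lemma \ref{lem:selfsimcfe}, whose proof in the paper is substantial and uses Galois's characterization of purely periodic expansions (that $\alpha$ is reduced) together with the explicit Pell-equation parametrization of $\Stab(\alpha)$ from the proof of Theorem \ref{thm}. Alternatively one could import the classical statement that $M$ is the fundamental automorph of the associated quadratic form, which your wording suggests; either way this step needs an actual argument or a precise citation. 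Granting that lemma, your proof is complete and correct.
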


In Figures \ref{fig:selfsim1}-\ref{fig:selfsim12}, at the end of the paper, we show the self-similar half-plane packings corresponding to the sequences $(1)$, $(2)$, $(3)$ and $(1,2)$.

Here is a quick outline. In \S\ref{sec:acp} we give a precise definition of Apollonian circle packings and establish some of their basic properties. The main part of that section is then to describe a labelling system for the circles in a half-plane packing that are tangent to the $x$-axis. In \S\ref{sec:selfsim} we relate those labels to the curvatures of the circles and use this relationship to prove Theorems \ref{thm1-intro} and \ref{symmgroup}. In \S\ref{sec:contfrac} we examine the connection between half-plane packings and continued fractions which we use to prove Theorems \ref{cfeequiv} and \ref{cfclass}.

\subsection*{Acknowledgements}

This project grew out of a VIGRE Research Group on Circle Packings at the University of Georgia. We would like to thank Sa'ar Hersonsky for his leadership and suggestions. Michael Berglund and Chris Pryby were involved in much of the initial work on the project and this paper would not exist without their contributions. Other members of the group, including Jennifer Ellis, provided useful feedback.

\section{Apollonian Circle Packings}\label{sec:acp}

We begin with a precise definition of an Apollonian circle packing.

\begin{defn} \label{apollonian}
For the purposes of this paper, a \textbf{circle} in $\rr^2$ is either a circle or a straight line, together with a choice of one of the components of its complement which we refer to as the \textbf{interior} of the circle. Note that what we refer to as the interior of a circle may be the unbounded component of its complement, and the interior of a straight line is one of the two half-planes it determines.

An \textbf{Apollonian circle packing} is a collection $\PP$ of circles in $\rr^2$ with disjoint interiors such that
\begin{enumerate}
  \item there exists a set of three mutually tangent circles in $\PP$;
  \item if a circle $C$ is tangent to three mutually tangent circles that are in $\PP$, then $C$ is also in $\PP$.
\end{enumerate}
\end{defn}

An Apollonian circle packing can be constructed recursively in the following way.

\begin{defn}\label{recursive}
Let $\PP^{(0)}$ be a set of three mutually tangent circles in $\rr^2$ with disjoint interiors. Given $\PP^{(n)}$, we define $\PP^{(n+1)}$ to be the set of circles in $\rr^2$ consisting of $\PP^{(n)}$ together with any circle that is tangent to three mutually tangent circles in $\PP^{(n)}$. The \textbf{Apollonian circle packing generated by $\PP^{(0)}$} is
\begin{equation*}
\PP := \bigcup_{n = 0}^{\infty} \PP^{(n)}
\end{equation*}
It can be seen by an induction argument that the circles in $\PP^{(n)}$ have disjoint interiors, and it follows that $\PP$ is an Apollonian circle packing in the sense of Definition \ref{apollonian}.
\end{defn}

Note that a theorem of Apollonius says that for three mutually tangent circles in $\rr^2$ with disjoint interiors, there are precisely two other circles tangent to all three. Each of these two circles lies in an \emph{interstice} formed by the original three circles. See Figure \ref{fig:interstices}.

\begin{defn}
Let $A$, $B$, and $C$ be three mutually tangent circles in $\rr^2$ with disjoint interiors. The complement of $A \cup B \cup C$ in $\rr^2$ consists of five components --- three of the components are the interiors of the respective circles, and the other two are called the \textbf{interstices} formed by $A$, $B$, and $C$.
\end{defn}

\begin{figure}[h]
\includegraphics[scale=.8,angle=90]{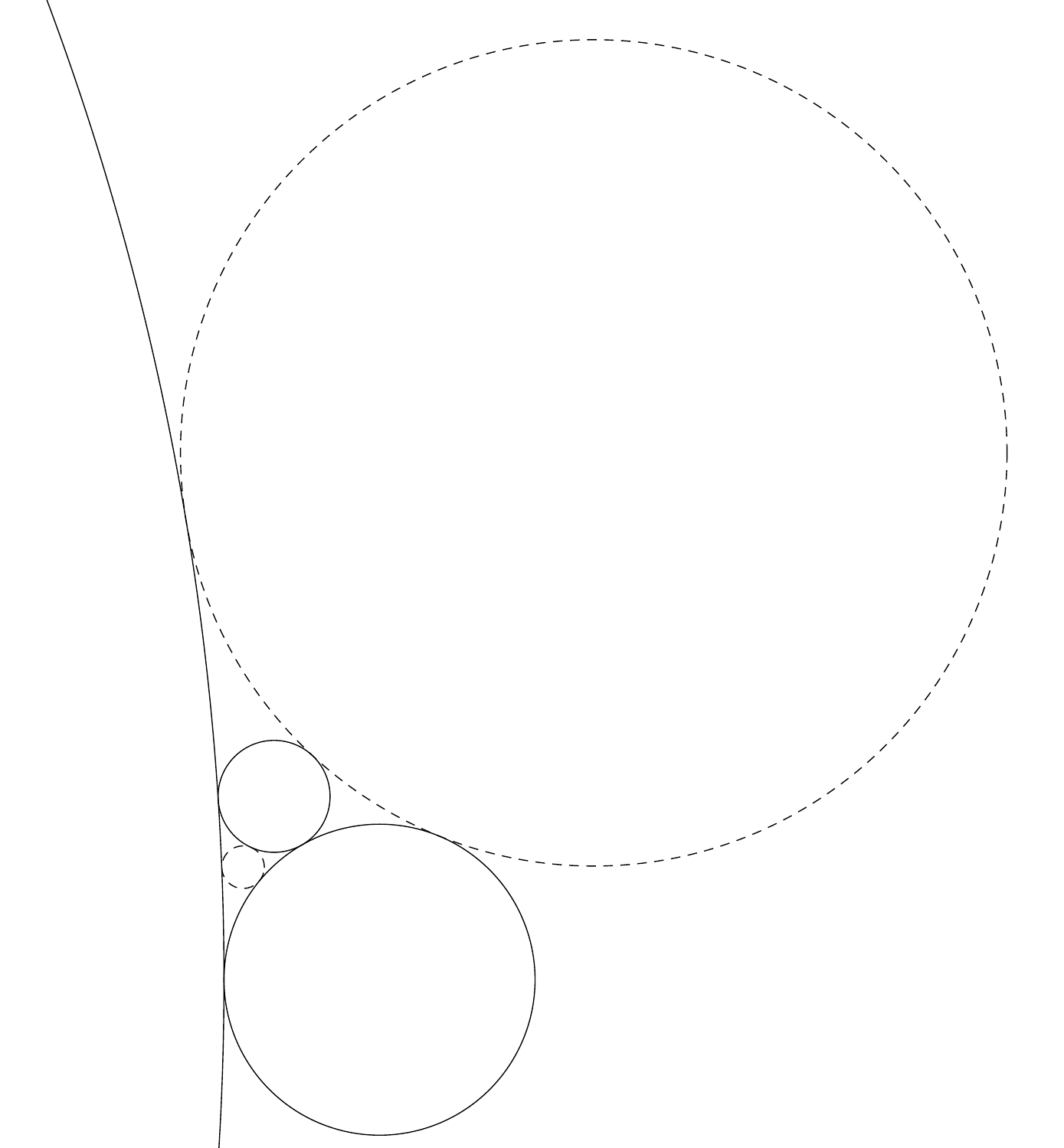}
\caption{The two dashed circles lie in the interstices bounded by the three solid circles.}
\label{fig:interstices}
\end{figure}

\begin{lem}\label{anytriple}
Let $\PP$ be an Apollonian circle packing. Then $\PP$ is generated, in the sense of Definition \ref{recursive}, by any set of three mutually tangent circles in $\PP$.
\end{lem}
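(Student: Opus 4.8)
The plan is to prove the two inclusions $\GG \subseteq \PP$ and $\PP \subseteq \GG$, where $\GG$ denotes the Apollonian packing generated, in the sense of Definition \ref{recursive}, by the chosen triple $T = \{A,B,C\}$ of mutually tangent circles in $\PP$. The inclusion $\GG \subseteq \PP$ is the routine direction. I would prove by induction on $n$ that each stage $\GG^{(n)} \subseteq \PP$. The base case $\GG^{(0)} = T \subseteq \PP$ holds by hypothesis, and for the inductive step any circle added to form $\GG^{(n+1)}$ is tangent to three mutually tangent circles of $\GG^{(n)} \subseteq \PP$, hence lies in $\PP$ by the closure property (ii) of Definition \ref{apollonian}. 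Taking the union over $n$ gives $\GG \subseteq \PP$; in particular the interior of every circle of $\PP$ is disjoint from the interior of every (distinct) circle of $\GG$.

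For the reverse inclusion, fix $D \in \PP$ and locate it inside the recursive construction. If $D \in T$ we are done, so suppose not. First I would show $D$ lies in a single interstice of $T$: since $\mathrm{int}\,D$ is open, connected, and disjoint from $\mathrm{int}\,A \cup \mathrm{int}\,B \cup \mathrm{int}\,C$, and since it cannot meet the curves $A$, $B$, $C$ either (a point of such a curve lying in the open set $\mathrm{int}\,D$ would force $\mathrm{int}\,D$ to meet one of the open interiors), it must be contained in exactly one of the two open interstices cut out by $T$, and the curve $D$ then lies in its closure. Within that interstice I would run the subdivision of Definition \ref{recursive}: let $E$ be the unique circle of $\GG$ tangent to the three bounding circles on that side. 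Either $D = E$, so $D \in \GG$ and we are finished, or $\mathrm{int}\,D$ is disjoint from $\mathrm{int}\,E$ and from the curve $E$, and the same connectivity argument places $\mathrm{int}\,D$ in exactly one of the three smaller interstices that $E$ carves out. Iterating produces a nested sequence of interstices, each bounded by three mutually tangent circles of $\GG$, all containing the fixed nonempty open set $\mathrm{int}\,D$.

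The crux is then to rule out the possibility that this process never terminates. For that I would establish the key geometric fact that the diameters of the interstices arising in the construction tend to $0$; equivalently, that the union of the open interiors of the circles of $\GG$ has dense complement of empty interior. Granting this, a nonterminating process would place the fixed set $\mathrm{int}\,D$, which contains a disk of some positive radius, inside interstices of arbitrarily small diameter, a contradiction. Hence $D = E$ at some finite stage and $\PP \subseteq \GG$, completing the proof.

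I expect this shrinking estimate to be the main obstacle, since it is the one genuinely quantitative ingredient. My approach would be to track curvatures: by the Descartes--Apollonius relation the circle inscribed in an interstice has curvature strictly exceeding that of each of the three circles bounding it, so along the nested sequence the curvatures grow without bound and the bounding circles, hence the interstices, shrink. Two points require extra care and I would flag them explicitly: the interstices of $T$ need not all be bounded (for a half-plane packing one is unbounded), but I would check that after finitely many subdivisions every interstice in the relevant nest is bounded, after which the curvature argument applies; and the case in which $D$ is a straight line must be treated separately, using the convention that parallel lines are tangent at infinity, so that $\mathrm{int}\,D$ is an entire half-plane and therefore cannot be contained in any bounded interstice, forcing the nesting to terminate at once.
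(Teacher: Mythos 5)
Your overall architecture is genuinely different from the paper's. The paper disposes of the hard direction in one line: since the generated packing $\GG$ is itself an Apollonian packing, its residual set (the complement of the union of the open interiors) has Lebesgue measure zero by a cited theorem of Graham et al., so a circle of $\PP\setminus\GG$, whose nonempty open interior would have to sit inside that residual set, cannot exist. You instead try to prove the needed localization fact from scratch by trapping $\mathrm{int}\,D$ in a nest of shrinking interstices. Your easy inclusion $\GG\subseteq\PP$ and the connectivity argument placing $\mathrm{int}\,D$ in a single interstice at each stage are both fine, and your reduction is exactly the right one: everything comes down to showing a fixed disk of positive radius cannot lie in every interstice of an infinite nest. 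What your approach buys is self-containedness (no appeal to the residual-set theorem); what it costs is that you must actually prove the quantitative shrinking statement, which is precisely the content the paper outsources.

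That is where the gap is, and your proposed curvature argument does not close it. First, the claim that the inscribed circle's curvature strictly exceeds that of each bounding circle fails in degenerate configurations: for an interstice bounded by two parallel lines and a circle of curvature $\kappa$, Descartes gives the filling circle curvature $\kappa$ again, with no growth (this is the strip-packing situation, which your "eventually bounded" reduction must also handle; note that the nest can remain unbounded for arbitrarily many steps, and ruling out that it does so forever requires an argument that the unbounded interstices recede to infinity while $\mathrm{int}\,D$ stays put). Second, even in the bounded regime, "curvatures strictly increase" does not by itself give "curvatures increase without bound," and neither statement directly bounds the \emph{diameter} of the interstice, since a cusp region between two large tangent circles can have diameter much larger than the radius of any circle inscribed in it. A cleaner way to finish, consistent with your setup: once the nest consists of bounded interstices, the inscribed Soddy circles $E_1,E_2,\dots$ are distinct circles of $\GG$ with disjoint interiors all contained in the first bounded interstice, so their areas are summable and their radii tend to $0$; since the Soddy circle of a curvilinear triangle is a maximal inscribed disk, no disk of fixed positive radius can be contained in interstices whose inscribed radii tend to $0$. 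With that substitution (or with the paper's citation) your proof goes through; as written, the key estimate is asserted rather than proved, and the mechanism you propose for it is not correct in all the cases that arise.
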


\begin{cor}\label{samepack}
If $\PP$ and $\PP'$ are two Apollonian packings with a common triple of mutually tangent circles, then $\PP = \PP'$. \qed
\end{cor}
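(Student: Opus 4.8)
The plan is to deduce this immediately from Lemma \ref{anytriple}, using the observation that the recursive construction of Definition \ref{recursive} produces a uniquely determined packing from its generating triple. Let $T$ denote the common triple of mutually tangent circles, so that $T \subseteq \PP$ and $T \subseteq \PP'$.

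First I would apply Lemma \ref{anytriple} to each packing separately. Since $T$ is a set of three mutually tangent circles lying in $\PP$, the lemma tells us that $\PP$ is generated by $T$ in the sense of Definition \ref{recursive}; that is, $\PP = \bigcup_{n=0}^{\infty} \PP^{(n)}$ where $\PP^{(0)} = T$. The identical argument applied to $\PP'$ shows that $\PP'$ is also generated by $T$, so $\PP' = \bigcup_{n=0}^{\infty} \PP'^{(n)}$ with $\PP'^{(0)} = T$ as well.

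The key remaining point is that the generating procedure is deterministic: once the starting triple is fixed, each subsequent stage is completely determined by its predecessor, since $\PP^{(n+1)}$ consists of $\PP^{(n)}$ together with every circle tangent to some triple of mutually tangent circles already present in $\PP^{(n)}$. Hence the sequence $(\PP^{(n)})_{n \geq 0}$, and therefore its union, is a function of $T$ alone. Because $\PP^{(0)} = \PP'^{(0)} = T$, an immediate induction gives $\PP^{(n)} = \PP'^{(n)}$ for every $n$, and taking unions yields $\PP = \PP'$.

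There is no real obstacle here; the substantive work has already been carried out in establishing Lemma \ref{anytriple}. The only thing worth making explicit is the uniqueness of the recursive construction, which I would state once rather than re-derive, since it is what lets us pass from ``both packings are generated by $T$'' to ``both packings equal the same set.''
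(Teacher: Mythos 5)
Your proof is correct and is exactly the argument the paper intends: the corollary is stated with an immediate \qed as a direct consequence of Lemma \ref{anytriple}, namely that both packings are generated by the common triple and the recursive construction of Definition \ref{recursive} is deterministic. Your only addition is to spell out that determinism explicitly, which is a reasonable (if unnecessary) elaboration of the same route.
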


\begin{proof}[Proof of Lemma \ref{anytriple}]
Since $\PP$ certainly contains the packing generated by any set $\PP^{(0)}$ of three mutually tangent circles, it is sufficient to show that there is no room for any other circles. In particular, this will be true if the complement of the set of interiors of circles in $\PP$ (called the \emph{residual set of $\PP$}) has Lebesgue measure zero. A proof of this fact may be found in \cite[Theorem 4.2]{graham:2005}.
\end{proof}

For us, the point of the recursive construction of Apollonian circle packings is that some of our arguments proceed by induction on the stage at which the circles are created in this process. We therefore make the following definition.

\begin{defn}
Fix a generating triple $\PP^{(0)}$ for the packing $\PP$. The \textbf{generation} of a circle $C \in \PP$ (with respect to $\PP^{(0)}$), denoted by $\gen(C)$, is the unique $n \in \zz_{\ge 0}$ such that $C \in \PP^{(n)} \setminus \PP^{(n-1)}$.
\end{defn}

We now narrow our focus to half-plane packings. Let $\PP$ be a half-plane packing, that is, a packing that contains at least one line $L$. We assume that $L$ coincides with the $x$-axis and that the remaining circles in $\PP$ are in the upper-half plane. (Any half-plane packing is similar to one that satisfies this condition.)

\begin{defn}
Most of our analysis of half-plane packings can be done be focusing on the circles in $\PP$ that are tangent to the line $L$. We define
\[ \PP_L := \{C \in \PP \ | \ C \mbox{ is tangent to } L \}. \]
The `mutually disjoint interiors' requirement of circle packings ensures that no two circles in $\PP_L$ may be tangent to $L$ at the same point. This property allows us to define a total ordering on the set $\PP_L$. We say that \emph{$C$ is to the left of $C'$}, or $C \prec C'$, if the $x$-coordinate of the point of tangency between $C$ and $L$ is \emph{less than} the $x$-coordinate of the point of tangency between $C'$ and $L$. In the case that $\PP$ is a strip packing, with $L'$ the line in $\PP$ which is parallel to $L$, we consider the $x$-coordinate of the point of tangency between $L'$ and $L$ to be $-\infty$; in other words, $L' \prec C$ for all $C \in \PP_L$ with $C \ne L'$.
\end{defn}

\begin{defn}\label{defn:pairinterstice}
Let $X$ and $Y$ be two tangent circles in $\PP_L$, neither of which is a line. Then $\{X,Y,L\}$ is a triple of mutually tangent circles in $\rr^2$ and therefore determines two interstices in the plane. One interstice is bounded, and the other is unbounded; we refer to these as the \textbf{bounded interstice for $X$ and $Y$} and the \textbf{unbounded interstice for $X$ and $Y$} respectively. We say that the circle $C$ \textbf{fills the bounded} (resp. \textbf{unbounded}) \textbf{interstice for $X$ and $Y$} if $C$ is the unique circle in the bounded (resp. unbounded) interstice for $X$ and $Y$ which is tangent to $X$, $Y$, and $L$ (see Figure \ref{fig:interstices2}). Note that, by Definition~\ref{apollonian}, $C$ necessarily lies in $\PP$ and hence also $\PP_L$.
\end{defn}

\begin{rem}
No circle in the bounded interstice for $X$ and $Y$ can be tangent to a circle in the unbounded interstice for $X$ and $Y$.
\end{rem}

\begin{figure}[h]
	\begin{overpic}[scale = .7]{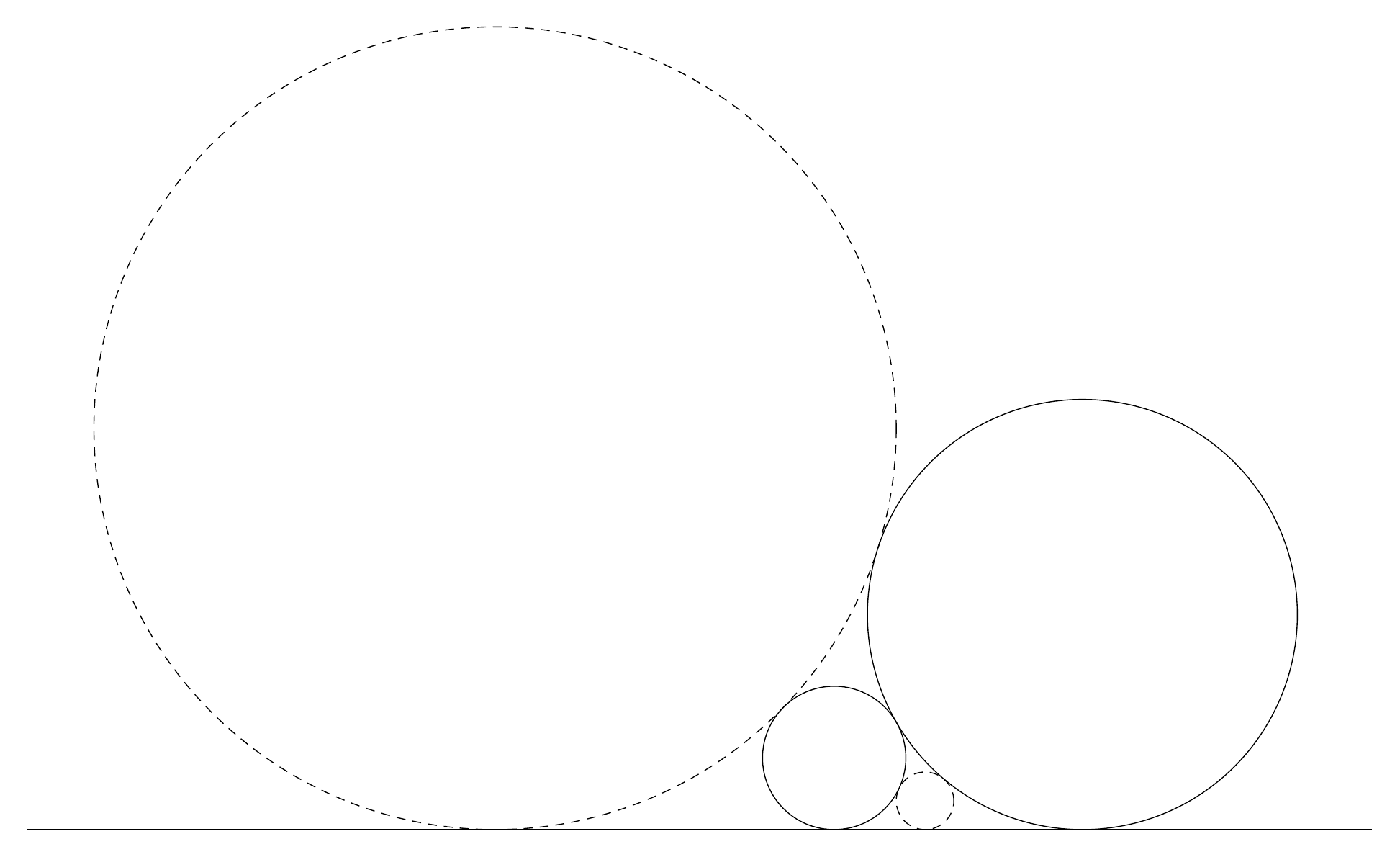}
	\put(225,21){\huge $X$}
	\put(295,54){\Huge $Y$}
	\put(370,10){\huge $L$}
	\end{overpic}
	\caption{The larger dashed circle fills the unbounded interstice for $X$ and $Y$, and the smaller dashed circle fills the bounded interstice.}
  \label{fig:interstices2}
\end{figure}

By Lemma \ref{anytriple} we can view the packing $\PP$ as generated by the triple $\{X,Y,L\}$ in the sense of Definition \ref{recursive} for any pair of tangent circles $X,Y \in \PP_{L}$. For the remainder of this section, we fix a choice of $X$ and $Y$ and assume that $X \prec Y$, that is, $X$ is to the left of $Y$. We also assume that $X$ and $Y$ are actual circles, i.e., neither is a line.

\begin{defn} \label{def:PL+}
It is convenient to divide up the circles in $\PP_L$ according to which interstice they are contained in. We define
\[ \PP_L^+ = \{C \in \PP_L \ | \ X \preceq C \preceq Y\} \]
and
\[ \PP_L^- = \{C \in \PP_L \ | \ C \preceq X \mbox{ or } Y \preceq C\}. \]

Geometrically, $\PP_L^+$ consists of $X$, $Y$, and those circles in $\PP_L$ that are in the bounded interstice for $X$ and $Y$, while $\PP_L^-$ consists of $X$, $Y$, and those circles in $\PP_L$ that are in the unbounded interstice for $X$ and $Y$. Note that $\PP_L^+ \cup \PP_L^- = \PP_L$ and $\PP_L^+ \cap \PP_L^- = \{X,Y\}$.
\end{defn}

\begin{lem} \label{bounded-interstice}
For each circle $C \in \PP_L^+$, $C \not \in \{X,Y\}$, there exist circles $A$ and $B$ in $\PP_L^+$ of generation strictly less than that of $C$ such that $C$ fills the bounded interstice for $A$ and $B$. (Recall that the generation of a circle in a packing $\PP$ depends on a choice of generating triple; in this case, $\PP^{(0)} = \{X,Y,L\}$.)
\end{lem}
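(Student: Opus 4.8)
The plan is to argue by strong induction on the generation $n := \gen(C)$ (with respect to the generating triple $\{X,Y,L\}$), proving along the way a structural description of $\PP_L^+$: listed in the order $\prec$, the circles of $\PP_L^+$ of generation at most $n$ form a finite chain $X = D_0 \prec D_1 \prec \dots \prec D_m = Y$ in which consecutive circles are tangent, and the circles of generation exactly $n$ (for $n \ge 1$) are precisely those filling the bounded interstices of the consecutive pairs of the chain at level $n-1$. The desired statement is the ``insertion'' half of this claim, so it suffices to carry out the inductive step; the base case $n=1$ is immediate, since the only mutually tangent triple in $\PP^{(0)} = \{X,Y,L\}$ is $\{X,Y,L\}$ itself, and its bounded interstice for $X$ and $Y$ is filled by the unique generation-$1$ circle of $\PP_L^+$.

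For the inductive step I would rely on two geometric ingredients. The first identifies the role of $L$: any circle tangent to $L$ lies in the closed upper half-plane and meets the $x$-axis only at its lowest point $t$, where its tangent line is horizontal. If such a circle fills an interstice of a mutually tangent triple $T \subseteq \PP^{(n-1)}$ (as the recursive construction of Definition \ref{recursive} and the theorem of Apollonius guarantee), then $L \in T$. Indeed, were all three members of $T$ genuine circles, they would lie in the closed upper half-plane; the unbounded interstice is then excluded, because its filler encloses $T$ and so has interior meeting the open lower half-plane, namely the interior of $L$, violating disjointness; and in the bounded interstice---a curvilinear triangle contained in the \emph{open} upper half-plane---the boundary point $t$ would have to be a point of tangency of $C$ with some member of $T$ (necessarily distinct from $C$), which would then be tangent to $L$ at $t$ as well, contradicting the fact that no two circles of $\PP_L$ touch $L$ at a common point. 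The second ingredient is a confinement principle: a circle tangent to $L$ at an interior point of the segment between two tangent circles of $\PP_L$ is trapped, by disjointness of interiors, inside their bounded interstice.

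Combining these, let $C \in \PP_L^+$ have generation $n \ge 2$ and let $t_C$ be its point of tangency with $L$; since $C \ne X,Y$, this point lies strictly between the tangency points of $X$ and $Y$. By the inductive hypothesis, $t_C$ falls in some gap between consecutive circles $A \prec B$ of $\PP_L^+$ of generation at most $n-1$, which are tangent to one another; the confinement principle places $C$ in the bounded interstice for $A$ and $B$. Since $A$ and $B$ are consecutive at level $n-1$, that interstice contains no circle of $\PP_L^+$ of generation $\le n-1$, so its filler---tangent to $A$, $B$, and $L$ and subdividing the interstice---is the unique generation-$n$ circle of $\PP_L^+$ tangent to $L$ inside it; hence $C$ is exactly this filler. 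Thus $C$ fills the bounded interstice for $A, B \in \PP_L^+$, both of generation strictly less than $n$, as required.

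I expect the main obstacle to be the structural induction itself rather than any single computation---specifically, verifying that the generation-$n$ circles of $\PP_L^+$ are \emph{exactly} the bounded-interstice fillers of consecutive level-$(n-1)$ pairs. The clean point is that tangency to $L$ forces $L$ into the generating triple; the delicate point is excluding the \emph{other} Apollonius circle of $\{A,B,L\}$ (the unbounded-interstice filler), which is a genuine circle of the packing but, under the inductive description, coincides with an already-present circle of smaller generation rather than a new circle of generation $n$. It is precisely here that the Stern--Brocot-type bookkeeping of $\PP_L^+$ is indispensable.
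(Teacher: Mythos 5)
Your overall architecture --- a strong induction establishing that the circles of $\PP_L^+$ of generation at most $n$ form a Stern--Brocot-type chain of consecutive tangent circles, combined with a confinement argument locating $C$ by its point of tangency with $L$ --- is a viable route and genuinely different from the paper's. (The paper avoids the chain entirely: it notes that exactly one of the two non-line circles $A,B$ of earlier generation tangent to $C$, say $B$, has generation $\gen(C)-1$, applies the induction hypothesis to $B$ to produce a third circle $D$ of still smaller generation, observes that $D$ is precisely the filler of the \emph{unbounded} interstice for $A$ and $B$, and concludes $C \neq D$ by comparing generations.) However, your first ``geometric ingredient'' contains a genuine error. You exclude the possibility that $C$ fills the unbounded interstice of a triple $T$ of three genuine circles on the grounds that ``its filler encloses $T$ and so has interior meeting the open lower half-plane.'' The outer Apollonius circle of three mutually tangent circles with disjoint bounded interiors does \emph{not} always enclose them: by Descartes' theorem its curvature is $k_1+k_2+k_3 - 2\sqrt{k_1k_2+k_1k_3+k_2k_3}$, which is positive for, e.g., curvatures $1,1,100$, in which case the unbounded-interstice filler is a small circle with bounded interior sitting in a pocket of the unbounded interstice. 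The failure even occurs inside the packing: if $Z$ fills the bounded interstice for $X$ and $Y$, and $W$ fills the bounded interstice of the triple $\{X,Y,Z\}$, then $Y$ itself is the unbounded-interstice filler of the triple $\{X,Z,W\}$ --- a circle tangent to $L$ whose interior certainly does not meet the lower half-plane. So the contradiction you claim evaporates in exactly the case you need to rule out.

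The conclusion of your ingredient (that $L$ belongs to the creating triple of any $C \in \PP_L$ with $\gen(C) \geq 1$) is nevertheless true, but for a combinatorial rather than geometric reason, and this is how the paper handles it: a circle created at generation $n$ is tangent to \emph{exactly three} circles of $\PP^{(n-1)}$, namely the members of its creating triple; since $L$ lies in $\PP^{(0)}$ and is tangent to $C$, it must be one of the three. If you replace your unbounded-interstice argument by this observation, the rest of your proof --- the chain structure at each level, the confinement of $C$ to the bounded interstice of the consecutive pair $A \prec B$ straddling its tangency point, and the identification of $C$ with the unique filler there --- does go through, at the cost of carrying a substantially heavier inductive statement than the paper needs.
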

\begin{proof}

We work by induction on the generation of $C$. If $C$ is generation $1$, then it must be \emph{the} circle that fills the bounded interstice between $X$ and $Y$, so satisfies the lemma. Now suppose that $\gen(C) \geq 2$. Thinking about when the circle $C$ is added to the packing in the recursive construction of Definition \ref{recursive}, we see that there are exactly three mutually tangent circles of generation less than $C$ that are tangent to $C$. One of these circles must be the line $L$, so let $A$ and $B$ be the other two. It follows from the remark after Definition \ref{defn:pairinterstice} that $A,B \in \PP_L^+$. Now $C$ fills one of the interstices formed by $A$ and $B$. We need to show that it fills the bounded interstice.

Now exactly one of $A$ and $B$ must be of generation exactly one less than $C$. (To see this, we recall the procedure for recursively building an Apollonian packing. This procedure implies that as soon as $A$ and $B$ have been added, the circle $C$ will be added in the very next generation. On the other hand, no two circles of the same generation are tangent since they fill different interstices.) Suppose this is $B$, so that we have $\gen(C) > \gen(B) \ge 1$. By the induction hypothesis, $B$ fills the bounded interstice formed by two other circles of generation less than it. One of those must be $A$ and let the other be $D$. But now we see that $D$ fills the unbounded interstice for $A$ and $B$. Since $\gen(D) < \gen(B) < \gen(C)$, we cannot have $D = C$. It follows then that $C$ must fill the bounded interstice for $A$ and $B$.
\end{proof}

Our main tool for keeping track of the circles in a half-plane packing $\PP$ is a labelling for each circle in $\PP_L$ by a pair of integers $(a,b)$. The remainder of this section is devoted to the construction and properties of this labelling. In \S3 we relate this labelling to the curvatures of the circles in $\PP$ and use it to deduce information about similarities between different packings.

\begin{defn} \label{labelling}
We define a labelling function $\vec{x} = (x,y): \PP_L \to \mathbb{Z}^2$. We define the labelling recursively starting with $\vec{x}(X) = (1,0)$ and $\vec{x}(Y) = (0,1)$. For the remaining circles in $\PP_L$, the label is determined by the following rule:
\begin{center}
  If $C$ is the circle that fills the bounded interstice for $A$ and $B$, then
  \begin{equation}\label{eqn:labeldefn}
	\vec{x}(C) = \vec{x}(A) + \vec{x}(B).
  \end{equation}
\end{center}
At each stage of the construction of the packing from its generators, $X$, $Y$, and $L$, a new circle in $\PP_L$ fills either the bounded or unbounded interstice for a pair of circles already present. The equation above determines a label for each such new circle --- see Figure \ref{fig:labels}.
\end{defn}

\begin{figure}[h]
	\begin{overpic}[scale = .9]{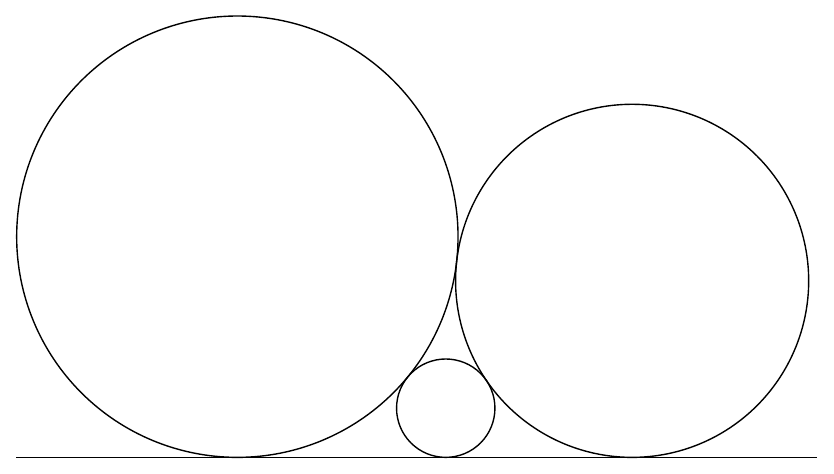}
	\put(41,55){\huge $(a,b)$}
	\put(144,44){\huge $(c,d)$}
	\put(86,-12){$(a+c,b+d)$}
	\put(111,1){\huge $\uparrow$}
	\end{overpic}
	
\vspace{5mm}
	
	\caption{The relationship satisfied by the labels.}
  \label{fig:labels}
\end{figure}

The main result of this section, Proposition \ref{uniquecircle}, tells us that the labelling function $\vec{x}$ is one-to-one and that for each pair $(a,b)$ of coprime integers, exactly one of $(a,b)$ and $(-a,-b)$ is in the image of $\vec{x}$. It also gives us a necessary and sufficient condition on the labels for two circles in $\PP_L$ to be tangent. It is convenient to start with this condition, which is stated in terms of the matrix formed by the labels of the two circles.

\begin{lem}\label{det1}
Let $A$ and $B$ be a pair of tangent circles in $\PP_L$ such that $A \prec B$. Then
\[ \begin{vmatrix}
x(A) & y(A)\\
x(B) & y(B)
\end{vmatrix}
= 1. \]
\end{lem}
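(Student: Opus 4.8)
The plan is to induct on the larger of the two generations of the circles in the tangent pair, exploiting the recursive structure of $\PP_L$ together with the additive labelling rule. Since any two distinct circles of the same generation fill different interstices and hence are not tangent (as observed in the proof of Lemma~\ref{bounded-interstice}), the only tangent pair whose members share a generation is the base pair $\{X,Y\}$; there the determinant is $\begin{vmatrix} 1 & 0\\ 0 & 1\end{vmatrix}=1$ with $X\prec Y$, which is the base case. For any other tangent pair, exactly one circle, say $C$, has strictly larger generation. When $C$ was added it filled an interstice, bounded or unbounded, of two mutually tangent circles $P\prec Q$ of strictly smaller generation, both necessarily in $\PP_L$, and among earlier circles $C$ is tangent only to $P$, $Q$, and $L$. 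Hence the other member of our pair is $P$ or $Q$, and the inductive hypothesis applied to the tangent pair $\{P,Q\}$ gives $x(P)y(Q)-y(P)x(Q)=1$.

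First I would dispatch the bounded interstice, which is the clean case. Here $\vec{x}(C)=\vec{x}(P)+\vec{x}(Q)$ and the tangency point of $C$ lies between those of $P$ and $Q$, so $P\prec C\prec Q$, and the two new tangent pairs are $(P,C)$ and $(C,Q)$. Each determinant collapses to the inductive one: expanding $x(P)y(C)-y(P)x(C)$ with $\vec{x}(C)=\vec{x}(P)+\vec{x}(Q)$ cancels the $\vec{x}(P)$-terms and leaves exactly $x(P)y(Q)-y(P)x(Q)=1$, and the computation for $(C,Q)$ is identical. The antisymmetry and bilinearity of the $2\times2$ determinant do all the work, and the position $P\prec C\prec Q$ is precisely what forces the value $+1$ rather than $-1$ under the convention $A\prec B$.

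The main obstacle is the unbounded interstice, since Definition~\ref{labelling} states the rule only for bounded interstices. I would first pin down the label of a circle $C$ filling the unbounded interstice of $P\prec Q$. From the geometry of three mutually tangent circles resting on $L$ (two circles tangent to the line are tangent to each other exactly when the distance between their points of tangency with $L$ is twice the geometric mean of their radii), one checks that such a $C$ lies entirely to one side of $\{P,Q\}$, and its label is the \emph{difference} $\vec{x}(Q)-\vec{x}(P)$ when $Q\prec C$ and $\vec{x}(P)-\vec{x}(Q)$ when $C\prec P$, with the sign fixed by the normalization recorded in Proposition~\ref{uniquecircle}. Granting this, the determinant identity is again immediate: for example with $Q\prec C$ and $\vec{x}(C)=\vec{x}(Q)-\vec{x}(P)$, expanding $x(P)y(C)-y(P)x(C)$ and $x(Q)y(C)-y(Q)x(C)$ each reduces after cancellation to $x(P)y(Q)-y(P)x(Q)=1$, and the case $C\prec P$ is symmetric.

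The delicate points are thus bookkeeping rather than conceptual: establishing that every tangent pair other than $\{X,Y\}$ has a unique higher-generation circle whose two circle-parents form a tangent pair of lower generation (this is exactly the parent structure extracted in the proof of Lemma~\ref{bounded-interstice}), and fixing both the label and the $\prec$-position of an unbounded filler so that every determinant evaluates to $+1$ under the ordering $A\prec B$. Once these are settled, the inductive step is a single determinant cancellation in each of the two cases.
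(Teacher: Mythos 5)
Your proposal is correct and is essentially the paper's own argument: induction on $\max\{\gen(A),\gen(B)\}$, the observation that two circles of the same positive generation fill disjoint interstices and so cannot be tangent, identification of the lower-generation ``parents'' of the new circle, and a determinant row-operation in each configuration (the paper's six cases are exactly your bounded/unbounded split crossed with which member of the pair plays the role of the new circle). One correction: do not invoke Proposition~\ref{uniquecircle} to fix the sign of the unbounded filler's label --- that proposition is proved \emph{after} and \emph{using} Lemma~\ref{det1}, so the reference would be circular; it is also unnecessary, since Definition~\ref{labelling} already forces $\vec{x}(C)=\vec{x}(Q)-\vec{x}(P)$ (resp.\ $\vec{x}(P)-\vec{x}(Q)$) because in the new mutually tangent triple on $L$ the middle circle is the bounded filler of the outer two and hence carries the sum of their labels.
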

\begin{proof}
The proof will be by induction on $\gen\{A,B\} := \max\{\gen(A),\gen(B)\}$. The base case is immediate: the generation zero circles form the pair $\{X,Y\}$, which are labeled $(1,0)$ and $(0,1)$ respectively. The corresponding matrix is the identity, which has determinant 1.

Now suppose $\gen\{A,B\} = n \ge 1$. First, observe that we cannot have $\gen(A) = \gen(B) = n$: if $\gen(A) = \gen(B) = n \ge 1$, then $A$ and $B$ were constructed to fill two \emph{disjoint} interstices in $\PP^{(n-1)}$ and cannot therefore be tangent. Hence $\{A,B\}$ contains a unique circle of generation $n$. Furthermore, because a generation $n$ circle is constructed to fill a single interstice in $\PP^{(n-1)}$, it is necessarily tangent to exactly three circles of generation strictly less than $n$. Therefore the circle of generation $n$ (either $A$ or $B$) is tangent to $L$ (generation zero), the circle in $\{A,B\}$ of smaller generation, and a third circle $C$ of generation strictly less than $n$.

There are three possibilities for the position of $C$ relative to $A$ and $B$: $C$ can be to the left of both, to the right of both, or between the two. Moreover, the generation $n$ circle can be either $A$ or $B$, so there are a total of six cases to consider. We only give the proof in two cases --- the other four are nearly identical. To prove them, we use the fact that the matrix row operations of row addition/subtraction are determinant-preserving and that switching two rows switches the sign of the determinant. In each case, the final equality holds by the induction hypothesis.

\textbf{Case 1:} Suppose $C \prec A \prec B$ and $\gen(B) = n$. Then $\vec{x}(A) = \vec{x}(C) + \vec{x}(B)$, and
\begin{equation*}
\begin{vmatrix}
\vec{x}(A) \\
\vec{x}(B)
\end{vmatrix}
=
\begin{vmatrix}
\vec{x}(A)\\
\vec{x}(B)-\vec{x}(A)
\end{vmatrix}
=
\begin{vmatrix}
\vec{x}(A) \\
-\vec{x}(C)
\end{vmatrix}
= -
\begin{vmatrix}
-\vec{x}(C)\\
\vec{x}(A)
\end{vmatrix}
=
\begin{vmatrix}
\vec{x}(C)\\
\vec{x}(A)
\end{vmatrix}
= 1.
\end{equation*}

\textbf{Case 2:} Suppose $A \prec C \prec B$ and $\gen(A) = n$. Then $\vec{x}(C) = \vec{x}(A) + \vec{x}(B)$, and
\begin{equation*}
\begin{vmatrix}
\vec{x}(A) \\
\vec{x}(B)
\end{vmatrix}
=
\begin{vmatrix}
\vec{x}(A)+\vec{x}(B) \\
\vec{x}(B)
\end{vmatrix}
=
\begin{vmatrix}
\vec{x}(C) \\
\vec{x}(B)
\end{vmatrix}
= 1.
\end{equation*}
\end{proof}

\begin{cor}\label{gcd1}
For any circle $C \in \PP_L$, $\gcd(x(C),y(C)) = 1$. \qed
\end{cor}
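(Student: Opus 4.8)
The plan is to deduce the corollary directly from Lemma \ref{det1}. The essential observation is that every circle $C \in \PP_L$ is tangent to at least one other circle in $\PP_L$. For the generating circles $X$ and $Y$ this is immediate, since they are tangent to each other. For any other circle $C$, Definition \ref{labelling} (together with the recursive construction of Definition \ref{recursive}) tells us that $C$ arises by filling the bounded or unbounded interstice for some pair $A,B$ of circles already present; in particular $C$ is tangent to both $A$ and $B$, each of which lies in $\PP_L$. Thus no circle of $\PP_L$ is isolated.

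Having fixed such a tangent neighbor $B$ of $C$, I would invoke Lemma \ref{det1}. Listing the two tangent circles with the one further to the left first, Lemma \ref{det1} gives that the determinant formed by their labels equals $1$. Concretely, whichever of $C \prec B$ or $B \prec C$ holds, expanding the determinant yields
\[ x(C)\,y(B) - y(C)\,x(B) = \pm 1, \]
where the sign depends only on the left--right ordering; in either case the value is a unit in $\zz$.

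Finally, set $g = \gcd(x(C),y(C))$. Since $g$ divides both $x(C)$ and $y(C)$, it divides the integer combination $x(C)\,y(B) - y(C)\,x(B) = \pm 1$. Hence $g \mid 1$, which forces $g = 1$, as claimed.

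The only step that requires any thought is the first one, namely confirming that each circle of $\PP_L$ has a tangent partner inside $\PP_L$; once that is in hand, the divisibility argument is entirely routine. Since the existence of a tangent neighbor is built into the recursive definition of the packing and of the labelling, I do not expect a genuine obstacle here — this is why the statement can reasonably be recorded as an immediate corollary.
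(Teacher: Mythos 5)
Your proof is correct and matches the paper's (implicit) argument: the corollary is stated with an immediate \qed precisely because $\gcd(x(C),y(C))$ divides the determinant $x(C)y(B)-y(C)x(B)=\pm 1$ from Lemma \ref{det1} for any tangent neighbor $B$ of $C$ in $\PP_L$, and such a neighbor always exists by the recursive construction. Your write-up simply makes this explicit.
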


We now begin the proof that our labelling function $\vec{x}$ is one-to-one. We do this first for those circles in the bounded interstice for $X$ and $Y$.

\begin{lem}\label{boundedpositive}
For $C \in \PP_L^+$ we have $x(C),y(C) \geq 0$ with equality only if either $C = X$ or $C = Y$.
\end{lem}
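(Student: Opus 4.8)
The plan is to prove the statement by strong induction on $\gen(C)$, with generation taken relative to the generating triple $\{X,Y,L\}$. The essential device is to \emph{strengthen} the inductive hypothesis: rather than merely asserting $x(C),y(C)\ge 0$ with equality only at $X$ and $Y$, I will prove the sharper statement that for every $C\in\PP_L^+$ with $C\notin\{X,Y\}$ one has \emph{both} $x(C)>0$ \emph{and} $y(C)>0$. Together with the initial labels $\vec{x}(X)=(1,0)$ and $\vec{x}(Y)=(0,1)$, this sharper claim is exactly equivalent to the lemma, and it is its strictness that makes the induction close.

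For the base case, the only generation-zero circles in $\PP_L^+$ are $X$ and $Y$, whose labels $(1,0)$ and $(0,1)$ plainly have nonnegative entries; the strengthened assertion about circles other than $X,Y$ is vacuous at this stage. For the inductive step, take $C\in\PP_L^+$ with $C\notin\{X,Y\}$ and $\gen(C)=n\ge 1$. By Lemma \ref{bounded-interstice} there exist \emph{distinct} circles $A,B\in\PP_L^+$, each of generation strictly less than $n$, such that $C$ fills the bounded interstice for $A$ and $B$; hence by \eqref{eqn:labeldefn} we have $\vec{x}(C)=\vec{x}(A)+\vec{x}(B)$. Applying the inductive hypothesis to $A$ and $B$ gives $x(A),x(B),y(A),y(B)\ge 0$, whence $x(C),y(C)\ge 0$ at once.

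It then remains to upgrade these inequalities to strict ones. Suppose, for contradiction, that $x(C)=0$. Since $x(C)=x(A)+x(B)$ is a sum of two nonnegative integers, this forces $x(A)=x(B)=0$. But the inductive hypothesis says that any circle of generation $<n$ with vanishing $x$-coordinate must be $Y$ (indeed $x(X)=1$, and every other circle of smaller generation has strictly positive $x$-coordinate), so $A=B=Y$, contradicting that $A$ and $B$ are distinct. Therefore $x(C)>0$, and the symmetric argument using $y(X)=0$ and $y(Y)=1$ yields $y(C)>0$, completing the induction. The one place requiring care — and exactly where the argument would break down under a non-strict hypothesis — is this final step: strictness is precisely what lets $x(A)=x(B)=0$ pin down $A=B=Y$ and thereby collide with $A\ne B$. (One could instead try to invoke Corollary \ref{gcd1} to conclude $y(C)=1$ from $x(C)=0$, but identifying the resulting circle as $Y$ would presuppose injectivity of $\vec{x}$, which is not yet available; the strengthened induction avoids this and is self-contained.)
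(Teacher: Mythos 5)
Your proof is correct and follows the same route as the paper: induction on generation, using Lemma \ref{bounded-interstice} to write $\vec{x}(C)=\vec{x}(A)+\vec{x}(B)$ for circles $A,B$ of strictly smaller generation. The paper leaves the strictness clause implicit; your strengthened inductive hypothesis (forcing $A=B=Y$ from $x(A)=x(B)=0$ and contradicting distinctness) fills in that detail cleanly, but it is the same argument.
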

\begin{proof}
This follows from (\ref{eqn:labeldefn}) by induction on generation since, by Lemma \ref{bounded-interstice}, the circle $C$ fills the bounded interstice of two circles of strictly smaller generation than it.
\end{proof}

Corollary \ref{gcd1} and Lemma \ref{boundedpositive} tell us that every circle in $\PP_L^+$ is labeled by a pair of nonnegative coprime integers. We now prove that every such pair is the label of a unique circle in $\PP_L^+$. At the same time, we prove the converse of Lemma \ref{det1} for $\PP_L^+$ --- that if circles $A,B \in \PP_L^+$ have the determinant of the matrix formed by their labels equal to $1$, then they are tangent with $A \prec B$. We first need the following elementary lemma.

\begin{lem}\label{technical}
Let $a$ and $b$ be positive, coprime integers. Then there exist unique integers $u$ and $v$ that satisfy the following properties:
\begin{enumerate}
\item $au - bv = 1$,
\item $0 < u \le b$, and
\item $0 \le v < a$.
\end{enumerate}
\end{lem}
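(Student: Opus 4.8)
The plan is to prove existence and uniqueness separately, treating $au - bv = 1$ as a linear Diophantine equation and using the coprimality hypothesis at both stages.

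For existence, I would first invoke B\'ezout's identity: since $\gcd(a,b) = 1$, there are integers $u_0, v_0$ with $au_0 - bv_0 = 1$. The full solution set of $au - bv = 1$ is then $\{(u_0 + bt,\, v_0 + at) : t \in \mathbb{Z}\}$, which one checks by substitution together with the observation that any two solutions differ by a solution of the homogeneous equation $a\delta_u = b\delta_v$ (and coprimality forces such a difference to have the stated form). As $t$ ranges over $\mathbb{Z}$, the first coordinate $u_0 + bt$ runs through each residue class modulo $b$ exactly once, so there is a unique $t$ for which $0 < u \le b$, since the interval $(0,b]$ contains exactly one representative of each class. This secures conditions (i) and (ii); the remaining task is to check (iii).

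The key point I would emphasize is that (iii) is \emph{forced} by (i) and (ii), so no separate adjustment of $t$ is needed. Solving (i) for $v$ gives $v = (au - 1)/b$. Because $u \ge 1$ and $a \ge 1$, we have $au - 1 \ge 0$, hence $v \ge 0$; and because $u \le b$, we have $au - 1 \le ab - 1 < ab$, hence $v < a$. Thus $0 \le v < a$, which is exactly (iii). For uniqueness, I would take two triples $(u_1,v_1)$ and $(u_2,v_2)$ satisfying (i)--(iii), subtract the two instances of (i) to get $a(u_1 - u_2) = b(v_1 - v_2)$, and use $\gcd(a,b) = 1$ to conclude $b \mid (u_1 - u_2)$. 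Condition (ii) confines both $u_1, u_2$ to $\{1, \dots, b\}$, so $|u_1 - u_2| \le b - 1 < b$, forcing $u_1 = u_2$ and then $v_1 = v_2$.

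I do not anticipate a genuine obstacle, as this is elementary; the only thing requiring care is the structural realization that one should enforce the $u$-range (ii) via the residue-class argument and then \emph{derive} the $v$-range (iii) from the bounds on $u$, rather than attempting to impose both constraints simultaneously. Boundary instances such as $a = 1$ or $b = 1$ are handled by the same argument and need no special casework.
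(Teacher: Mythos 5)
Your proof is correct and follows essentially the same route as the paper's: B\'ezout plus the standard parametrization of the solution set, a unique shift to place $u$ in $(0,b]$, and the observation that (iii) then follows from (i) and (ii). You simply spell out the derivation of (iii) and the uniqueness step more explicitly than the paper does.
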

\begin{proof}
Because $a,b$ are coprime, we can find an integer solution $(x,y)$ to the equation
\begin{equation}\label{eqn:elementary}
ax - by = 1.
\end{equation}
Given a particular solution $(x_0,y_0)$ to \eqref{eqn:elementary}, the entire solution set is
\[ \{(x,y) = (x_0 + kb, y_0 + ka) \ : \ k \in \zz\}. \]
There is then a \emph{unique} $k \in \zz$ such that $0 < x_0 + kb \le b$. Let $u := x_0 + kb$. Then $u$ satisfies property (ii). Setting $v := y_0 + ka$, property (i) is also satisfied, and property (iii) is a consequence of properties (i) and (ii).
\end{proof}

\begin{lem}\label{uniquecircle+}
Let
$\begin{bmatrix}
a & b\\
c & d
\end{bmatrix}$
be a determinant 1 matrix with nonnegative integer coefficients. Then there exist unique circles $C,C' \in \PP_L^+$ such that $\vec{x}(C) = (a,b)$ and $\vec{x}(C') = (c,d)$. Moreover, $C$ and $C'$ are tangent with $C \prec C'$.
\end{lem}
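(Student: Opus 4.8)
The plan is to prove existence, uniqueness, tangency, and the ordering $C \prec C'$ all at once, by strong induction on the entry sum $N = a+b+c+d$. The base case is $N=2$, where the only nonnegative determinant $1$ matrix is the identity, with rows $(1,0)$ and $(0,1)$. By Lemma \ref{boundedpositive}, any circle of $\PP_L^+$ whose label has a zero entry is $X$ or $Y$; since $\vec{x}(X) = (1,0)$ and $\vec{x}(Y) = (0,1)$, the circles carrying these two labels are exactly $X$ and $Y$, uniquely, and these satisfy $X \prec Y$ and are tangent.

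The engine of the induction is the observation that every nonnegative determinant $1$ matrix other than the identity has one row dominating the other entrywise. A strict crossing $a < c$, $b > d$ would force $ad - bc < 0$; and a strict crossing $a > c$, $b < d$ forces, via $ad \ge (c+1)(b+1)$ together with $ad - bc = 1$, that $b = c = 0$, i.e.\ the identity. So for a non-identity matrix the rows are comparable. After interchanging the roles of the two rows if necessary (the two resulting cases being symmetric), I would assume $a \ge c$ and $b \ge d$; the determinant condition then also yields $a > c$ and $d \ge 1$. Subtracting the second row from the first produces $M' = \begin{bmatrix} a-c & b-d \\ c & d \end{bmatrix}$, a nonnegative determinant $1$ matrix with both rows nonzero and strictly smaller entry sum. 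By the inductive hypothesis there are unique tangent circles $E \prec C'$ in $\PP_L^+$ with $\vec{x}(E) = (a-c,b-d)$ and $\vec{x}(C') = (c,d)$. The circle $C$ filling the bounded interstice for $E$ and $C'$ then lies in $\PP_L^+$, has $\vec{x}(C) = \vec{x}(E) + \vec{x}(C') = (a,b)$ by \eqref{eqn:labeldefn}, is tangent to $C'$, and lies strictly between $E$ and $C'$, so $C \prec C'$. This settles existence, tangency, and the ordering.

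The main difficulty is the uniqueness of $C$ (that of $C'$ is inherited from the induction), which amounts to the fact that a label has a unique pair of ``parents.'' Given any $\tilde{C} \in \PP_L^+$ with $\vec{x}(\tilde{C}) = (a,b)$, the inequalities $a,b \ge 1$ force $\tilde{C} \ne X, Y$, so by Lemma \ref{bounded-interstice} it fills the bounded interstice of some tangent pair $A \prec B$ in $\PP_L^+$ with $\vec{x}(A) + \vec{x}(B) = (a,b)$. Writing $\vec{x}(A) = (a_1,b_1)$ and $\vec{x}(B) = (a_2,b_2)$, Lemma \ref{det1} gives $a_1 b_2 - b_1 a_2 = 1$, which together with $a_1 + a_2 = a$, $b_1 + b_2 = b$ reduces to $b a_1 - a b_1 = 1$ with $0 < a_1 \le a$ and $0 \le b_1 < b$. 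Lemma \ref{technical}, applied to the coprime pair $(b,a)$, shows this pins down $(a_1,b_1)$ and hence the whole decomposition uniquely, so $\{\vec{x}(A), \vec{x}(B)\} = \{(a-c,b-d),(c,d)\}$. The inductive uniqueness then forces $A = E$ and $B = C'$, whence $\tilde{C}$ fills the same bounded interstice as $C$; since that filling circle is unique, $\tilde{C} = C$. I expect this decomposition-uniqueness step, together with checking that the boundary constraints line up exactly with the hypotheses of Lemma \ref{technical}, to be the most delicate part of the argument.
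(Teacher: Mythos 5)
Your proposal is correct and follows essentially the same route as the paper: induction with a row-subtraction step (the paper inducts on $\max\{a+b,c+d\}$ rather than the total entry sum, and derives the entrywise domination of one row from comparing row sums, but these are cosmetic differences), existence by filling the bounded interstice for the two inductively-obtained circles, and uniqueness by reducing the ``unique parents'' claim to Lemma \ref{technical} exactly as the paper does. The boundary constraints $a_1>0$ and $b_1<b$ that you flag do check out by the argument you anticipate (a zero coordinate would force $A=Y$ or $B=X$ by Lemma \ref{boundedpositive}, contradicting $A\prec\tilde{C}\prec B$), which is precisely how the paper handles them.
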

\begin{proof}
The proof is by induction on $\max\{a+b,c+d\}$. If $\max\{a+b,c+d\} = 1$, then necessarily
$\begin{bmatrix}
a & b\\
c & d
\end{bmatrix}
=
\begin{bmatrix}
1 & 0\\
0 & 1
\end{bmatrix}$. By Lemma \ref{boundedpositive}, all circles in $\PP_L^+$ different from $X$ and $Y$ must have $a + b > 1$, so there can be no circles in $\PP_L^+$, other than $X$ and $Y$, labeled by the pairs $(1,0)$ and $(0,1)$.

Once we have proved that there is a unique circle with label $(a,b)$, we denote that circle by $C_{(a,b)}$. At this point, therefore, we can write $C_{(1,0)} = X$ and $C_{(0,1)} = Y$.

Now fix an integer $n > 1$ and suppose we have proved the lemma, and hence constructed the circles $C_{(a,b)}$ and $C_{(c,d)}$, for any $a,b,c,d$ as in the statement of the lemma with $\max\{a+b,c+d\} < n$. We then take $a,b,c,d$ with $\max\{a+b,c+d\} = n$.

First of all, if $a+b = c+d$, then
\[ (d-b)(a+b) = d(a+b) - b(a+b) = d(a+b) - b(c+d) = ad-bc = 1, \]
so $a+b = 1$. Since $c+d = a+b$, this contradicts the fact that $\max\{a+b,c+d\} > 1$. Therefore $a+b > c+d$ or $a+b < c+d$. We prove the lemma in the case where $a+b > c+d$; the proof of the other case is virtually identical.

Since $a+b > c+d \ge 1$, we have that $0 < (a-c) + (b-d) < a + b$ and therefore
\[\max\{(a-c) + (b-d),c + d\} < a+b = n.\]
Since
$\begin{vmatrix}
a - c & b - d\\
c & d
\end{vmatrix}
= 1$, we may apply the induction hypothesis to the matrix
$\begin{bmatrix}
a-c & b-d\\
c & d
\end{bmatrix}$
once we show that $a-c \ge 0$ and $b-d \ge 0$.

Since $a + b > c + d$, we must have $a > c$ or $b > d$. If $a > c > 0$, then
\begin{align*}
ad - bc = 1 &\implies cd < bc + 1\\
&\implies cd \le bc\\
&\implies d \le b.
\end{align*}
(Note that if $c = 0$, then $ad = 1$, so $a = d = 1$. It follows from the fact that $a + b > c + d$ that $b \ge 1 = d$.) A similar argument shows that if we assume instead that $b > d$, then also $a > c$.

Therefore we have both $a - c \ge 0$ and $b - d \ge 0$, so the induction hypothesis tells us that there are unique circles $C_{(a-c,b-d)}$ and $\Ccd$ in $\PP_L^+$ satisfying $\vec{x}\left(C_{(a-c,b-d)}\right) = (a-c,b-d)$ and $\vec{x}\left(\Ccd\right) = (c,d)$ and that, moreover, these circles are tangent with $C_{(a-c,b-d)} \prec \Ccd$.

Now let $C$ be the circle that fills the bounded interstice for $C_{(a-c,b-d)}$ and $\Ccd$. Then
\begin{equation*}
\vec{x}(C) = (a-c,b-d) + (c,d) = (a,b),
\end{equation*}
so there exists a circle $C$ labeled by the pair $(a,b)$, which by construction is to the left of and tangent to $\Ccd$.

Finally, we must show that $C$ is the \emph{only} circle in $\PP_L^+$ that satisfies $\vec{x}(C) = (a,b)$. Suppose $C''$ is a circle with $\vec{x}(C'') = (a,b)$. By Lemma~\ref{bounded-interstice}, $C''$ fills the bounded interstice for two circles $A \prec B$ in $\PP_L^+$. Then $\vec{x}(C'') = \vec{x}(A) + \vec{x}(B)$, so we can write $\vec{x}(B) = (v,u)$ and $\vec{x}(A) = (a-v,b-u)$. Since $A$ and $B$ are both in $\PP_L^+$, Lemma \ref{boundedpositive} tells us that each of $v$, $u$, $a-v$, and $b-u$ is nonnegative, so $0 \le u \le b$ and $0 \le v \le a$.

In fact, we have $u \ne 0$ and $v \ne a$. Indeed, we know from Lemma \ref{boundedpositive} that if $u = 0$, then $B = X$. However, $X \preceq C'' \prec B$, so $X \ne B$, which means we cannot have $u = 0$. A similar argument shows that $a-v \ne 0$, and so $v \ne a$. Therefore $0 < u \le b$ and $0 \le v < a$, which are precisely properties (ii) and (iii) from Lemma \ref{technical}. That property (i) is satisfied follows by Lemma \ref{det1}, since $C''$ is tangent to and to the left of $B$. Since these three properties uniquely determine $(v,u)$, and since $(c,d)$ satisfies these three conditions by construction, we conclude that $v = c$ and $u = d$. By the uniqueness of $C_{(a-c,b-d)}$ and $\Ccd$ following from the induction hypothesis, we have $A = C_{(a-c,b-d)}$ and $B = \Ccd$, and it follows therefore that $C'' = C$.
\end{proof}
\begin{defn}
Lemma \ref{uniquecircle+} implies that any pair $(a,b)$ of coprime nonnegative integers is the label of a unique circle in $\PP_L^+$. As in the proof of Lemma~\ref{uniquecircle+}, we denote that circle by $C_{(a,b)}$.
\end{defn}

Lemma~\ref{uniquecircle+} yields a complete understanding of the labels of circles in $\PP_L^+$. We now define an operation which maps $\PP_L^+$ bijectively onto $\PP_L^-$, and use this to relate the labels of circles in $\PP_L^-$ to those of circles in $\PP_L^+$.

Let $\mathcal{I}$ be the unique circle which contains the three points of tangency among $C_{(1,0)}$, $C_{(0,1)}$, and $L$. (See Figure~\ref{fig:inversion}.) Define the map
\[\iota: \rr^2 \cup \{\infty\} \to \rr^2 \cup \{\infty\}\]
to be inversion with respect to $\mathcal{I}$.

\begin{figure}[h]
	\begin{overpic}[scale = .75]{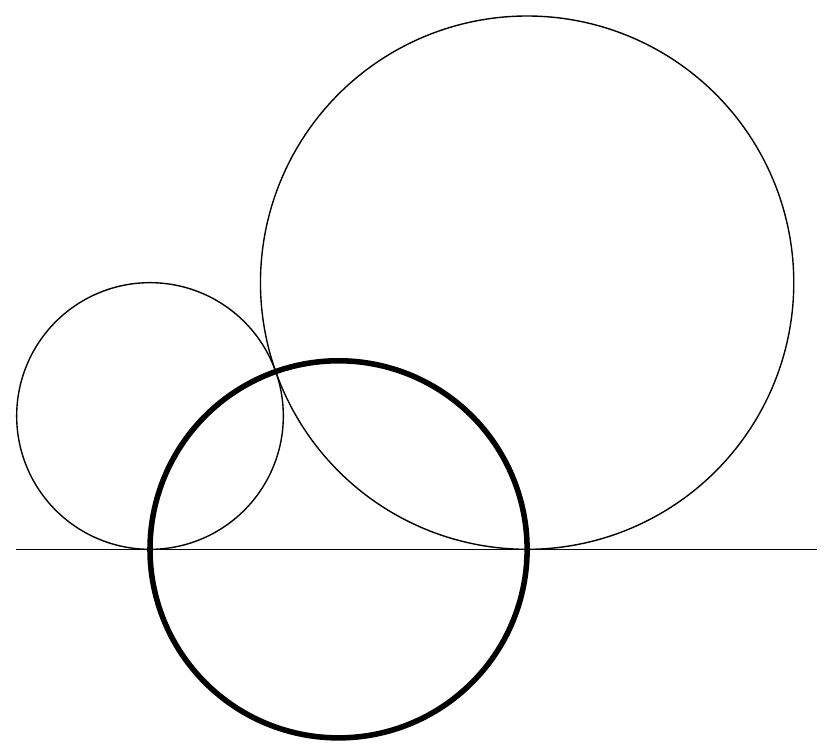}
	\put(20,85){$C_{(1,0)}$}
	\put(102,137){$C_{(0,1)}$}
	\put(72,8){$\mathcal{I}$}
	\put(170,35){$L$}
	\end{overpic}
	\caption{The inversion circle $\mathcal{I}$}
  \label{fig:inversion}
\end{figure}

Let us pause to mention some of the relevant properties of $\mathcal{I}$ and the map $\iota$. Note that when we say that $\iota$ \emph{fixes} a particular circle or set of circles, we mean only as sets in $\rr^2$, not pointwise.
\begin{enumerate}
\item Inversion with respect to a circle is a bijection of order two; i.e., $\iota \circ \iota = \id$.
\item Since inversion maps circles to circles, $\iota$ maps Apollonian packings to Apollonian packings.
\item The inversion circle $\mathcal{I}$ intersects each of $C_{(1,0)}$, $C_{(0,1)}$, and $L$ orthogonally, and therefore $\iota$ fixes each of these three circles. Therefore, by property (ii) and Corollary~\ref{samepack}, $\iota$ fixes $\PP$ (and hence $\PP_L$ since $L$ is fixed).
\item The interior of $\mathcal{I}$ contains the bounded interstice for $C_{(1,0)}$ and $C_{(0,1)}$, and the exterior of $\mathcal{I}$ contains the unbounded interstice for $C_{(1,0)}$ and $C_{(0,1)}$. Since $\iota$ maps the interior of $\mathcal{I}$ to the exterior of $\mathcal{I}$, and vice versa, and since property (iii) holds, it follows that $\iota$ maps $\PP_L^+$ to $\PP_L^-$ and vice versa.
\end{enumerate}

Because $\mathcal{I}$ intersects $L$ orthogonally, the center of $\mathcal{I}$ lies on $L$. It makes sense, then, to talk about a circle $C \in \PP_L$ lying to the left or right of $\mathcal{I}$, by which we mean that the point of tangency of $C$ with $L$ lies to the left or right of the \emph{center} of $\mathcal{I}$. We now record two more properties of $\mathcal{I}$ and $\iota$:

\begin{enumerate}
\item[(v)] If $C$ lies to the left (resp. right) of $\mathcal{I}$, then $\iota(C)$ also lies to the left (resp. right) of $\mathcal{I}$. Furthermore, if $C \prec C'$ both lie to the left (resp. right) of $\mathcal{I}$, then $\iota(C') \prec \iota(C)$ both lie to the left (resp. right) of $\mathcal{I}$.
\item[(vi)] A circle $C$ contains the center of $\mathcal{I}$ (that is, the point of tangency between $C$ and $L$ is precisely the center of $\mathcal{I}$) if and only if $\iota(C)$ is a line parallel to $L$.
\end{enumerate}

As mentioned above, the reason for introducing the inversion map $\iota$ is to set up a one-to-one correspondence between $\PP_L^+$ and $\PP_L^-$. The following lemma establishes the connection between the labels of circles in $\PP_L^-$ and their images under $\iota$, which lie in $\PP_L^+$.

\begin{lem}\label{inversion}
Let $C \in \PP_L^-$, and let $C_{(a,b)} = \iota(C) \in \PP_L^+$ be the image of $C$ under the map $\iota$. Then
\begin{equation*}
\vec{x}(C) =
\begin{cases}
(a,-b) & \mbox{ if } C \preceq C_{(1,0)}\\
(-a,b) & \mbox{ if } C_{(0,1)} \preceq C
\end{cases}
.
\end{equation*}
\end{lem}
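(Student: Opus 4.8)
The plan is to prove the statement in the uniform form $\vec{x}(C) = \sigma\bigl(\vec{x}(\iota(C))\bigr)$ for every $C \in \PP_L^-$, where $\sigma\colon \zz^2 \to \zz^2$ is the \emph{linear} map $(a,b) \mapsto (a,-b)$ when $C \preceq C_{(1,0)}$ and $(a,b) \mapsto (-a,b)$ when $C_{(0,1)} \preceq C$. The key idea is to induct not on the generation of $C$ but on the generation (with respect to $\{X,Y,L\}$) of its image $C' := \iota(C) \in \PP_L^+$, since it is the circles of $\PP_L^+$, not those of $\PP_L^-$, whose recursive structure is pinned down by Lemma~\ref{bounded-interstice}. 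The base case is $C' \in \{X,Y\}$, i.e. $C \in \{X,Y\}$: by property (iii) $\iota$ fixes $X = C_{(1,0)}$ and $Y = C_{(0,1)}$, and in either region the formula returns the correct label.

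For the inductive step, suppose $C' \notin \{X,Y\}$. By Lemma~\ref{bounded-interstice}, $C'$ fills the bounded interstice of a pair $P \prec Q$ in $\PP_L^+$ of strictly smaller generation, so $\vec{x}(C') = \vec{x}(P) + \vec{x}(Q)$. Writing $J$ for this bounded interstice, the crux is to decide which interstice of $\{\iota(P),\iota(Q),L\}$ the circle $C = \iota(C')$ fills. Since $\iota$ fixes $L$ and preserves tangency, $\iota(J)$ is one of the two interstices of $\{\iota(P),\iota(Q),L\}$, namely the one filled by $C$. Because $\iota$ interchanges the center of $\mathcal{I}$ with $\infty$, a region is bounded (omits $\infty$) precisely when its image omits the center; as $J$ is bounded, $\iota(J)$ is bounded if and only if $J$ does not contain the center of $\mathcal{I}$ — equivalently, if and only if $P$ and $Q$ lie on the same side of the center.

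In the first case ($P$, $Q$, hence $\iota(P)$, $\iota(Q)$, and $C$ all lie on one side of the center, so in a single region of $\PP_L^-$ with a common value of $\sigma$), the circle $C$ fills the \emph{bounded} interstice of $\iota(P)$ and $\iota(Q)$. The labelling rule \eqref{eqn:labeldefn} then gives $\vec{x}(C) = \vec{x}(\iota(P)) + \vec{x}(\iota(Q))$; as $P$ and $Q$ have smaller generation, the induction hypothesis gives $\vec{x}(\iota(P)) = \sigma(\vec{x}(P))$ and $\vec{x}(\iota(Q)) = \sigma(\vec{x}(Q))$, and linearity of $\sigma$ yields $\vec{x}(C) = \sigma(\vec{x}(P)+\vec{x}(Q)) = \sigma(\vec{x}(C'))$. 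The order reversal of property (v) is immaterial here because the sum is symmetric.

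The main obstacle is the second case, where $P$ and $Q$ straddle the center, so $C$ fills the \emph{unbounded} interstice of $\iota(P)$ and $\iota(Q)$ and \eqref{eqn:labeldefn} no longer computes $\vec{x}(C)$; moreover $\iota(P)$ and $\iota(Q)$ now lie in opposite regions, so the induction hypothesis attaches \emph{opposite} signs to their labels. Here I would recover $\vec{x}(C)$ from tangency instead: $C$ is tangent to both $\iota(P)$ and $\iota(Q)$, and property (v) fixes the two orderings, so Lemma~\ref{det1} supplies two linear equations for $\vec{x}(C)$ whose coefficient matrix is invertible. Substituting the candidate value $\sigma(\vec{x}(C'))$ reduces both equations to the single identity that the determinant formed by the labels of $P$ and $Q$ equals $1$, which holds because $P \prec Q$ are tangent in $\PP_L^+$ (Lemma~\ref{det1} again); uniqueness of the solution then forces $\vec{x}(C) = \sigma(\vec{x}(C'))$. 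The delicate points are precisely the geometric claim that $\iota$ carries $J$ to the interstice of the image triple selected by the center-containment criterion, and the check — via property (vi) — that, away from the strip case, no circle of $\PP_L$ is tangent at the center of $\mathcal{I}$, so that $P$ and $Q$ genuinely lie strictly to one side in the first case.
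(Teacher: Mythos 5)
Your proof is correct and shares the paper's skeleton — induction on the generation of $\iota(C)$, with Lemma \ref{bounded-interstice} supplying the pair $P \prec Q$ whose bounded interstice $\iota(C)$ fills — but it diverges in the case analysis and in the mechanism for the hard case. The paper splits into seven cases according to the position of the center of $\mathcal{I}$ relative to the three tangency points and, in each, identifies which of the three image circles is the middle one so that the additive labelling rule \eqref{eqn:labeldefn} can be solved for $\vec{x}(C)$; only two of the seven cases are written out. You instead collapse this to a two-case dichotomy (is $\iota(J)$ bounded or not?), dispatch the bounded case by linearity of the sign map, and in the unbounded case pin down $\vec{x}(C)$ as the unique solution of the two determinant equations from Lemma \ref{det1}, checking that $\sigma(\vec{x}(\iota(C)))$ satisfies both via the identity $\det\bigl(\vec{x}(P);\vec{x}(Q)\bigr)=1$. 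This is more uniform and arguably cleaner: the linear-system argument handles all the "straddle" configurations at once, including the one where $C$ comes out as a line (the paper's Case 2), whereas the paper's additive relation must be re-derived separately for each ordering of $A$, $B$, $C$. The one place you are slightly too quick is the degenerate situation where the center of $\mathcal{I}$ coincides with a tangency point of $P$ or $Q$ (so $\iota(P)$ or $\iota(Q)$ is a line); this occurs exactly for strip packings, which the lemma must cover since Proposition \ref{uniquecircle} and Theorem \ref{linfracequiv++} are applied to $\PP_1$. Your phrase "away from the strip case" suggests excluding it, but in fact your determinant-equation fallback goes through verbatim there (the coefficient matrix formed by the labels of a line and a tangent circle is still unimodular), so a sentence noting this would close the gap; with that addition the argument is complete and at least as rigorous as the paper's.
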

\begin{proof}
We will prove the statement by induction on the generation of $\iota(C)$. In the case that $\gen(\iota(C)) = 0$, we have $\iota(C) = C_{(1,0)}$ or $\iota(C) = C_{(0,1)}$. Since $C_{(1,0)}$ and $C_{(0,1)}$ are fixed by $\iota$ (and since $\iota$ is one-to-one), it follows that $C = C_{(1,0)}$ or $C = C_{(0,1)}$, and in both cases the statement holds.

Now suppose $\gen(\iota(C)) \ge 1$. Then, since $\Cab = \iota(C)$ lies in $\PP_L^+$ and is not equal to $C_{(1,0)}$ or $C_{(0,1)}$, Lemma \ref{bounded-interstice} tells us that $\Cab$ fills the bounded interstice for two circles $C_{(a_1,b_1)},C_{(a_2,b_2)} \in \PP_L^+$ with $\gen(C_{(a_1,b_1)}),\gen(C_{(a_2,b_2)}) < \gen(\Cab)$. Assume that $C_{(a_1,b_1)} \prec C_{(a_2,b_2)}$. By definition, $(a_1,b_1) + (a_2,b_2) = (a,b)$. Because inversion preserves tangencies, the circles $A = \iota(C_{(a_1,b_1)})$ and $B = \iota(C_{(a_2,b_2)})$ are tangent to each other as well as to $C$ and $L$.

There are a total of seven cases to consider, each corresponding to the position of the center of $\mathcal{I}$ with respect to the points of tangency of the circles $\Cab$, $C_{(a_1,b_1)}$, and $C_{(a_2,b_2)}$ with the line $L$ --- the center of $\mathcal{I}$ could lie on one of the three circles, it could lie between two of the circles, or it could lie to the left or right of all three of the circles. We will prove the result for two example cases; the proofs in the other cases are quite similar.

\textbf{Case 1:} Suppose that the center of $\mathcal{I}$ lies between the points of tangency of $C_{(a_1,b_1)}$ and $\Cab$ with $L$, as shown in Figure~\ref{fig:invcenter1}. In particular, $C_{(a_1,b_1)}$ is to the left of $\mathcal{I}$ and $\Cab \prec C_{(a_2,b_2)}$ are to the right. By property (v) above, we may conclude that $A$ is to the left of $\mathcal{I}$ and $B \prec C$ are to the right of $\mathcal{I}$. Because all three of the image circles necessarily lie in $\PP_L^-$, it follows that $A \preceq C_{(1,0)}$ and $C_{(0,1)} \preceq B \prec C$.

By the induction hypothesis, we know that $\vec{x}(A) = (a_1,-b_1)$ and $\vec{x}(B) = (-a_2,b_2)$. Since $A \prec B \prec C$, it follows that $\vec{x}(A) + \vec{x}(C) = \vec{x}(B)$, and so
\[\vec{x}(C) = \big(-(a_1 + a_2),b_1 + b_2\big) = (-a,b).\]

\begin{figure}[h]
	\begin{overpic}[scale = .75]{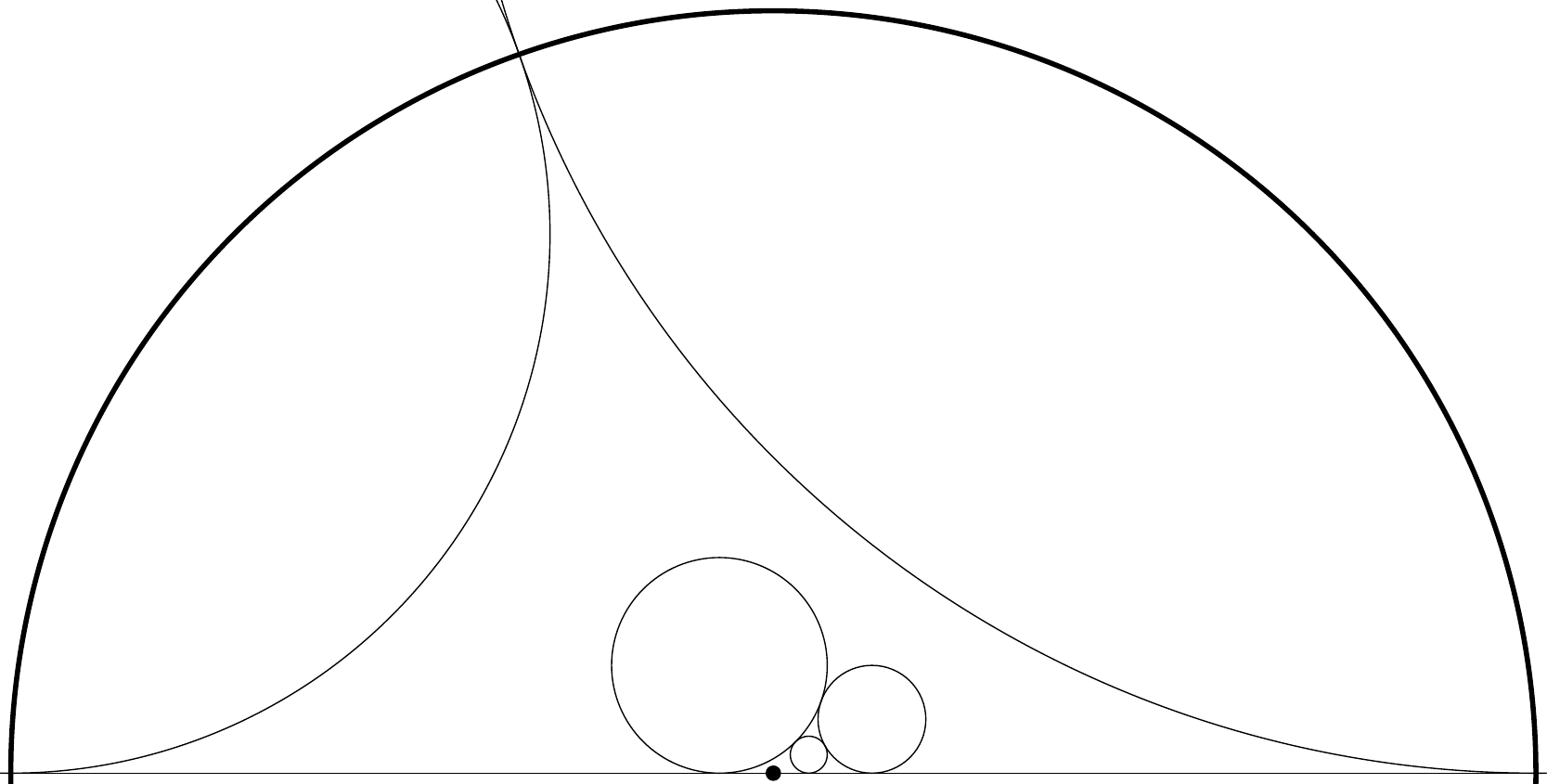}
	\put(85,85){\Large $C_{(1,0)}$}
	\put(180,85){\Large $C_{(0,1)}$}
	\put(275,160){\Huge $\mathcal{I}$}
	\put(186,-7){$\uparrow$}
	\put(179,-18){$\Cab$}
	\put(152,25){$C_{(a_1,b_1)}$}
	\put(210,-7){$\nwarrow$}
	\put(220,-18){$C_{(a_2,b_2)}$}
	\put(275,-17){\Huge $L$}
	\end{overpic}
	
	\vspace{5mm}
	
	\caption{Case 1 of Lemma \ref{inversion}. The point on the line $L$ represents the center of $\mathcal{I}$.}
  \label{fig:invcenter1}
\end{figure}

\textbf{Case 2:} Suppose that the center of $\mathcal{I}$ coincides with the point of tangency between $\Cab$ and $L$, as shown in Figure~\ref{fig:invcenter2}. Then $A \preceq C_{(1,0)}$, $C_{(0,1)} \preceq B$, and $C$ is a line parallel to $L$, which means that $C$ is to the left of every circle in $\PP_L$. By induction, we have $\vec{x}(A) = (a_1,-b_1)$ and $\vec{x}(B) = (-a_2,b_2)$. Since $C \prec A \prec B$, we have $\vec{x}(A) = \vec{x}(C) + \vec{x}(B)$. It follows that
\[\vec{x}(C) = \big(a_1 + a_2, -(b_1 + b_2)\big) = (a,-b).\]

\begin{figure}[h]
	\begin{overpic}[scale = .75]{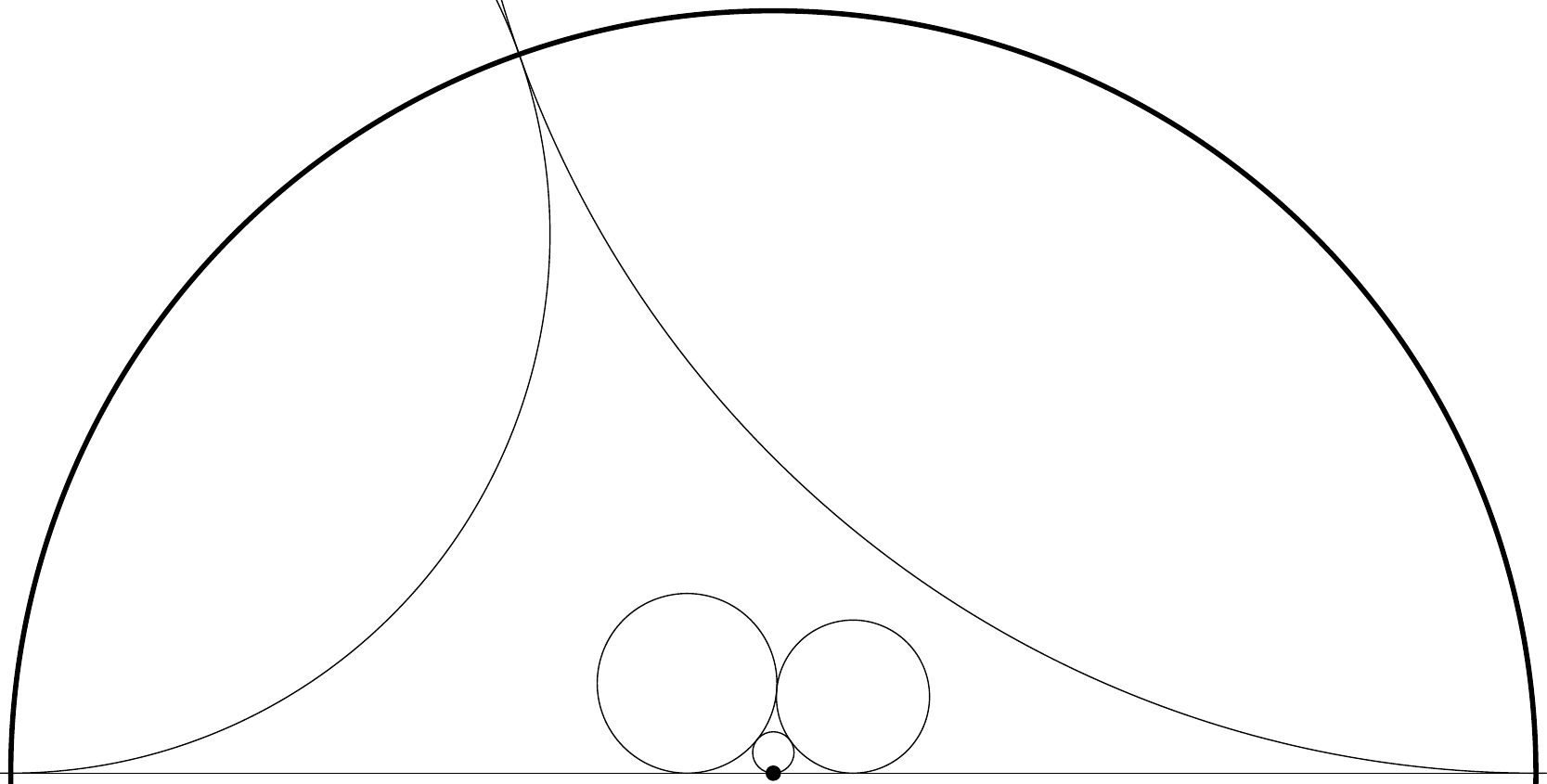}
	\put(85,85){\Large $C_{(1,0)}$}
	\put(180,85){\Large $C_{(0,1)}$}
	\put(275,160){\Huge $\mathcal{I}$}
	\put(178,-9){$\uparrow$}
	\put(170,-19){$\Cab$}
	\put(144,21){$C_{(a_1,b_1)}$}
	\put(183,19){\small $C_{(a_2,b_2)}$}
	\put(275,-17){\Huge $L$}
	\end{overpic}
	
	\vspace{5mm}
	
	\caption{Case 2 of Lemma \ref{inversion}}
  \label{fig:invcenter2}
\end{figure}

As mentioned above, the proofs of the other five cases are very similar to these two.
\end{proof}

Combining Lemmas \ref{uniquecircle+} and \ref{inversion} we obtain a complete understanding of how the circles in $\PP_L$ are labeled and when two labeled circles are tangent to one another.

\begin{prop}\label{uniquecircle}
For any integers $a$ and $b$ with $\gcd(a,b) = 1$, there is either
a unique circle in $\PP_L$ labelled by $(a,b)$ or a unique circle labelled
by $(-a,-b)$, but not both. If $a$ and $b$ are both nonnegative then the label is $(a,b)$. If $\Cab$ and $\Ccd$ are the unique circles in $\PP_L$ with labels $(a,b)$ and $(c,d)$ respectively, then $\Cab$ is tangent to $\Ccd$ on the
left if and only if
$\begin{vmatrix}
a & b\\
c & d
\end{vmatrix}
= 1$.
\end{prop}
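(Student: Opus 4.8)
The statement packages three claims: that $\vec{x}$ identifies the coprime pairs realized in $\PP_L$ (parts one and two), and the tangency criterion (part three). My plan is to get the first two by splicing the $\PP_L^+$ description from Lemma \ref{uniquecircle+} together with the inversion dictionary of Lemma \ref{inversion}, and to get the tangency criterion from Lemma \ref{det1} in the forward direction and from a reduction to Lemma \ref{uniquecircle+} via $\iota$ in the reverse direction. First I would record the two ingredients. By Lemma \ref{uniquecircle+} and the definition following it, $\vec{x}$ restricts to a bijection between $\PP_L^+$ and the set of coprime pairs $(a,b)$ with $a,b\ge 0$, and it is exactly the $\det=1$ pairs that label tangent circles with the first to the left. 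Since $\iota$ is a bijection $\PP_L^-\to\PP_L^+$ (property (iv)) fixing $C_{(1,0)}$ and $C_{(0,1)}$, Lemma \ref{inversion} tells me that every $C\in\PP_L^-\setminus\{X,Y\}$ carries a label $(a,-b)$ or $(-a,b)$ with $\iota(C)=\Cab$ and $a,b\ge 1$. Every such label has one strictly positive and one strictly negative coordinate, so it is distinct from all the (nonnegative) $\PP_L^+$ labels; together with injectivity of $\vec{x}$ on $\PP_L^+$ and on $\PP_L^-$ separately, this gives injectivity of $\vec{x}$ on $\PP_L=\PP_L^+\cup\PP_L^-$.

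For the image I would split on the sign of $ab$. If $a,b\ge 0$, then $(a,b)$ is realized in $\PP_L^+$, while $(-a,-b)$ has no strictly positive coordinate and hence lies neither in the $\PP_L^+$ image nor in the $\PP_L^-$ image described above; so exactly $(a,b)$ is realized, which also proves part two. The case $a,b\le 0$ follows by applying this to $(-a,-b)$. Finally, if $ab<0$, then $(|a|,|b|)$ is an interior pair with both entries $\ge 1$, and the single circle $\iota(C_{(|a|,|b|)})$ contributes exactly one of the two opposite vectors $(|a|,-|b|)$ and $(-|a|,|b|)$, neither of which is a $\PP_L^+$ label; so again exactly one of $(a,b)$ and $(-a,-b)$ is realized. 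This settles part one.

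For part three, the forward implication is immediate: if $\Cab$ and $\Ccd$ are tangent with $\Cab\prec\Ccd$, then Lemma \ref{det1} gives $ad-bc=1$. For the converse I would start from $ad-bc=1$ with both labels realized. The crucial preliminary observation is a short sign computation showing $|ad-bc|\ge 2$ whenever one pair has both coordinates positive and the other has coordinates of opposite sign; this rules out the ``mixed'' configuration in which one circle lies strictly inside the bounded interstice and the other strictly inside the unbounded interstice, consistent with the Remark after Definition \ref{defn:pairinterstice} that such circles are never tangent. Hence $ad-bc=1$ forces the two circles either to be both nonnegative, so both in $\PP_L^+$, where Lemma \ref{uniquecircle+} finishes directly, or to be both in $\PP_L^-$ (each being $X$, $Y$, or a mixed-sign label, all of which lie in $\PP_L^-$). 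In the latter case I apply $\iota$, which preserves tangency and, by Lemma \ref{inversion}, carries both labels back to nonnegative labels in $\PP_L^+$; Lemma \ref{uniquecircle+} decides tangency among the images, and pulling back through $\iota$ recovers tangency of $\Cab$ and $\Ccd$.

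The main obstacle is the order bookkeeping in this last reduction. Transporting the labels by $\iota$ flips signs, so the transported determinant becomes $-1$, and I must reinstate $\det=1$ by swapping the two rows; geometrically this matches property (v), which says $\iota$ reverses $\prec$ for two circles on the same side of $\mathcal{I}$ but preserves the relative order of a circle $\preceq X$ and one $\succeq Y$. I expect the delicate part to be checking, in each admissible sign configuration of $(a,b)$ and $(c,d)$, that the determinant hypothesis lands the images in exactly the configuration Lemma \ref{uniquecircle+} handles and that the resulting tangency pulls back with $\Cab$ genuinely to the left of $\Ccd$; the boundary circles $X=C_{(1,0)}$ and $Y=C_{(0,1)}$, which belong to both $\PP_L^+$ and $\PP_L^-$ and can be tangent to mixed-sign circles, will need to be routed carefully through whichever side makes the reduction apply.
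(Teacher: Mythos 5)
Your proposal is correct and follows essentially the same route as the paper's proof: the first two claims come from combining Lemma \ref{uniquecircle+} with Lemma \ref{inversion} and the bijectivity of $\iota$, the forward tangency direction is Lemma \ref{det1}, and the converse is handled by showing the determinant condition forces both circles into the same half ($\PP_L^+$ or $\PP_L^-$) and then transporting via $\iota$ to Lemma \ref{uniquecircle+}. The order bookkeeping you flag as delicate is dispatched in the paper simply by invoking Lemma \ref{det1} once tangency is known: the left-to-right determinant must be $+1$, so $ad-bc=1$ forces $\Cab \prec \Ccd$.
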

\begin{proof}
Lemmas \ref{uniquecircle+} and \ref{inversion}, together with the fact that the inversion operation $\iota$ acts as a bijection between $\PP_L^+$ and $\PP_L^-$, imply the first claim. Lemma \ref{det1} already tells us the `only if' part of the second statement. So consider circles $C_{(a,b)}$ and $C_{(c,d)}$ with $ad - bc = 1$. If all $a,b,c,d$ are nonnegative then Lemma \ref{uniquecircle+} tells us $C_{(a,b)}$ and $C_{(c,d)}$ are tangent. If one of $a,b$ is negative, then the condition $ad - bc = 1$ implies that one of $c,d$ must be either negative or zero. But then Lemmas \ref{uniquecircle+} and \ref{inversion} imply that the circles $\iota(C_{(a,b)})$ and $\iota(C_{(c,d)})$ are tangent. Since $\iota$ preserves tangencies, it follows that $C_{(a,b)}$ and $C_{(c,d)}$ are also tangent. That $\Cab$ is to the left of $\Ccd$ follows from Lemma~\ref{det1}.
\end{proof}

\section{Self-similar half-plane packings}\label{sec:selfsim}

We are now at a point where we may begin to describe the similarities between two half-plane packings. First we recall exactly what is meant by a similarity of $\rr^2$.

\begin{defn}
The map $\Phi : \rr^2 \to \rr^2$ is called a \textbf{similarity of $\rr^2$} if there exists some constant $\mu > 0$ such that
\begin{equation*}
||\Phi(x) - \Phi(y)|| = \mu||x - y||
\end{equation*}
for all $x,y \in \rr^2$.
\end{defn}

Every similarity of the plane takes the form
\[ \Phi(x) = \mu A x + b, \]
where $\mu > 0$, $A$ is an orthogonal matrix, and $b \in \rr^2$. We say that $\Phi$ is \textbf{orientation-preserving} if $\det A = +1$, and  \textbf{orientation-reversing} if $\det A = -1$.

The set of similarities of $\rr^2$ forms a group under composition, called the \textbf{similarity group} of $\rr^2$, which we will denote by $\sss$. The orientation-preserving similarities form a subgroup $\sss_+$.

Similarities take circles to circles, and preserve tangency, so they take Apollonian circle packings to Apollonian circle packings. A key fact about the action of similarities on circle packings is the following.

\begin{lem}\label{3circ}
Let $\Phi$ be a similarity of $\rr^2$, and let $A$, $B$, and $C$ be three mutually tangent circles with disjoint interiors. If $A$, $B$, and $C$ have collinear centers, then $\Phi$ is determined by the three circles $\Phi(A)$, $\Phi(B)$, and $\Phi(C)$, up to a reflection in the line on which the centers of the image circles lie. If $A$, $B$, and $C$ have non-collinear centers, then $\Phi$ is completely determined by $\Phi(A)$, $\Phi(B)$, and $\Phi(C)$.
\end{lem}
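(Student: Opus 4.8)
The plan is to recast the lemma as a rigidity statement and prove it by comparing two candidate similarities. Suppose $\Phi_1$ and $\Phi_2$ are similarities with $\Phi_1(A) = \Phi_2(A)$, $\Phi_1(B) = \Phi_2(B)$, and $\Phi_1(C) = \Phi_2(C)$ as sets, and set $\Psi := \Phi_2^{-1} \circ \Phi_1$, which is again a similarity. The whole lemma reduces to the claim that any similarity $\Psi$ fixing each of $A$, $B$, $C$ setwise is the identity when the centers are non-collinear, and is either the identity or the reflection in the line through the centers when they are collinear. Indeed, the completeness statement is then immediate, and in the collinear case we recover the desired form by conjugating: if $\ell$ is the line through the centers of $A,B,C$ and $\rho_\ell$ the reflection in it, then $\Phi_2 \rho_\ell \Phi_2^{-1} = \rho_{\Phi_2(\ell)}$, and $\Phi_2(\ell)$ is exactly the line on which the centers of the image circles lie.

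The first step is to identify $\Psi$ as an isometry fixing the relevant centers. Writing $\Psi(x) = \mu A x + b$ with $\mu > 0$ and $A$ orthogonal, I use that a similarity sends the center of a circle to the center of its image and scales radii by $\mu$. At least one of $A$, $B$, $C$ is a genuine circle; since $\Psi$ fixes it setwise, its radius is preserved, forcing $\mu = 1$, so $\Psi$ is an isometry. Moreover $\Psi$ fixes the center $O_A$ of $A$ (and likewise $O_B$, $O_C$) whenever that circle is genuine, since the fixed circle and its center determine each other.

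The two cases then follow from standard facts about plane isometries. If $O_A, O_B, O_C$ are non-collinear, then $\Psi$ is an isometry fixing three non-collinear points, hence $\Psi = \id$ and $\Phi_1 = \Phi_2$, giving the completeness statement. If the centers are collinear on a line $\ell$, then $\Psi$ fixes two distinct points of $\ell$ and therefore fixes $\ell$ pointwise; an isometry of $\rr^2$ fixing a line pointwise is either the identity or the reflection across that line, so $\Psi \in \{\id, \rho_\ell\}$. (One checks the reflection ambiguity is genuinely attained in the collinear configuration of three genuine circles, e.g. two inner circles tangent at the center of a circumscribing circle, where all three are symmetric about $\ell$.) Combining with the conjugation identity from the first paragraph completes this case.

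The step requiring the most care is the treatment of configurations in which one (or, in the strip case, two) of $A$, $B$, $C$ is a straight line, since such a circle has no finite center. Here I would read the line's ``center'' as its point at infinity, so that collinearity of the centers amounts to the line through the two finite centers being parallel to the line $L$. The key observation is that $\Psi$ must fix $L$ setwise, and the candidate reflection $\rho_\ell$ fails to preserve $L$ unless $\ell$ is perpendicular to $L$ or equal to $L$ — situations that cannot occur for two circles tangent to $L$ on the same side and to each other. Consequently, whenever a line is present the offending reflection is ruled out and $\Psi = \id$, so these configurations fall squarely under the ``completely determined'' conclusion; the genuine reflection ambiguity is realized only when all three circles are bounded circles with collinear centers. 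Verifying this degenerate bookkeeping, rather than the generic argument, is where the real work lies.
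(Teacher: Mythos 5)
Your core argument is the same as the paper's: reduce to the similarity $\Psi = \Phi_2^{-1}\circ\Phi_1$ fixing $A$, $B$, $C$ setwise, observe that it preserves radii (so $\mu=1$) and fixes the centers, and then invoke the rigidity of plane isometries fixing three non-collinear points (identity) or three collinear points (identity or the reflection in their common line). That part is correct and is essentially the paper's three-sentence proof.

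However, the ``degenerate bookkeeping'' you flag as the real work contains a genuine error. You conclude that \emph{whenever a line is present the offending reflection is ruled out and $\Psi=\id$}, so that the reflection ambiguity occurs only for three bounded circles. This fails in the strip configuration: if $A$ and $B$ are parallel lines and $C$ is a circle tangent to both, the reflection in the line through the center of $C$ \emph{perpendicular} to $A$ and $B$ fixes all three circles setwise, so $\Phi$ is genuinely determined only up to that reflection --- exactly as the remark following the lemma in the paper states. Your argument misses this because your reasoning (``the candidate reflection fails to preserve $L$ unless its axis is perpendicular to $L$\dots situations that cannot occur for two circles tangent to $L$'') is tailored to the case of one line and two genuine circles, and because you place the ``center'' of a line at infinity \emph{along} the line rather than, as the paper does, at infinity \emph{orthogonal} to it. Under the paper's convention the collinearity condition with one line present forces the two finite centers onto a perpendicular to $L$ (impossible for tangent circles resting on $L$, hence ``completely determined''), while with two parallel lines present the common line of centers is the perpendicular through the center of $C$, which is precisely the axis of the surviving reflection. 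Your convention instead declares two equal-radius circles on $L$ to have ``collinear'' centers and identifies the wrong candidate axis, so the case analysis needs to be redone with the orthogonal-direction convention to match the statement of the lemma.
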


\begin{rem}
Here we mean that the `center' of a line $L$ lies `at infinity' orthogonal to $L$ in the direction of its chosen interior. If one of the circles $A,B,C$, say $A$, is a line, then the collinearity condition is satisfied if and only if another of the circles, say $B$, is also a line, parallel to $A$. In this case $\Phi$ is determined up to a reflection in the line through the center of $C$ that is orthogonal to $A$ and $B$.
\end{rem}

\begin{proof}
Since similarities form a group, it suffices to consider the similarities that \emph{fix} $A$, $B$, and $C$ (as sets, not pointwise). If a similarity fixes the circles $A$, $B$ and $C$, then it fixes their centers. A similarity that fixes three non-collinear points must be the identity. One that fixes three distinct collinear points is either the identity or a reflection in the line formed by them.
\end{proof}

\begin{defn}
For $\Phi \in \sss$ and an Apollonian packing $\PP$, we write
\begin{equation*}
\Phi \cdot \PP := \{\Phi(C) : C \in \PP\}.
\end{equation*}
Two packings $\PP$ and $\PP'$ are \textbf{similar} if $\PP' = \Phi \cdot \PP$ for some $\Phi \in \sss$.

The group
\begin{equation*}
\Symm(\PP) := \{\Phi \in \sss \ | \ \Phi \cdot \PP = \PP\}
\end{equation*}
is the \textbf{self-similarity group of $\PP$}. The subgroup of $\Symm(\PP)$ consisting only of orientation-preserving similarities is the \textbf{orientation-preserving self-similarity group of $\PP$}, denoted by $\Symm_+(\PP)$. A packing $\PP$ is \textbf{self-similar} if $\Symm(\PP)$ is nontrivial.
\end{defn}

In order to establish similarity between two packings, we look at the curvatures of the circles involved.

\begin{defn}
For a circle $C$ in $\rr^2$, the \textbf{curvature} of $C$, denoted $\curv(C)$ is the reciprocal of the radius of $C$. A straight line in $\rr^2$ is considered to have curvature zero.
\end{defn}

Lemma \ref{3circ} allows us to check similarity by looking only at the curvatures in a triple of mutually tangent circles in each packing.

\begin{lem} \label{similar}
The packings $\PP$ and $\PP'$ are similar if and only if they contain triples of mutually tangent circles $(A,B,C)$ and $(A',B',C')$ respectively, such that there exists $\mu > 0$ with
\[ \curv(A') = \mu\curv(A), \; \curv(B') = \mu \curv(B), \; \curv(C') = \mu \curv(C). \]
\end{lem}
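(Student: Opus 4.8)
The plan is to prove the two implications separately; the forward direction is routine and the reverse direction carries the geometric content.

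For the forward ($\Rightarrow$) direction, suppose $\PP' = \Phi \cdot \PP$ with $\Phi(x) = \mu_0 A x + b$. By Definition~\ref{apollonian}(i), $\PP$ contains a triple $(A,B,C)$ of mutually tangent circles, and then $(A',B',C') := (\Phi(A),\Phi(B),\Phi(C))$ is such a triple in $\PP'$. A similarity of ratio $\mu_0$ multiplies every radius by $\mu_0$ and hence divides every curvature by $\mu_0$, so $\curv(\Phi(D)) = \mu_0^{-1}\curv(D)$ for $D \in \{A,B,C\}$, and the claim holds with $\mu = \mu_0^{-1} > 0$.

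For the reverse ($\Leftarrow$) direction, the goal is to manufacture a single similarity $\Phi$ with $\Phi(A)=A'$, $\Phi(B)=B'$, $\Phi(C)=C'$. Once we have it, $\Phi \cdot \PP$ is an Apollonian packing sharing the mutually tangent triple $(A',B',C')$ with $\PP'$, so $\Phi \cdot \PP = \PP'$ by Corollary~\ref{samepack}, which establishes similarity. I would first apply a dilation of ratio $1/\mu$ to $\PP$; since curvature scales inversely with the dilation ratio, this makes the images of $A,B,C$ have radii exactly equal to those of $A',B',C'$ respectively. It then suffices to produce an \emph{isometry} carrying a triple of mutually tangent circles to another such triple with the same three radii, and to compose it with the dilation. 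I would build this isometry from the triangle of centers: for mutually tangent circles the distance between two centers equals $r_i+r_j$ under external tangency or $|r_i-r_j|$ when one circle's interior is the unbounded component, with the degenerate ``center at infinity'' handling a line. Thus matching radii together with matching tangency types give congruent center-triangles (SSS), and the isometry realizing that congruence carries each circle of the first triple onto the corresponding circle of the second. Lemma~\ref{3circ} can be used to confirm that this isometry is determined up to at most a reflection in the line of centers, so the remaining orientation freedom is easily absorbed.

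The step I expect to be the main obstacle is the bookkeeping of tangency \emph{type}. Two mutually tangent triples can share the same unsigned curvatures and yet fail to be similar if their enclosure patterns differ — compare three externally tangent circles with two disjoint circles inscribed in a common enclosing circle of the same radii. To make the SSS argument legitimate one must check that the proportional-curvature hypothesis, read together with the disjoint-interior convention of Definition~\ref{apollonian} and the prescribed choice of interior for each circle, forces the two triples to have matching tangency types (equivalently, that the curvature data effectively records the sign dictated by which complementary component is the interior). I would isolate this as the crux and discharge it by enumerating the possible configurations of three mutually tangent circles with disjoint interiors --- all externally tangent; one enclosing circle with two interior circles; and the cases involving one or two lines --- verifying in each that the center-triangle side lengths, and hence the configuration up to isometry, are pinned down by the given curvatures.
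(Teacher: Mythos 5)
Your route is the paper's own: the forward direction by pushing a triple through $\Phi$, and the converse by manufacturing a similarity carrying $(A,B,C)$ to $(A',B',C')$ and then invoking Corollary~\ref{samepack}. The paper compresses the construction of that similarity into a parenthetical (``translate and dilate to get $A$ to $A'$, rotate to get $B$ to $B'$, reflect if necessary to get $C$ to $C'$''), so your dilation-plus-SSS-on-center-triangles argument is a fleshed-out version of the same idea, and your forward direction is fine.

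The step you isolate as the crux, however, cannot be discharged the way you plan, and it is worth seeing why. The paper's curvature is the \emph{unsigned} reciprocal of the radius; it carries no record of which complementary component is the interior. Consequently the enumeration you propose does not pin down the center-triangle: circles of radii $2,1,1$ can be arranged with all three externally tangent (center distances $3,3,2$) or with the radius-$2$ circle, interior taken to be the unbounded component, enclosing the two unit circles (center distances $1,1,2$). Both are legitimate mutually tangent triples with disjoint interiors, both generate Apollonian packings, and those two packings are not similar: each is bounded, and the ratio of the bounding circle's radius to that of the largest circle it contains is $2$ for the enclosing configuration but $\bigl(2\sqrt{2}-\tfrac{5}{2}\bigr)^{-1}\!\big/2 \approx 1.52$ for the external one, and this ratio is a similarity invariant. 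So the example you raise is not a case to be checked off --- it is a genuine counterexample to the ``if'' direction of the lemma as literally stated, and no amount of case analysis will make the unsigned curvature data determine the configuration. The paper's proof has exactly the same unacknowledged gap: the prescribed similarity taking $A\mapsto A'$, $B\mapsto B'$, $C\mapsto C'$ need not exist. The repair is either to use signed curvatures (negative when the interior is the unbounded component, so that the center distance is always $|\rho_i+\rho_j|$ and SSS applies), or to add the hypothesis that the two triples have matching tangency types. In every application of the lemma in this paper one circle of each triple is the line $L$ and the other two are genuine circles on the packed side of $L$, whose interiors are therefore bounded disks; external tangency is then forced, the center-triangles are congruent after your dilation, and both your argument and the paper's go through unchanged.
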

\begin{proof}
If $\PP$ and $\PP'$ are similar via similarity $\Phi$ with scale factor $\mu$, then take any triple $(A,B,C)$ and set $A' = \Phi(A), B' = \Phi(B), C' = \Phi(C)$. To prove the converse, choose a similarity $\Phi$ of the plane that takes $A$ to $A'$, $B$ to $B'$ and $C$ to $C'$. (One can choose a translation composed with dilation to get $A$ to $A'$, add a rotation to get $B$ to $B'$, then add a reflection if necessary to get $C$ to $C'$.) By construction, $\PP'$ and $\Phi(\PP)$ both contain the triple $\{A',B',C'\}$, and therefore $\PP' = \Phi(\PP)$ by Corollary \ref{samepack}.
\end{proof}

Turning now to half-plane packings, that is, those that have a straight line for at least one of the circles, recall that we can focus on the following packings.

\begin{defn}\label{palpha}
Let $\alpha \in \rr$.  Then $\PP_{\alpha}$ is the packing generated by a triple $\{X,Y,L\}$ of mutually tangent circles, where $L$ is the $x$-axis in $\rr^2$, $X$ is a circle of curvature $\alpha^2$ tangent to (and above) $L$ at the origin, and $Y$ is a circle of curvature 1 resting on $L$ and tangent (on the right) to $X$. The generating triple is illustrated in Figure \ref{P_alpha} in the Introduction.
\end{defn}

\begin{lem}\label{simtopalpha}
Every half-plane packing $\PP$ is similar, via an orientation-preserving similarity, to $\PP_{\alpha}$ for some $\alpha > 0$.
\end{lem}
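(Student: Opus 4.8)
The plan is to use Lemma~\ref{similar} to reduce everything to a statement about curvatures of a single mutually tangent triple, and then to exhibit an explicit orientation-preserving similarity that moves a suitable triple of $\PP$ into the generating configuration of Figure~\ref{P_alpha}. Since $\PP$ is a half-plane packing, it contains a line; after an orientation-preserving similarity (a rotation together with a translation, possibly composed with the orientation-preserving ``flip'' that keeps the packed side up) we may assume this line is the $x$-axis $L$ with the remaining circles in the upper half-plane. This is exactly the normalization already invoked in \S\ref{sec:acp}, and it is achieved by orientation-preserving maps because rotations, translations, and positive dilations are all orientation-preserving.

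Next I would locate the right triple. The set $\PP_L$ of circles tangent to $L$ is nonempty (indeed infinite), and I would pick two tangent circles $X \prec Y$ in $\PP_L$, neither of which is the line; such a tangent pair exists because any generating triple of mutually tangent circles in a half-plane packing must include two genuine circles tangent to $L$ (the line together with two circles, or, in the strip case, two lines and a circle, but in either case a tangent pair of circles tangent to $L$ is present after passing to a later generation). With $X$ and $Y$ of curvatures $\kappa_X$ and $\kappa_Y$, Lemma~\ref{similar} tells me that $\PP$ is similar to any packing containing a triple with curvatures proportional to $(\kappa_X,\kappa_Y,0)$, the $0$ coming from $L$. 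Setting $\alpha = \sqrt{\kappa_X/\kappa_Y}$, the generating triple $\{X',Y',L'\}$ of $\PP_\alpha$ has curvatures $(\alpha^2,1,0) = \tfrac{1}{\kappa_Y}(\kappa_X,\kappa_Y,0)$, so the proportionality hypothesis of Lemma~\ref{similar} holds with $\mu = 1/\kappa_Y$, and $\PP$ is similar to $\PP_\alpha$.

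The one remaining point is to upgrade the similarity produced by Lemma~\ref{similar} to an \emph{orientation-preserving} one, and this is the step I expect to require the most care. The triple $\{X,Y,L\}$ has collinear centers in the sense of the remark following Lemma~\ref{3circ} (the centers of $X$ and $Y$ and the ``center at infinity'' of $L$ are not collinear, so in fact this triple has \emph{non}-collinear centers). Because the centers are non-collinear, Lemma~\ref{3circ} says the similarity matching the two triples is \emph{uniquely} determined once we fix the correspondence $X\mapsto X'$, $Y\mapsto Y'$, $L\mapsto L'$ respecting left/right order and the upper half-plane; I would argue that the normalization already places both packings with $L$ as the $x$-axis, the packed region above, and $X \prec Y$, so the matching similarity preserves the upward orientation of the half-plane and the left-to-right order of $X,Y$, and hence has positive determinant. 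Concretely, I would build the map as a composite of a translation sending the tangency point of $X$ with $L$ to the origin, a positive dilation by $\sqrt{\kappa_Y/\kappa_X}\cdot(\text{appropriate factor})$ normalizing the curvatures, and no reflection, since the order $X \prec Y$ and the side of $L$ are already correct; any reflection would reverse one of these and is therefore unnecessary. This shows the matching map lies in $\sss_+$, completing the proof.
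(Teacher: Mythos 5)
Your proposal is correct and follows essentially the same route as the paper: pick two tangent circles of $\PP$ tangent to the line, set $\alpha$ to be the square root of the ratio of their curvatures, and invoke Lemma~\ref{similar}. You are in fact more careful than the paper's two-line proof about why the resulting similarity can be taken orientation-preserving (the paper leaves this implicit), and your explicit translation-plus-positive-dilation construction handles that point correctly; the only blemish is the self-contradictory first clause of your sentence about collinear centers, whose conclusion (non-collinear, so Lemma~\ref{3circ} applies with uniqueness) is nonetheless right.
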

\begin{proof}
Choose any two tangent circles in $\PP$ that are tangent to a line but are not themselves lines. Taking $\mu$ to be the ratio of their curvatures (in the appropriate order), and $\alpha = \sqrt{\mu}$, this follows from \ref{similar}.
\end{proof}

The key to analyzing Apollonian circle packings is the following result, due to Descartes. This describes the relationship between the curvatures of four mutually tangent circles in the plane. A selection of proofs of this are given in \cite{pedoe:1967}.

\begin{thm}[Descartes' Circle Theorem]\label{descartes}
Let $w$, $x$, $y$, and $z$ represent the curvatures of four mutually tangent circles in the Euclidean plane. Then
\begin{equation*}
2(w^2 + x^2 + y^2 + z^2) = (w + x + y + z)^2.
\end{equation*}
\end{thm}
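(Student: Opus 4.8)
The plan is to prove the relation in its symmetric, scale-invariant form by first reducing to a convenient normalization and then reducing the whole statement to a single explicit computation: the radius of a fourth circle tangent to three given mutually tangent ones. The first observation is that both sides of the identity are homogeneous of degree two in $(w,x,y,z)$, so the relation is invariant under scaling all four curvatures by a common positive factor, which is exactly the effect of a dilation (each $\curv$ is multiplied by $1/\mu$ under a dilation by $\mu$). I may therefore normalize, say by fixing one radius to be $1$, and also apply a translation and rotation to place the three ``given'' circles in standard position. It is convenient to adopt the usual signed-curvature convention: a circle enclosing the other three is assigned negative curvature, and a straight line curvature $0$; with this convention the single identity covers all tangency configurations uniformly, including the half-plane cases relevant to this paper where one curvature vanishes.

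Next I would reformulate the goal. Fixing $w,x,y$ and regarding the identity as a quadratic in $z$, a short rearrangement gives
\[
z^2 - 2(w+x+y)z + \bigl[2(w^2+x^2+y^2)-(w+x+y)^2\bigr] = 0,
\]
whose discriminant works out to $16(wx+xy+yw)$, so that
\[
z = w + x + y \pm 2\sqrt{wx+xy+yw}.
\]
Thus the theorem is equivalent to \emph{Soddy's formula}: the two circles tangent to three mutually tangent circles of curvatures $w,x,y$ have curvatures given by this expression, the two signs corresponding to the two circles guaranteed by the theorem of Apollonius quoted earlier. The core of the proof is therefore to establish this formula by direct geometry.

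To do that I would place the centers of the three circles of radii $r_1,r_2,r_3$ as the vertices of a triangle whose side lengths are the $r_i+r_j$, solve for the center $O_4$ of a fourth circle of radius $r_4$ subject to the two tangency conditions $|O_4-O_i| = r_4 + r_i$ (with signs adjusted for internal tangency), and read off $r_4$. Eliminating the coordinates of $O_4$ from the two distance equations should yield, after simplification, precisely $1/r_4 = 1/r_1+1/r_2+1/r_3 \pm 2\sqrt{\cdots}$; combined with the quadratic computation above, this gives the symmetric identity.

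The main obstacle will be the coordinate algebra in this last step: solving the two simultaneous quadratic distance equations for $O_4$ and collapsing the result into the clean symmetric form is where all the work lies, and one must track the tangency signs carefully so the single formula survives when a circle is internally tangent or degenerates to a line. A conceptually cleaner alternative, trading this computation for some linear-algebra setup, is to encode each circle by its augmented curvature--center coordinates in a Minkowski space, under which mutual tangency becomes a fixed bilinear condition; the Gram matrix of four mutually tangent circles is then so constrained that the Descartes relation falls out of a determinant identity, with the added advantage of being manifestly invariant under inversion.
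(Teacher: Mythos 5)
The paper does not prove this theorem at all: it cites Pedoe's survey of proofs and uses the statement as a black box (only ever in the degenerate case $w=0$ of Corollary \ref{descartesline}), so there is no in-paper argument to compare yours against. Your overall strategy is a legitimate and standard one, and the first reduction is carried out correctly: treating the identity as a quadratic in $z$ with discriminant $16(wx+xy+yw)$ does show it is equivalent to Soddy's formula $z = w+x+y \pm 2\sqrt{wx+xy+yw}$, and the two sign choices do match the two Apollonius circles. Your remark about signed curvatures is also well taken --- the identity as stated in the paper is false for a bounded configuration unless the enclosing circle is given negative curvature, a point the paper glosses over because it only invokes the theorem for circles on one side of a line.

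The gap is that the substance of the theorem is exactly the step you defer. Everything up to ``Eliminating the coordinates of $O_4$ \dots should yield, after simplification, precisely $1/r_4 = \cdots$'' is reversible algebra; the geometric content lives entirely in that elimination, and you neither perform it nor point to a mechanism that makes it tractable. As written there is also a miscount: a fourth circle tangent to all three given ones must satisfy \emph{three} conditions $|O_4 - O_i| = r_4 \pm r_i$, $i=1,2,3$ (three equations in the three unknowns: two coordinates of $O_4$ and $r_4$); with only two conditions you get a one-parameter family and cannot read off $r_4$. To close the gap you would need to actually run the computation --- e.g.\ place $O_1,O_2$ on an axis, use the three distance equations to solve for $O_4$ and $r_4$, and simplify using Heron's formula for the triangle with sides $r_i+r_j$ --- or else fully develop your Minkowski/inversive-coordinates alternative, where the Descartes relation does follow from a rank argument on the Gram matrix, but that setup (the bilinear form, the tangency condition, and why four mutually tangent circles force the stated quadratic relation) is likewise only gestured at. Either route works; neither is yet on the page.
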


For half-plane packings, we apply this Theorem in the case where one of the four circles is a line, i.e. has zero curvature. In this case, the quadratic relationship boils down to a linear relationship between the square roots of the curvatures of the circles.

\begin{cor}\label{descartesline}
Let $\alpha^2$, $\beta^2$ and $\gamma^2$ represent the curvatures of three mutually tangent circles all tangent to a line $L$, where $\alpha \ge \beta \ge 0$ and $\gamma \ge 0$. Then
\begin{equation}
\gamma = \alpha \pm \beta. \label{eqn1}
\end{equation}
In particular, when the circle of curvature $\gamma^2$ lies in the bounded interstice formed by the others, we have
\begin{equation}
\gamma = \alpha + \beta. \label{eqn2}
\end{equation}
\end{cor}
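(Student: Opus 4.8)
The plan is to feed the line $L$ into Descartes' Circle Theorem (Theorem~\ref{descartes}) as a fourth member of a mutually tangent quadruple, having curvature $0$, and then to extract the claimed linear relation by a factorization. Concretely, I would apply Theorem~\ref{descartes} with $w = \alpha^2$, $x = \beta^2$, $y = \gamma^2$, and $z = 0$, which is legitimate because the three given circles are mutually tangent and each is tangent to $L$, so all four circles are pairwise tangent. This gives
\[ 2(\alpha^4 + \beta^4 + \gamma^4) = (\alpha^2 + \beta^2 + \gamma^2)^2. \]
Expanding the right-hand side and cancelling leaves
\[ \alpha^4 + \beta^4 + \gamma^4 - 2\alpha^2\beta^2 - 2\beta^2\gamma^2 - 2\gamma^2\alpha^2 = 0. \]

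The key algebraic observation is that this last equation is exactly $(\gamma^2 - \alpha^2 - \beta^2)^2 = 4\alpha^2\beta^2$. Taking square roots — everything in sight is nonnegative, and we are given $\alpha \ge \beta \ge 0$ and $\gamma \ge 0$ — yields $\gamma^2 = \alpha^2 + \beta^2 \pm 2\alpha\beta = (\alpha \pm \beta)^2$, and hence $\gamma = \alpha + \beta$ or $\gamma = |\alpha - \beta| = \alpha - \beta$. This proves \eqref{eqn1}. This part is routine once the factorization is spotted.

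For \eqref{eqn2} I must pin down which sign corresponds to the bounded interstice, and this is the step I expect to be the real obstacle, since the Descartes computation is blind to the geometry and returns both roots symmetrically. My plan is to show that the circle filling the bounded interstice is strictly smaller than each of the other two, which forces the larger curvature $\gamma = \alpha + \beta$ (note $\alpha + \beta \ge \alpha > \alpha - \beta$ whenever $\beta > 0$). The size comparison I would obtain from the elementary fact that two circles of radii $r$ and $r'$ tangent to each other and to a common line meet that line at points a distance $2\sqrt{rr'}$ apart; this follows from the right triangle joining the two centers, with vertical leg $|r - r'|$, horizontal leg equal to the offset of the tangent points, and hypotenuse $r + r'$.

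Placing $X$ and $Y$ tangent to $L$ at $x_1 < x_2$ with radii $r_1,r_2$, the circle in the bounded interstice is tangent to $L$ at a point $x_3$ strictly between $x_1$ and $x_2$, with radius $r_3$. Comparing $|x_3 - x_1| = 2\sqrt{r_1 r_3}$ against $|x_2 - x_1| = 2\sqrt{r_1 r_2}$ gives $r_3 < r_2$, and the symmetric comparison gives $r_3 < r_1$. Converting radii back to curvatures via $\curv = 1/r$, this reads $\gamma > \alpha \ge \beta$, which is incompatible with $\gamma = \alpha - \beta$ and therefore forces $\gamma = \alpha + \beta$. The only point requiring care is the degenerate case $\beta = 0$, in which $Y$ is itself a line; there both signs collapse to $\gamma = \alpha$ and the claim holds trivially.
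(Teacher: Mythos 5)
Your proposal is correct and follows essentially the same route as the paper: substitute zero curvature for the line into Descartes' Circle Theorem to obtain $\gamma = \alpha \pm \beta$, then rule out the minus sign in the bounded interstice by observing that the circle there is smaller (hence more curved) than its two neighbors. The paper's proof merely asserts both steps in one line, so your explicit factorization and the $2\sqrt{rr'}$ tangency-point computation simply supply details the authors left to the reader.
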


\begin{proof}
The proof of (\ref{eqn1}) follows from Theorem \ref{descartes} by setting $w = 0$ and applying the quadratic formula appropriately. The proof of (\ref{eqn2}) follows from the fact that the circle in the bounded interstice has a curvature at least as large as that of the two circles surrounding it.
\end{proof}

We illustrate (\ref{eqn2}) in Figure \ref{fig:descartes}. Observe that the illustration of this equation is virtually identical to the illustration in Figure \ref{fig:labels}, which shows the recursive labelling process defined in \S\ref{sec:acp}. It is this linear relationship between the curvatures of tangent circles that inspires that labelling. A key consequence of this connection is Lemma \ref{curv} below.

\begin{figure}[h]
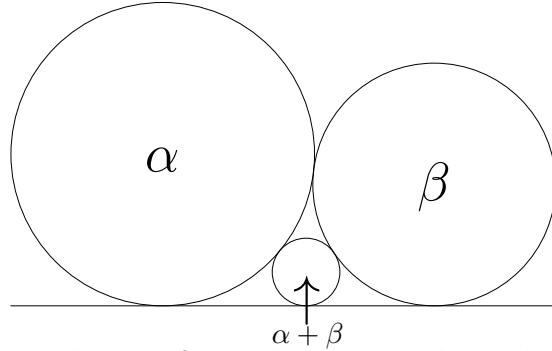

	\begin{overpic}[scale = .9]{triple_on_L.pdf}
	\put(55,55){\huge $\alpha$}
	\put(158,44){\huge $\beta$}
	\put(103,-10){$\alpha+\beta$}
	\put(111,1){\huge $\uparrow$}
	\put(0,-25){$\mbox{}$}
	\end{overpic}
	\caption{Descartes' Theorem for circles in $\PP_L$. The circles are labelled by the \emph{square roots} of their curvatures.}
 \label{fig:descartes}
\end{figure}

\begin{defn}
Consider now some fixed real number $\alpha > 0$ and recall the packing $\PP_\alpha$ from Definition \ref{palpha}. We write $\PP_{\alpha,L}$ for the set of circles in $\PP_{\alpha}$ that are tangent to the line $L$ (that is, the $x$-axis). Let $C_{(1,0)}$ denote the circle of curvature $\alpha^2$ that is tangent to $L$ at the origin, and let $C_{(0,1)}$ denote the circle of curvature $1$ that is tangent to $L$ and to $C_{(1,0)}$ on the right. As described in the previous section, these choices determine a unique label for each circle in $\PP_{\alpha,L}$. When we need to specify the underlying $\alpha$ we use a superscript, as in $C_{(a,b)}^{\alpha}$, but we often drop the $\alpha$ when context allows.
\end{defn}

\begin{lem}\label{curv}
The circle $\Cab$ in $\PP_{\alpha,L}$ has curvature given by
\[ \curv(C_{(a,b)}) = (a\alpha + b)^2. \]
Moreover, $a\alpha+b \geq 0$.
\end{lem}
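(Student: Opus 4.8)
The plan is to prove both claims simultaneously by induction on the generation of the circle $\Cab$, mirroring the recursive construction that defines the labelling. The two circles of generation zero, $C_{(1,0)}$ and $C_{(0,1)}$, have curvatures $\alpha^2$ and $1$ by the definition of $\PP_\alpha$, and indeed $(1\cdot\alpha + 0)^2 = \alpha^2$ and $(0\cdot\alpha + 1)^2 = 1$, with $\alpha \ge 0$ and $1 \ge 0$; this gives the base case and also verifies the nonnegativity claim there.

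For the inductive step I would treat circles in $\PP_{\alpha,L}^+$ and $\PP_{\alpha,L}^-$ slightly differently, since the recursive structure is cleanest in the bounded interstice. For $\Cab \in \PP_{\alpha,L}^+$ not equal to $X$ or $Y$, Lemma \ref{bounded-interstice} guarantees that $\Cab$ fills the bounded interstice of two circles $A,B \in \PP_{\alpha,L}^+$ of strictly smaller generation, and by \eqref{eqn:labeldefn} their labels sum to $(a,b)$; write $\vec{x}(A) = (a_1,b_1)$ and $\vec{x}(B) = (a_2,b_2)$ with $(a_1+a_2, b_1+b_2) = (a,b)$. By the induction hypothesis, $\curv(A) = (a_1\alpha+b_1)^2$ and $\curv(B) = (a_2\alpha+b_2)^2$, with both $a_1\alpha+b_1 \ge 0$ and $a_2\alpha+b_2 \ge 0$. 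Now I apply Corollary \ref{descartesline}: since $\Cab$ lies in the bounded interstice, equation \eqref{eqn2} tells us that the square root of its curvature equals the sum of the square roots of the curvatures of $A$ and $B$ (the nonnegativity of the two summands is exactly what lets me identify $\alpha_A + \alpha_B$, rather than $|\alpha_A - \alpha_B|$, as the relevant square root). Hence
\[ \sqrt{\curv(\Cab)} = (a_1\alpha+b_1) + (a_2\alpha+b_2) = (a_1+a_2)\alpha + (b_1+b_2) = a\alpha + b, \]
which is nonnegative as a sum of nonnegatives, giving both $\curv(\Cab) = (a\alpha+b)^2$ and $a\alpha + b \ge 0$.

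It remains to handle $\PP_{\alpha,L}^-$, and this is where I expect the only real subtlety. Here I would use the inversion map $\iota$ together with Lemma \ref{inversion}: any $C \in \PP_{\alpha,L}^-$ satisfies $\iota(C) = C_{(a',b')} \in \PP_{\alpha,L}^+$, and Lemma \ref{inversion} records that $\vec{x}(C)$ is either $(a',-b')$ or $(-a',b')$. The curvature formula for $\PP_{\alpha,L}^+$ is already established, but inversion does \emph{not} preserve curvature, so I cannot simply transport the value across $\iota$. Instead I would re-run the induction directly on $\PP_{\alpha,L}^-$, again invoking Lemma \ref{bounded-interstice}'s analogue for the unbounded interstice — that each such $C$ is built by filling an interstice of two earlier circles whose labels sum appropriately — and apply Corollary \ref{descartesline} as before. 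The point requiring care is the sign: for circles in the unbounded interstice, the Descartes relation \eqref{eqn1} can present either $\alpha + \beta$ or $\alpha - \beta$, so I must argue that the quantity $a\alpha + b$ obtained from the label arithmetic is the correct (nonnegative) square root. I would verify this by checking that the label-sum relation dictated by the recursive construction, combined with the induction hypothesis $a_i\alpha + b_i \ge 0$ on the generators, forces $a\alpha + b \ge 0$ and matches $\gamma = \alpha + \beta$ in \eqref{eqn2}; the nonnegativity assertion in the lemma is precisely the invariant that keeps this induction consistent across both interstices. This sign-tracking is the main obstacle, and establishing $a\alpha + b \ge 0$ as part of the inductive hypothesis (rather than as an afterthought) is what makes the argument go through cleanly.
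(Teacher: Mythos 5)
Your proposal is correct and follows essentially the same route as the paper: induction on generation, with the base case from the definition of $\PP_{\alpha}$ and the inductive step matching the additive label rule against the linearized Descartes relation of Corollary \ref{descartesline}. The paper treats the unbounded-interstice case exactly in the spirit you describe (the middle circle fills the bounded interstice of the new circle and the outer one, so \eqref{eqn2} expresses $\sqrt{\curv(C)}$ as a difference of the inductively known quantities, and nonnegativity is automatic because it is a square root of a curvature), simply remarking that this case is ``similar''; your detour through the inversion map and Lemma \ref{inversion} is unnecessary, but you correctly abandon it in favor of the direct argument.
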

\begin{proof}
The proof is by induction on the generation of $\Cab$ with respect to the generating triple $\{L,C_{(1,0)},C_{(0,1)}\}$. The result is immediately seen to hold for the generation zero circles $C_{(1,0)}$ and $C_{(0,1)}$, since they were chosen to satisfy $\curv(C_{(1,0)}) = \alpha^2$ and $\curv(C_{(0,1)}) = 1$.

Now suppose $\gen(\Cab) = n \ge 1$. The circle $\Cab$ was constructed to fill an interstice bounded by three circles of generation strictly less than $n$; since $\Cab$ is tangent to $L$, $L$ is necessarily one of those circles. Because the other two circles are tangent to $L$ as well, we can call them $C_{(a_1,b_1)}$ and $C_{(a_2,b_2)}$ with $C_{(a_1,b_1)} \prec C_{(a_2,b_2)}$. We do the case where $C_{(a,b)}$ fills the bounded interstice between $C_{(a_1,b_1)}$ and $C_{(a_2,b_2)}$. The case where it fills the unbounded interstice, either to the left or right, is similar.

By Definition \ref{labelling}, we have $a = a_1 + a_2$ and $b = b_1 + b_2$. By Corollary \ref{descartesline} then, we get
\begin{align*}
\sqrt{\curv(\Cab)} &= \sqrt{\curv(C_{(a_1,b_1)})} + \sqrt{\curv(C_{(a_2,b_2)})}\\
&= a_1\alpha + b_1 + a_2\alpha + b_2\\
&= a\alpha + b,
\end{align*}
where the second equality holds by induction.
\end{proof}

The following is an immediate consequence of Lemma \ref{curv}:

\begin{cor}\label{uniquecurv}
If $\alpha \not \in \qq$, then no two circles in $\PP_{\alpha}$ have the same curvature.
\end{cor}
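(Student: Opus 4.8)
The plan is to read the statement off the curvature formula of Lemma \ref{curv}. Suppose $\Cab$ and $\Ccd$ are two circles in $\PP_{\alpha,L}$ with $\curv(\Cab) = \curv(\Ccd)$. By Lemma \ref{curv} this says $(a\alpha+b)^2 = (c\alpha+d)^2$. The same lemma records that $a\alpha+b \ge 0$ and $c\alpha+d \ge 0$, so I may take the nonnegative square root of each side and conclude $a\alpha + b = c\alpha + d$, that is, $(a-c)\alpha = d-b$.

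Next I would use that $\alpha$ is irrational. Since $a-c$ and $d-b$ are integers, the relation $(a-c)\alpha = d-b$ can hold only when $a-c = 0$; otherwise $\alpha = (d-b)/(a-c) \in \qq$, contradicting the hypothesis. With $a = c$ the relation then forces $b = d$ as well, so the two circles carry the same coprime label. By the uniqueness part of Proposition \ref{uniquecircle}, a coprime pair labels at most one circle of $\PP_L$, and hence $\Cab = \Ccd$. Thus distinct circles tangent to $L$ have distinct curvatures.

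The one genuinely load-bearing ingredient is the sign information $a\alpha+b \ge 0$ from Lemma \ref{curv}, which is what lets me pass from $(a\alpha+b)^2 = (c\alpha+d)^2$ to $a\alpha+b = c\alpha+d$; without a sign constraint, equal squares would only give $a\alpha+b = \pm(c\alpha+d)$, and the passage to equal labels would not be immediate. The main obstacle I anticipate is one of scope: Lemma \ref{curv} supplies curvatures only for circles tangent to $L$, i.e. for $\PP_{\alpha,L}$, whereas the corollary is phrased for all of $\PP_{\alpha}$. For circles tangent to $L$ the argument above is genuinely immediate; to cover a circle that is \emph{not} tangent to $L$ one would need more than Lemma \ref{curv} alone --- for instance, using Descartes' Theorem (\ref{descartes}) to locate every curvature inside $\zz[\alpha]$ and then rule out coincidences --- so if the full statement over all of $\PP_{\alpha}$ is intended, this extension is the step that would require separate care.
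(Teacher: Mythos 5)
Your proof is correct and follows essentially the same route as the paper's: apply Lemma \ref{curv}, use the nonnegativity of $a\alpha+b$ to strip the squares, invoke irrationality of $\alpha$ to force equality of labels, and finish with the uniqueness statement of Proposition \ref{uniquecircle}. Your scope remark is well taken but does not distinguish your argument from the paper's, whose proof likewise only treats circles in $\PP_{\alpha,L}$ and leaves the extension to all of $\PP_{\alpha}$ implicit.
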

\begin{proof}
Suppose $\curv(\Cab) = \curv(C_{(a',b')})$. Then, by Lemma \ref{curv}, we have $a\alpha + b = a'\alpha + b'$. Since $\alpha$ is not rational, the only way for this equation to hold is for $a = a'$ and $b = b'$ which, by the uniqueness statement in Proposition \ref{uniquecircle}, implies that $\Cab = C_{(a',b')}$.
\end{proof}

We are now in a position to prove our first main result, identifying the set of similarities between the two packings $\PP_\alpha$ and $\PP_\beta$ when $\alpha,\beta$ are positive real numbers. This is Theorem \ref{thm1-intro} from the Introduction. We start by showing how to associate a matrix to such a similarity.

\begin{defn} \label{sigma}
Fix $\alpha,\beta > 0$ and let $\Phi$ be a similarity of $\rr^2$ such that $\Phi \cdot \PP_\beta = \PP_\alpha$. Also assume that $\Phi(L) = L$, where $L$ is the $x$-axis, that is, the chosen line in each packing. Then $\Phi$ takes mutually tangent circles in $\PP_{\beta,L}$ to mutually tangent circles in $\PP_{\alpha,L}$. In particular, we have
\[ \Phi(C_{(1,0)}^{\beta}) = C_{(a,b)}^{\alpha}, \quad \Phi(C_{(0,1)}^{\beta}) = C_{(c,d)}^{\alpha} \]
for some integers $a,b,c,d$. By Lemma \ref{det1},
\[ \begin{bmatrix}
a & b\\
c & d
\end{bmatrix} \]
is an integer matrix of determinant $\pm 1$. The determinant is $+1$ if $\Cab^{\alpha} \prec \Ccd^{\alpha}$, in which case $\Phi$ is orientation-preserving, and $-1$ if $\Phi$ is orientation-reversing. We denote this matrix by $\mathbf{A}(\Phi)$.
\end{defn}

\begin{thm}\label{linfracequiv++}
Let $\alpha, \beta > 0$. The construction $\mathbf{A}$ of Definition \ref{sigma} determines a bijection between the set of similarities of $\rr^2$ that take $\PP_{\beta}$ to $\PP_{\alpha}$ (and fix the $x$-axis) and the set of matrices $\begin{bmatrix} a & b \\ c & d \end{bmatrix}$ in $\PGL_2(\mathbb{Z})$ that satisfy
\[\beta = \frac{a\alpha + b}{c\alpha + d}. \]
Furthermore, the restriction of $\mathbf{A}$ to orientation-preserving similarities is a bijection onto the set of elements of $\PSL_2(\zz) = \SL_2(\zz) \big / \{\pm 1\}$ with this property.
\end{thm}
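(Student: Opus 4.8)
The plan is to show that the assignment $\Phi \mapsto \mathbf{A}(\Phi)$ of Definition \ref{sigma} is the asserted bijection by verifying three things: that $\mathbf{A}(\Phi)$ always satisfies $\beta = (a\alpha+b)/(c\alpha+d)$, that $\mathbf{A}$ is injective, and that every qualifying matrix arises as some $\mathbf{A}(\Phi)$. The orientation statement is essentially built into Definition \ref{sigma}, where the sign of the determinant is identified with the orientation of $\Phi$; once the main bijection is in hand, restricting to determinant $+1$ immediately yields the claimed bijection onto $\PSL_2(\zz)$.

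For the fractional-linear relation I would use that a similarity with scale factor $\mu$ multiplies every curvature by the fixed constant $\lambda = 1/\mu > 0$. Writing $\Phi(C_{(1,0)}^{\beta}) = \Cab^{\alpha}$ and $\Phi(C_{(0,1)}^{\beta}) = \Ccd^{\alpha}$ and applying Lemma \ref{curv} to both sides gives $(a\alpha+b)^2 = \lambda\beta^2$ and $(c\alpha+d)^2 = \lambda$. Dividing eliminates $\lambda$, and taking square roots—legitimate because Lemma \ref{curv} guarantees $a\alpha+b \ge 0$ and $c\alpha+d \ge 0$, while $\lambda > 0$ forces $c\alpha+d > 0$ and $\beta > 0$—yields $\beta = (a\alpha+b)/(c\alpha+d)$. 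Since $\mathbf{A}(\Phi)$ already has determinant $\pm 1$ by Lemma \ref{det1}, this shows $\mathbf{A}$ lands in the stated subset of $\PGL_2(\zz)$.

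Injectivity rests on Lemma \ref{3circ}. The triple $C_{(1,0)}^{\beta}, C_{(0,1)}^{\beta}, L$ has non-collinear centers (by the remark following that lemma, since only $L$ is a line), so any similarity fixing $L$ is completely determined by the images of $C_{(1,0)}^{\beta}$ and $C_{(0,1)}^{\beta}$; by Proposition \ref{uniquecircle} these images are in turn determined by their labels, i.e. by the matrix $\mathbf{A}(\Phi)$. Thus $\mathbf{A}$ is injective as a map into $\GL_2(\zz)$. To upgrade this to injectivity into $\PGL_2(\zz)$ I must rule out $\mathbf{A}(\Phi_1) = -\mathbf{A}(\Phi_2)$: this would force some pair $(a,b)$ and its negative $(-a,-b)$ both to occur as labels of circles in $\PP_{\alpha,L}$, contradicting the ``but not both'' clause of Proposition \ref{uniquecircle}.

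Surjectivity is where the real work—and the main obstacle—lies, namely the bookkeeping that separates $\GL_2(\zz)$ from $\PGL_2(\zz)$. Given a class in $\PGL_2(\zz)$ with $\beta = (a\alpha+b)/(c\alpha+d)$, I would first select the representative matrix for which both $a\alpha+b > 0$ and $c\alpha+d > 0$; this is possible because $\beta > 0$ forces numerator and denominator to share a sign, so negating the matrix if necessary makes both positive. Combining this strict positivity with Lemma \ref{curv} (the genuine label in each antipodal pair has nonnegative value $a\alpha+b$) and Proposition \ref{uniquecircle} (exactly one of $(a,b),(-a,-b)$ is a label), the genuine label in each pair is the one with positive value, so both rows of the chosen representative are genuine labels and the circles $\Cab^{\alpha}, \Ccd^{\alpha}$ exist in $\PP_{\alpha,L}$; having determinant $\pm 1$, they are tangent by Proposition \ref{uniquecircle} (directly if the determinant is $+1$, and after swapping rows if it is $-1$). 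Finally I would check that the triples $(C_{(1,0)}^{\beta}, C_{(0,1)}^{\beta}, L)$ and $(\Cab^{\alpha}, \Ccd^{\alpha}, L)$ have proportional curvatures—via $\curv(\Cab^{\alpha})/\curv(\Ccd^{\alpha}) = (a\alpha+b)^2/(c\alpha+d)^2 = \beta^2 = \curv(C_{(1,0)}^{\beta})/\curv(C_{(0,1)}^{\beta})$, with the two lines matching since both have curvature $0$—and invoke Lemma \ref{similar} together with Corollary \ref{samepack} to produce a similarity $\Phi$ carrying the first triple to the second, fixing $L$, with $\Phi \cdot \PP_{\beta} = \PP_{\alpha}$. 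By construction $\mathbf{A}(\Phi)$ is the chosen representative, and the sign of its determinant records the orientation of $\Phi$, giving simultaneously the bijection and its $\PSL_2(\zz)$ refinement.
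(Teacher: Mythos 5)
Your proposal is correct and follows essentially the same route as the paper's proof: the fractional-linear relation via Lemma \ref{curv} and the curvature scale factor, injectivity via Proposition \ref{uniquecircle} together with Lemma \ref{3circ}, and surjectivity by using the sign condition $a\alpha+b\ge 0$ of Lemma \ref{curv} to pick out the correct representative of the $\PGL_2(\zz)$ class and then invoking Lemma \ref{similar}. The only cosmetic difference is that you normalize the matrix so that both $a\alpha+b$ and $c\alpha+d$ are positive at the outset, whereas the paper assumes $(a,b)$ is a genuine label and then deduces the same for $(c,d)$; these are equivalent.
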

\begin{proof}
We first show that $\mathbf{A}(\Phi)$ satisfies the condition that $\beta = \dfrac{a\alpha + b}{c\alpha + d}$. Because $\Phi$ is a similarity, there exists some $\lambda > 0$ such that $\curv(\Phi(C)) = \lambda \curv(C)$ for all $C \in \PP_{\beta}$. Since $\curv(C_{(1,0)}^{\beta}) = \beta^2$ and $\curv(C_{(0,1)}^{\beta}) = 1$, it follows that $\curv(\Cab^{\alpha}) = \lambda \beta^2$ and $\curv(\Ccd^{\alpha}) = \lambda$.
By taking square roots and applying Lemma \ref{curv}, we may conclude that
\begin{align*}
a\alpha + b &= \sqrt{\lambda} \beta\\
c\alpha + d &= \sqrt{\lambda},
\end{align*}
which we may rewrite as $\dfrac{a\alpha + b}{c\alpha + d} = \beta$.

To show that $\mathbf{A}$ is injective, suppose $\mathbf{A}(\Phi) = \mathbf{A}(\Phi')$ in $\PGL_2(\zz)$. Then, if $\Phi(C_{(1,0)}^{\beta}) = C_{(a,b)}^{\alpha}$ and $\Phi'(C_{(1,0)}^{\beta}) = C_{(a',b')}^{\alpha}$, we must have $(a,b) = \pm(a',b')$. But, by Proposition \ref{uniquecircle}, only one of $(a,b)$ and $(-a,-b)$ is the label of a circle in $\PP_\alpha$. Therefore, in fact $(a,b) = (a',b')$ and so $\Phi(C_{(1,0)}^{\beta}) = \Phi'(C_{(1,0)}^{\beta})$. Similarly $\Phi(C_{(0,1)}^{\beta}) = \Phi'(C_{(0,1)}^{\beta})$. Since also $\Phi(L) = \Phi'(L)$, Lemma \ref{3circ} tells us that $\Phi = \Phi'$.

Now let
$\begin{bmatrix}
a & b\\
c & d
\end{bmatrix}$
be an element of $\PGL_2(\zz)$ such that $\beta = \dfrac{a\alpha + b}{c\alpha + d}$. Because the determinant of this matrix is $\pm 1$, Proposition \ref{uniquecircle} tells us that either $(a,b)$ or $(-a,-b)$, but not both, is the label of a circle in $\PP_{\alpha,L}$, and that the same holds for $(\pm c, \pm d)$. Furthermore, these circles are tangent. Because we can multiply the matrix by $-1$ and not change it in $\PGL_2(\zz)$, we may assume that $(a,b)$ is the label for a circle in $\PP_{\alpha,L}$. Now we need to show that $(c,d)$ is also the label of a circle in $\PP_{\alpha,L}$. Suppose that $(-c,-d)$, rather than $(c,d)$, is a label in $\PP_{\alpha,L}$. Lemma \ref{curv} tells us that $a\alpha + b \ge 0$. This statement, along with the fact that $\dfrac{a\alpha + b}{c\alpha + d} = \beta > 0$, implies that $c\alpha + d > 0$, and therefore $(-c)\alpha + (-d) < 0$, which contradicts Lemma \ref{curv} applied to the circle $C_{(-c,-d)}^{\alpha}$. We may therefore conclude that $\Cab^{\alpha}$ and $\Ccd^{\alpha}$ form a pair of tangent circles in $\PP_{\alpha,L}$. Then, since the ratio of the curvatures of $\Cab^{\alpha}$ and $\Ccd^{\alpha}$ is
\[ \frac{(a\alpha + b)^2}{(c\alpha + d)^2} = \beta^2 \]
which is the same as the ratio of the curvatures of $C_{(1,0)}^{\beta}$ and $C_{(0,1)}^{\beta}$, Lemma \ref{similar} determines a similarity $\Phi$ between $\PP_{\alpha}$ and $\PP_{\beta}$ such that
\[ \mathbf{A}(\Phi) = \begin{bmatrix} a & b\\ c & d \end{bmatrix}. \]

This shows that $\mathbf{A}$ is a bijection. We have already noted that $\mathbf{A}(\Phi) \in \PSL_2(\zz)$ if and only if $\Phi$ is orientation-preserving, which gives us the last part of the Theorem.
\end{proof}

We have the following corollaries.

\begin{cor}\label{strip}
$\PP_{\alpha}$ is a strip packing if and only if $\alpha \in \qq^+$.
\end{cor}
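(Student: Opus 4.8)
The plan is to translate the geometric notion ``contains a second line'' into the arithmetic of the labels via Lemma \ref{curv}. By that lemma, a circle $\Cab \in \PP_{\alpha,L}$ has curvature $(a\alpha+b)^2$, and a circle is a straight line precisely when its curvature is zero. Hence $\PP_{\alpha,L}$ contains a line (other than $L$ itself) if and only if the equation $a\alpha + b = 0$ has a solution in coprime integers $(a,b)$. Since $a = 0$ would force $b = 0$ and violate coprimality, we must have $a \neq 0$, so such a solution exists exactly when $\alpha = -b/a$ is rational; as $\alpha > 0$, this is the condition $\alpha \in \qq^+$. This single observation drives both implications.

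For the forward direction I would assume $\PP_\alpha$ is a strip packing, so it contains a second line $L'$. Two distinct lines in a packing with disjoint interiors cannot meet, so $L'$ is parallel to $L$ and is therefore tangent to it (at infinity); consequently $L' \in \PP_{\alpha,L}$ and carries a coprime label $(a,b)$. Because $L'$ is a line, Lemma \ref{curv} gives $a\alpha + b = 0$, and the elementary argument above yields $\alpha \in \qq^+$.

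For the converse I would write $\alpha = p/q$ in lowest terms with $p,q > 0$ and examine the coprime pair $(q,-p)$. By Proposition \ref{uniquecircle}, exactly one of $(q,-p)$ and $(-q,p)$ is the label of a circle $C \in \PP_{\alpha,L}$; in either case Lemma \ref{curv} gives $\curv(C) = \big(\pm(q\alpha - p)\big)^2 = 0$, so $C$ is a line. Since $C$ is tangent to $L$ but is not $L$ (as $L \notin \PP_{\alpha,L}$), it is a second line parallel to $L$, and one concludes that $\PP_\alpha$ is a strip packing.

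The only point beyond routine bookkeeping is justifying that the presence of a second parallel line genuinely makes $\PP_\alpha$ a \emph{strip} packing in the sense of the introduction, i.e. that all remaining circles lie in the strip bounded by $L$ and $L'$. I expect this to be the main thing worth spelling out: it follows from the disjoint-interiors condition, since orienting $L'$ so that its interior is the half-plane \emph{not} containing the packing forces every other circle of $\PP_\alpha$ to lie between $L$ and $L'$. The algebraic heart of the statement, namely the equivalence of $a\alpha + b = 0$ with $\alpha \in \qq^+$, is immediate from Lemma \ref{curv} together with Proposition \ref{uniquecircle}.
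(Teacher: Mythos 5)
Your proof is correct, and it takes a genuinely different route from the paper's. The paper observes that $\PP_{\alpha}$ is a strip packing if and only if it is similar to $\PP_1$, and then applies Theorem \ref{linfracequiv++}: such a similarity exists if and only if $\alpha = \frac{a+b}{c+d}$ for some integer matrix of determinant $\pm 1$, which forces $\alpha$ rational and, conversely, is solvable for any $\alpha = p/q$ by a B\'ezout computation. You instead detect the second line intrinsically: by Lemma \ref{curv} a circle $\Cab \in \PP_{\alpha,L}$ is a line precisely when $a\alpha + b = 0$, and by Proposition \ref{uniquecircle} every coprime pair $\pm(a,b)$ labels some circle in $\PP_{\alpha,L}$, so a second line exists exactly when $a\alpha + b = 0$ has a coprime integer solution, i.e. exactly when $\alpha \in \qq^+$. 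Both arguments are complete and comparably short. Yours has two small advantages: it avoids the (easy but unstated) facts that $\PP_1$ is itself a strip packing and that the strip property is a similarity invariant, both implicit in the paper's opening sentence; and it exhibits the label of the second line explicitly as $\pm(q,-p)$ for $\alpha = p/q$, a fact that resurfaces later in Lemma \ref{lem:convergents} and in the termination of the circle replacement algorithm. The point you flag at the end --- that a second parallel line makes $\PP_{\alpha}$ a strip packing in the sense of the introduction --- is essentially definitional, since the requirement that the remaining circles lie between the two lines is automatic from the disjoint-interiors condition, exactly as you say. The paper's version, in exchange for relying on the heavier Theorem \ref{linfracequiv++}, ties the corollary directly to the similarity classification that is the theme of the section.
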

\begin{proof}
First, we note that $\PP_{\alpha}$ is a strip packing if and only if it is similar to the packing $\PP_1$. By Theorem \ref{linfracequiv++}, this is true if and only if there is an integer matrix
$\begin{bmatrix}
a & b\\
c & d
\end{bmatrix}$
of determinant $\pm 1$ such that $\dfrac{a\cdot 1 + b}{c\cdot 1 + d} = \dfrac{a+b}{c+d} = \alpha$. Certainly, if such a matrix exists, then $\alpha$ is rational. Conversely, suppose $\alpha = \frac{p}{q}$ with $p,q > 0$ and $\gcd(p,q) = 1$. Let $a$ and $c$ be positive integers that satisfy $aq - cp = 1$, and set $b = p - a$, $d = q - c$. By construction,
$\begin{vmatrix}
a & b\\
c & d
\end{vmatrix}
= 1$ and $\dfrac{a+b}{c+d} = \dfrac{p}{q} = \alpha$.
\end{proof}

\begin{cor}\label{quad}
If $\PP_{\alpha}$ is self-similar, then $\alpha$ is the root of a quadratic polynomial with rational coefficients.
\end{cor}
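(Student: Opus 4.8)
The plan is to feed a nontrivial self-similarity of $\PP_{\alpha}$ into Theorem \ref{linfracequiv++} with $\beta = \alpha$, which converts the existence of such a similarity directly into an algebraic relation satisfied by $\alpha$. The only preliminary point to settle is that the self-similarity we use actually fixes the line $L$, so that the theorem is applicable.

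First I would dispose of the case in which $\PP_{\alpha}$ is a strip packing. By Corollary \ref{strip} this happens exactly when $\alpha \in \qq^+$, and a rational number is certainly a root of a quadratic polynomial with rational coefficients (for instance $(x-\alpha)(x-\alpha)$), so there is nothing further to prove in this case. I handle it separately precisely because in a strip packing a self-similarity may interchange the two boundary lines rather than fix $L$, so Theorem \ref{linfracequiv++} would not apply directly.

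Now suppose $\PP_{\alpha}$ is not a strip packing. Then $L$ is the unique line among the circles of $\PP_{\alpha}$, since a half-plane packing contains either a single line or, in the strip case, two parallel lines. As every similarity carries lines to lines, any $\Phi \in \Symm(\PP_{\alpha})$ must satisfy $\Phi(L) = L$. Because $\PP_{\alpha}$ is self-similar, I may choose a nontrivial $\Phi \in \Symm(\PP_{\alpha})$; by the above it fixes $L$, so Theorem \ref{linfracequiv++} (with $\beta = \alpha$) associates to it a matrix $\begin{bmatrix} a & b \\ c & d \end{bmatrix} \in \PGL_2(\zz)$ with
\[ \frac{a\alpha + b}{c\alpha + d} = \alpha. \]
Clearing the denominator gives the relation $c\alpha^2 + (d-a)\alpha - b = 0$, a polynomial identity of degree at most two with integer coefficients.

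It remains to check that this polynomial is not identically zero. If $c$, $d-a$, and $b$ all vanished, the matrix would be $\begin{bmatrix} a & 0 \\ 0 & a \end{bmatrix}$ with $a^2 = \det = \pm 1$, forcing $a = \pm 1$, so the matrix is $\pm I$, i.e.\ the identity of $\PGL_2(\zz)$; by the bijectivity asserted in Theorem \ref{linfracequiv++}, the only similarity mapping to the identity matrix is the identity similarity, contradicting the nontriviality of $\Phi$. Hence the coefficients are not all zero, and $\alpha$ is a root of a nonzero polynomial of degree at most two with rational coefficients, as claimed. The only genuinely delicate point in the argument is the reduction guaranteeing that $\Phi$ fixes $L$ in the non-strip case; once that is secured, everything else is a direct reading of the matrix identity.
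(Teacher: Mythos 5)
Your proposal is correct and follows essentially the same route as the paper: feed a nontrivial self-similarity into Theorem \ref{linfracequiv++} with $\beta=\alpha$, read off the relation $c\alpha^2+(d-a)\alpha-b=0$, and rule out the zero polynomial by noting the matrix would have to be $\pm I$, contradicting injectivity of $\mathbf{A}$. Your separate treatment of the strip case (to guarantee $\Phi(L)=L$ before invoking the theorem) is a point the paper's own proof glosses over and only makes explicit later, in Proposition \ref{isomstab}, so that extra care is welcome but not a different argument.
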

\begin{proof}
If $\PP_{\alpha}$ is self-similar, then there is a nontrivial similarity $\Phi$ that maps $\PP_{\alpha}$ to itself. By Theorem \ref{linfracequiv++}, this corresponds to a nontrivial element
$\begin{bmatrix}
a & b\\
c & d
\end{bmatrix}
\in \PGL_2(\zz)$ such that $\dfrac{a\alpha + b}{c\alpha + d} = \alpha$; i.e., such that
\begin{equation*}
c\alpha^2 + (d-a)\alpha - b = 0.
\end{equation*}
It is easy to check that the only way for all three coefficients to be zero is for $a = d = \pm 1$, $b = c = 0$, which contradicts the fact that the matrix
$\begin{bmatrix}
a & b\\
c & d
\end{bmatrix}$
 is not the identity in $\PGL_2(\zz)$.
\end{proof}

The rest of this section is concerned with proving the converse of Corollary \ref{quad}: if $\alpha$ is the root of a quadratic polynomial with rational coefficients, then $\PP_{\alpha}$ is self-similar. This follows from Theorem \ref{thm} below, which is Theorem \ref{symmgroup} of the Introduction.

Theorem \ref{thm} goes beyond identifying which packings are self-similar. We in fact calculate the self-similarity groups of all the packings. To do this we first show that, in the case $\beta = \alpha$, the bijection of Theorem \ref{linfracequiv++} is a group isomorphism. This is the content of the following proposition.

\begin{prop}\label{isomstab}
Let $\alpha > 0$ be irrational. Then there are group isomorphisms
\begin{align*}
\Symm(\PP_{\alpha}) &\cong \Stab(\alpha) := \left\{\left.\begin{bmatrix} a & b \\ c & d \end{bmatrix} \in \PGL_2(\zz) \; \right| \; \frac{a\alpha + b}{c\alpha + d} = \alpha \right\} \mbox{, and}\\
\Symm_+(\PP_{\alpha}) &\cong \Stab_+(\alpha) := \left\{\left.\begin{bmatrix} a & b \\ c & d \end{bmatrix} \in \PSL_2(\zz) \; \right| \; \frac{a\alpha + b}{c\alpha + d} = \alpha \right\}.
\end{align*}
\end{prop}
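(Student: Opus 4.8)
The plan is to upgrade the bijection $\mathbf{A}$ of Theorem \ref{linfracequiv++} from a mere bijection of sets to a group isomorphism in the special case $\beta = \alpha$. Setting $\beta = \alpha$ in that theorem, the set of similarities taking $\PP_\alpha$ to itself (and fixing $L$) is exactly $\Symm(\PP_\alpha)$, because every self-similarity must send the line $L$ to a line in $\PP_\alpha$, and since $\alpha$ is irrational, $\PP_\alpha$ is not a strip packing by Corollary \ref{strip}, so $L$ is its \emph{only} line and hence is necessarily fixed. Meanwhile the target set of matrices $\begin{bmatrix} a & b \\ c & d \end{bmatrix} \in \PGL_2(\zz)$ satisfying $\frac{a\alpha+b}{c\alpha+d} = \alpha$ is precisely $\Stab(\alpha)$. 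So Theorem \ref{linfracequiv++} already hands us a bijection $\mathbf{A} : \Symm(\PP_\alpha) \to \Stab(\alpha)$, and the only thing left to check is that it respects the group operations.

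First I would confirm that both sides are genuinely groups. That $\Stab(\alpha)$ is a subgroup of $\PGL_2(\zz)$ amounts to noting that the formula $M \mapsto \frac{a\alpha+b}{c\alpha+d}$ defines the standard action of $\PGL_2(\zz)$ by fractional linear transformations on $\rr \cup \{\infty\}$, so $\Stab(\alpha)$ is literally the point-stabilizer of $\alpha$ under this action, hence a subgroup; one should briefly check the action is well-defined on $\PGL_2(\zz)$ (replacing a matrix by its negative leaves the fraction unchanged) and is a genuine left action (so that composition of matrices corresponds to composition of the transformations). That $\Symm(\PP_\alpha)$ is a group is immediate from its definition. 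The subgroups $\Symm_+$ and $\Stab_+$ are cut out by the orientation-preserving/determinant-one condition, which is multiplicative, so they are subgroups as well.

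The heart of the argument is showing $\mathbf{A}(\Phi \circ \Phi') = \mathbf{A}(\Phi) \cdot \mathbf{A}(\Phi')$. I would argue directly from the defining property of $\mathbf{A}$. Write $\mathbf{A}(\Phi') = \begin{bmatrix} a' & b' \\ c' & d' \end{bmatrix}$, meaning $\Phi'(C_{(1,0)}^\alpha) = C_{(a',b')}^\alpha$ and $\Phi'(C_{(0,1)}^\alpha) = C_{(c',d')}^\alpha$. The key observation is a relabelling/functoriality fact: for a self-similarity $\Phi$ with $\mathbf{A}(\Phi) = M$, the image $\Phi(C_{(p,q)}^\alpha)$ has label obtained by applying $M$ to $(p,q)$ (as a row vector, up to the $\pm$ ambiguity resolved by Proposition \ref{uniquecircle}). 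This holds because $\Phi$ preserves tangency and the generation structure, so it must carry the recursively-defined labelling to the labelling transported by the linear map $M$ — formally one checks this by induction on generation, using that the label addition rule \eqref{eqn:labeldefn} is preserved by the linear action of $M$, with the base case being exactly the definition of $\mathbf{A}(\Phi)$ on $C_{(1,0)}$ and $C_{(0,1)}$. Granting this, $(\Phi \circ \Phi')(C_{(1,0)}^\alpha) = \Phi(C_{(a',b')}^\alpha)$ has label $(a',b') \cdot \mathbf{A}(\Phi)$, which is the first row of the product matrix, and similarly for $C_{(0,1)}$; comparing with the definition of $\mathbf{A}(\Phi \circ \Phi')$ gives the homomorphism property. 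Since $\mathbf{A}$ is already a bijection and now a homomorphism, it is an isomorphism, and it restricts to an isomorphism $\Symm_+(\PP_\alpha) \cong \Stab_+(\alpha)$ because orientation-preserving corresponds to determinant $+1$ on both sides.

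The main obstacle I anticipate is pinning down the functoriality claim cleanly, namely that $\Phi$ acts on \emph{all} labels by the matrix $\mathbf{A}(\Phi)$ and not merely on the two generating labels, together with carefully tracking the $\pm$ sign ambiguity in $\PGL_2$ versus the unambiguous labels of actual circles. The sign issue is what forces the use of $\PGL_2(\zz)$ rather than $\GL_2(\zz)$, and one must invoke Proposition \ref{uniquecircle} at the right moments to see that, for each circle, exactly one of the two candidate sign choices is an honest label, so the induction is consistent. A cleaner alternative, which I would consider if the inductive bookkeeping gets heavy, is to avoid labels entirely and argue abstractly: transport the group structure through $\mathbf{A}$ by showing that the fractional-linear action of $\PGL_2(\zz)$ is a left action and that $\mathbf{A}$ intertwines composition of similarities with matrix multiplication via the general version of Theorem \ref{linfracequiv++} — but the label-based induction is the most self-contained route given the tools already developed in \S\ref{sec:acp}.
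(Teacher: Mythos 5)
Your proposal is correct, but it proves the homomorphism property by a genuinely different route than the paper. The paper's argument is metric: it uses the multiplicativity of scale factors under composition, the formula $\curv(\Phi(C_{(0,1)}))=(c\alpha+d)^2$ from Lemma \ref{curv}, the algebraic identity $(c\alpha+d)(c'\alpha+d')=(ca'+dc')\alpha+(cb'+dd')$ (which is where the stabilizer condition enters), and finally Corollary \ref{uniquecurv} (irrationality implies curvatures determine circles) to convert the curvature computation back into a statement about labels. Your argument is combinatorial: you prove the stronger ``functoriality'' statement that $\vec{x}(\Phi(C))=\vec{x}(C)\,\mathbf{A}(\Phi)$ for \emph{every} $C\in\PP_{\alpha,L}$, by induction on generation, and then read off the homomorphism property. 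This buys a more conceptual explanation of why $\mathbf{A}$ is multiplicative (it is the action on a basis of a genuinely linear action on all labels) and avoids any appeal to Corollary \ref{uniquecurv}; the cost is the inductive bookkeeping you anticipate. Two points need care in your route. First, the induction requires the additivity rule $\vec{x}(C)=\vec{x}(A)+\vec{x}(B)$ to hold for the circle filling the bounded interstice of an \emph{arbitrary} tangent pair $A\prec B$ in $\PP_L$, not just for the pairs arising in the recursive construction from the fixed generators $C_{(1,0)},C_{(0,1)}$ (since $\Phi$ transports the recursion to the one rooted at $\Phi(C_{(1,0)}),\Phi(C_{(0,1)})$). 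This is true and follows from Proposition \ref{uniquecircle}: the two determinant conditions $\left|\begin{smallmatrix} a & b\\ e & f\end{smallmatrix}\right|=\left|\begin{smallmatrix} e & f\\ c & d\end{smallmatrix}\right|=1$ have the unique solution $(e,f)=(a+c,b+d)$, but it should be stated. (Once you fix the honest integer representative of $\mathbf{A}(\Phi)$ whose rows are the actual labels of $\Phi(C_{(1,0)})$ and $\Phi(C_{(0,1)})$, the induction in fact produces the exact label with no residual sign choice, so the $\pm$ bookkeeping you worry about largely evaporates.)

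Second, watch the order of multiplication: with the row-vector convention, your computation gives $\vec{x}((\Phi\circ\Phi')(C))=\vec{x}(C)\,\mathbf{A}(\Phi')\,\mathbf{A}(\Phi)$, i.e.\ $\mathbf{A}(\Phi\circ\Phi')=\mathbf{A}(\Phi')\mathbf{A}(\Phi)$, which is a priori an \emph{anti}-homomorphism, whereas the paper proves $\mathbf{A}(\Phi\circ\Phi')=\mathbf{A}(\Phi)\mathbf{A}(\Phi')$. This does not damage the Proposition --- a bijective anti-homomorphism still yields a group isomorphism, and in fact the two products coincide here because any two elements of $\Stab(\alpha)$ commute (they are both matrices whose fractional-linear action fixes the irrational point $\alpha$, hence, as one checks directly or sees from the later identification with $\GG/\{\pm1\}$, the stabilizer is abelian) --- but a careful write-up should acknowledge and resolve the discrepancy rather than silently calling $(a',b')\cdot\mathbf{A}(\Phi)$ ``the first row of the product matrix.''
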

\begin{proof}
Since $\alpha$ is not rational by assumption, Corollary \ref{strip} tell us that  $\PP_{\alpha}$ is not a strip packing, so the $x$-axis $L$ is the unique line in the packing $\PP_{\alpha}$. Therefore \emph{every} self-similarity of $\PP_{\alpha}$ maps $L$ to $L$. If we take $\beta = \alpha$, then Theorem \ref{linfracequiv++} states precisely that we have bijections of the above forms given by the construction $\mathbf{A}$. Now we show that when $\beta = \alpha$, these bijections are group isomorphisms.

Let $\Phi$ and $\Phi'$ be elements of $\Symm(\PP_{\alpha})$. Say
$$\mathbf{A}(\Phi) =
\begin{bmatrix}
a & b\\
c & d
\end{bmatrix}
\mbox{ \ \ and \ \ }
\mathbf{A}(\Phi') =
\begin{bmatrix}
a' & b'\\
c' & d'
\end{bmatrix}.$$
We must show that $\mathbf{A}(\Phi \circ \Phi') =
\begin{bmatrix}
a & b\\
c & d
\end{bmatrix}
\begin{bmatrix}
a' & b'\\
c' & d'
\end{bmatrix}
=
\begin{bmatrix}
aa' + bc' & ab' + bd'\\
ca' + dc' & cb' + dd'
\end{bmatrix}$.
By Corollary \ref{uniquecurv} and Lemma \ref{3circ}, it will suffice to show
\begin{align}
\curv ((\Phi \circ \Phi')(C_{(1,0)})) &= ((aa' + bc')\alpha + (ab' + bd'))^2 \mbox{ and } \label{eqn3}\\
\curv ((\Phi \circ \Phi')(C_{(0,1)})) &= ((ca' + dc')\alpha + (cb' + dd'))^2. \label{eqn4}
\end{align}
In fact, it will suffice to show only that (\ref{eqn4}) holds: since $\Phi \circ \Phi'$ is a similarity, we must have $\curv((\Phi \circ \Phi')(C_{(1,0)})) = \lambda\curv(C_{(1,0)}) = \lambda\alpha^2$ and $\curv((\Phi \circ \Phi')(C_{(0,1)})) = \lambda\curv(C_{(0,1)}) = \lambda$ for some $\lambda > 0$; therefore, if (\ref{eqn4}) is satisfied, the fact that the product of the matrices is still an element of $\Stab(\alpha)$ will force (\ref{eqn3}).

The scale factor $\lambda$ under the composition $\Phi \circ \Phi'$ is the product of the scale factors $\mu$ and $\mu'$ under the maps $\Phi$ and $\Phi'$ respectively. Since $\curv(C_{(0,1)}) = 1$, it follows that
\begin{align*}
\mu &= \curv(\Phi(C_{(0,1)})) = \curv(\Ccd) = (c\alpha + d)^2 \mbox{ and }\\
\mu' &= \curv(\Phi'(C_{(0,1)})) = \curv(C_{(c',d')}) = (c'\alpha + d')^2.
\end{align*}
Therefore we will have shown that $\mathbf{A}$ is a group homomorphism if we can show that
\begin{equation*}
(c\alpha + d)(c'\alpha + d') = (ca' + dc')\alpha + (cb' + dd').
\end{equation*}
Indeed,
\begin{align*}
\frac{a'\alpha + b'}{c'\alpha + d'} = \alpha &\implies c'\alpha^2 + (d' - a')\alpha - b' = 0\\
&\implies cc'\alpha^2 + (cd' - ca')\alpha - cb' = 0\\
&\implies cc'\alpha^2 + (cd' + dc' - ca' - dc')\alpha - cb' = 0\\
&\implies cc'\alpha^2 + (cd' + dc')\alpha = (ca' + dc')\alpha + cb'\\
&\implies cc'\alpha^2 + (cd' + dc')\alpha + dd' = (ca' + dc')\alpha + (cb' + dd')\\
&\implies (c\alpha + d)(c'\alpha + d') = (ca' + dc')\alpha + (cb' + dd').
\end{align*}
\end{proof}

Now we can prove the main result of this section.

\begin{thm} \label{thm}
Let $\alpha > 0$. Then:
\begin{enumerate}
  \item If $\alpha \in \mathbb{Q}$, then $\PP_{\alpha}$ is a strip packing and
  \[ \Symm(\PP_{\alpha}) \cong D_{\infty} \times \mathbb{Z}/2\mathbb{Z} \]
  with subgroup
  \[ \Symm_+(\PP_{\alpha})\cong D_{\infty}, \]
  where $D_{\infty}$ denotes the infinite dihedral group.
  \item If $\alpha$ is quadratic over $\mathbb{Q}$, then
  \[ \Symm(\PP_{\alpha}) \cong \mathbb{Z}. \]
  Let $D$ denote the discriminant of the primitive integral polynomial with root $\alpha$. Then the subgroup $\Symm_+(\PP_{\alpha})$ is:
  \begin{itemize}
    \item equal to $\Symm(\PP_{\alpha})$ if the Pell equation $x^2 - Dy^2 = -4$ has no integral solution for $(x,y)$;
    \item the index $2$ subgroup of $\Symm(\PP_{\alpha})$ if $x^2 - Dy^2 = -4$ does have an integral solution.
  \end{itemize}
  \item Otherwise
  \[ \Symm_+(\PP_{\alpha}) = \Symm(\PP_{\alpha}) = 1. \]
\end{enumerate}
\end{thm}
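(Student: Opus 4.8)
The plan is to treat the three cases separately, with case (ii) as the heart of the matter. Throughout I use that, by Proposition \ref{isomstab}, for irrational $\alpha$ the self-similarity groups are identified with stabilizers of $\alpha$ in $\PGL_2(\zz)$ and $\PSL_2(\zz)$. Case (iii) is then immediate: if $\alpha$ is neither rational nor quadratic over $\qq$, then Corollary \ref{quad} (in contrapositive) gives that $\PP_{\alpha}$ is not self-similar, so $\Symm(\PP_{\alpha}) = 1$ and a fortiori $\Symm_+(\PP_{\alpha}) = 1$.

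For case (ii), since $\alpha$ is a quadratic irrational, Proposition \ref{isomstab} reduces everything to computing $\Stab(\alpha)$ and $\Stab_+(\alpha)$. A matrix $\begin{bmatrix} a & b \\ c & d \end{bmatrix}$ lies in $\Stab(\alpha)$ iff $c\alpha^2 + (d-a)\alpha - b = 0$; writing $P(t) = At^2 + Bt + C$ for the primitive integral polynomial with root $\alpha$ (so $D = B^2 - 4AC$), this forces $(c, d-a, -b) = k(A,B,C)$ for a unique $k \in \zz$. Setting $s := a + d$, a direct computation gives $\det = (s^2 - Dk^2)/4$, so the determinant-$\pm 1$ condition becomes $s^2 - Dk^2 = \pm 4$, with $\det = +1$ (resp. $-1$) according as the sign is $+$ (resp. $-$). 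I would then note $M\begin{bmatrix} \alpha \\ 1 \end{bmatrix} = (c\alpha + d)\begin{bmatrix} \alpha \\ 1 \end{bmatrix}$, so the eigenvalue $\eta := c\alpha + d = (s + k\sqrt{D})/2$ is a unit of norm $\det$ in the quadratic order $\mathcal{O}_D$ of discriminant $D$, and that $M \mapsto \eta$ is multiplicative. Passing to $\PGL_2(\zz)$, i.e. identifying $M$ with $-M$ and hence $\eta$ with $-\eta$, yields $\Stab(\alpha) \cong \mathcal{O}_D^{\times}/\{\pm 1\} \cong \zz$, the final step being Dirichlet's unit theorem (here $D > 0$ is not a square). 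Under this isomorphism $\Stab_+(\alpha)$ is the image of the norm-$(+1)$ units, which is all of $\Stab(\alpha)$ exactly when the fundamental unit has norm $+1$ and is index $2$ otherwise; and the fundamental unit has norm $-1$ iff some unit does, iff $s^2 - Dk^2 = -4$ is solvable, which is precisely the stated Pell dichotomy.

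For case (i), Corollary \ref{strip} says $\PP_{\alpha}$ is a strip packing, and by Theorem \ref{linfracequiv++} all strip packings $\PP_{\alpha}$ with $\alpha \in \qq^+$ are similar to $\PP_1$; since similar packings have conjugate self-similarity groups, it suffices to compute $\Symm(\PP_1)$ geometrically. The packing $\PP_1$ contains exactly two lines, $L$ and a parallel line $L'$, so any self-similarity permutes $\{L, L'\}$, giving a homomorphism $\rho : \Symm(\PP_1) \to \zz/2\zz$. An element of $\ker \rho$ fixes both $L$ and $L'$, hence preserves their distance and is an isometry $(x,y) \mapsto (\pm x + t, y)$; preserving $\PP_1$ restricts $t$ to the lattice of periods and gives $\ker \rho \cong D_{\infty}$, generated by the minimal translation $T$ along the strip and a vertical reflection $V$ with $VTV = T^{-1}$. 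The reflection $H : (x,y) \mapsto (x, 2-y)$ across the midline fixes the two generating circles and exchanges $L$ and $L'$, so preserves $\PP_1$ by Corollary \ref{samepack} and shows $\rho$ is surjective. As $H$ is a central involution with $\langle H \rangle \cap \ker\rho = 1$, we get $\Symm(\PP_1) \cong D_{\infty} \times \zz/2\zz$. Finally $T$ is orientation-preserving while $V$ and $H$ are orientation-reversing, so the orientation-preserving elements form $\langle T, VH \rangle$; since $VH$ is an orientation-preserving involution inverting $T$, this is $\Symm_+(\PP_1) \cong D_{\infty}$, as claimed.

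I expect the main obstacle to be the number theory of case (ii): setting up the bijection between $\Stab(\alpha)$ and the units of $\mathcal{O}_D$ and, crucially, matching the two invariants, namely the determinant of the matrix (orientation of the similarity) with the norm of the corresponding unit, so that $\Symm_+$ corresponds exactly to solvability of $x^2 - Dy^2 = -4$. The discriminant bookkeeping — primitive polynomial versus the order $\mathcal{O}_D$, the integrality condition $s \equiv kB \pmod 2$, and the factor of $4$ in $s^2 - Dk^2 = \pm 4$ — must be handled carefully. By comparison, case (i) is routine once the two-line structure is exploited, and case (iii) is immediate.
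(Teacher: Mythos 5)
Your proposal is correct and follows essentially the same route as the paper: case (iii) via Corollary \ref{quad}, case (i) via the explicit geometric description of the strip packing's symmetries, and case (ii) by using Proposition \ref{isomstab} to identify $\Symm(\PP_\alpha)$ with $\Stab(\alpha)\subset\PGL_2(\zz)$ and then parametrizing stabilizing matrices by solutions of $s^2-Dk^2=\pm4$, i.e.\ by units of the order of discriminant $D$ modulo $\pm1$, with the determinant matching the norm. The only cosmetic difference is the direction of the identification in case (ii): you pass from a matrix to its eigenvalue $c\alpha+d=(s+k\sqrt{D})/2$, whereas the paper constructs the inverse homomorphism $\Gamma$ from Pell solutions to matrices and computes its kernel and image — these are the same bijection.
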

\begin{proof}
First of all, it follows immediately from Corollary \ref{quad} that, for any $\alpha > 0$ that is neither rational nor quadratic, $\PP_{\alpha}$ is not self-similar. Therefore $\Symm(\PP_{\alpha}) = \Symm_+(\PP_{\alpha}) = 1$ for all such $\alpha$.

Now suppose that $\alpha$ is rational. Then Corollary \ref{strip} tells us that $\PP_{\alpha}$ is a strip packing. In this case, the full self-similarity group is generated by a translation `along' the strip, a reflection in a line perpendicular to the strip, and a reflection that interchanges the two lines. The resulting group is isomorphic to $D_\infty \times \zz/2\zz$ where $D_\infty$ is the infinite dihedral group. The subgroup of orientation-preserving self-similarities of $\PP_{\alpha}$ is generated by the translation and the rotation given by combining the two reflections. This subgroup is isomorphic to $D_\infty$.

The main focus of our work is the case where $\alpha$ is of degree precisely $2$ over $\qq$. We have already shown in Proposition \ref{isomstab} that
\begin{equation*}
\Symm(\PP_{\alpha}) \cong \left\{\left.\begin{bmatrix} a & b \\ c & d \end{bmatrix} \in \PGL_2(\zz) \; \right| \; \frac{a\alpha + b}{c\alpha + d} = \alpha \right\}
\end{equation*}
and that
\begin{equation*}
\Symm_+(\PP_{\alpha}) \cong \left\{\left.\begin{bmatrix} a & b \\ c & d \end{bmatrix} \in \PSL_2(\zz) \; \right| \; \frac{a\alpha + b}{c\alpha + d} = \alpha \right\},
\end{equation*}
so it suffices to calculate these stabilizer groups.

It turns out that the elements of $\PGL_2(\zz)$ that fix $\alpha$ are closely related to the solutions to the Pell equations
\begin{equation}\label{eqn:pell}
x^2 - Dy^2 = \pm 4,
\end{equation}
where $D = q^2 - 4pr$ is the discriminant of the primitive integer polynomial $f(x) = px^2 + qx + r$ satisfied by $\alpha$, with $p > 0$.

Define the sets $\GG$ and $\GG_+$ as follows:
\begin{equation*}
\GG := \left\{\left. \frac{x + y\sqrt{D}}{2}\ \ \right| \ x,y \in \zz, \; x^2 - Dy^2 = \pm 4\right\}
\end{equation*}
and
\begin{equation*}
\GG_+ := \left\{\left. \frac{x + y\sqrt{D}}{2}\ \ \right| \ x,y \in \zz, \; x^2 - Dy^2 = 4\right\},
\end{equation*}
where $D$ is as in the previous paragraph. One can easily check that $\GG$ is a group under multiplication with subgroup $\GG_+$. Furthermore, one can show (see \cite[Theorem 1.9]{jacobson:2009}, for example) that
\begin{equation}
\GG \cong \mathbb{Z} \times \{\pm 1\}.
\label{eqn:pellgroup}
\end{equation}

Our proof of Theorem \ref{thm} is given by relating the stabilizer of $\alpha$ in $\PGL_2(\zz)$ with the group $\GG$.

Recall that the integers $p,q,r$ are the coefficients of the primitive integer polynomial satisfied by $\alpha$ and that $D = q^2 - 4pr$. We first construct a group homomorphism
\begin{align*}
\Gamma : \GG &\to \left\{\left.\begin{bmatrix} a & b \\ c & d \end{bmatrix} \in \PGL_2(\zz) \; \right| \; \frac{a\alpha + b}{c\alpha + d} = \alpha \right\} \\
\frac{x + y\sqrt{D}}{2} &\mapsto
\begin{bmatrix}
\frac{x-yq}{2} & -yr\\
yp & \frac{x+yq}{2}
\end{bmatrix}
\end{align*}
The matrices in the image of $\Gamma$ consist of integer entries because:
\begin{align*}
x \pm yq &\equiv x^2 - y^2q^2 \pmod 2\\
&\equiv x^2 - y^2(q^2 - 4pr) \pmod 2\\
&= x^2 - Dy^2 = \pm 4 \equiv 0 \pmod 2.
\end{align*}
These matrices stabilize $\alpha$ because:
\begin{align*}
p\alpha^2 + q\alpha + r = 0 &\implies yp\alpha^2 + yq\alpha + yr = 0\\
&\implies yp\alpha^2 + \left(\frac{x + yq}{2} - \frac{x - yq}{2}\right) \alpha + yr = 0\\
&\implies \frac{\frac{x-yq}{2} \alpha - yr}{yp\alpha + \frac{x+yq}{2}} = \alpha,
\end{align*}
and are invertible because
\begin{align*}
\begin{vmatrix}
\frac{x - yq}{2} & -yr\\
yp & \frac{x + yq}{2}
\end{vmatrix}
&= \frac{x^2 - y^2q^2}{4} + y^2pr\\
&= \frac{1}{4}(x^2 - Dy^2)\\
&= \pm 1,
\end{align*}
where the sign is positive if and only if $(x,y)$ satisfies $x^2 - Dy^2 = +4$. To see that $\Gamma$ is a group homomorphism, we check:
\begin{align*}
\Gamma\left(\frac{x+y\sqrt{D}}{2} \cdot \frac{x'+y'\sqrt{D}}{2}\right) &= \Gamma\left(\frac{\frac{xx' + Dyy'}{2} + \frac{xy' + x'y}{2}\sqrt{D}}{2}\right)\\
&=
\begin{bmatrix}
\frac{xx' + yy'(q^2 - 4pr) - xy'q -x'yq}{4} & \frac{-xy'r - x'yr + yy'qr - yy'qr}{2}\\
\\
\frac{x'yp + xy'p + yy'pq - yy'pq}{2} & \frac{xx' + yy'(q^2-4pr) + xy'q + x'yq}{4}
\end{bmatrix}\\
&=
\begin{bmatrix}
\frac{x-yq}{2} & -yr\\
yp & \frac{x+yq}{2}
\end{bmatrix}
\begin{bmatrix}
\frac{x'-y'q}{2} & -y'r\\
y'p & \frac{x'+y'q}{2}
\end{bmatrix}\\
&= \Gamma\left(\frac{x+y\sqrt{D}}{2}\right) \Gamma\left(\frac{x'+y'\sqrt{D}}{2}\right).
\end{align*}

Combining the map $\Gamma$ with the isomorphism of Proposition \ref{isomstab} we have now shown how to construct, for each solution to (\ref{eqn:pell}), a self-similarity of $\PP_{\alpha}$. To prove our Theorem, we calculate the kernel and image of the homomorphism $\Gamma$. First, we show that $\Gamma$ is surjective, which implies that every self-similarity of $\PP_{\alpha}$ arises from a solution to (\ref{eqn:pell}) in the manner described above. So suppose we are given a matrix
$A = \begin{bmatrix}
a & b\\
c & d
\end{bmatrix}$
that stabilizes $\alpha$. In particular, it follows that
\[ c\alpha^2 + (d-a)\alpha - b = 0 \]
This polynomial is therefore an integer multiple of the primitive polynomial $px^2 + qx + r$ with root $\alpha$. That is, there exists $m \in \mathbb{Z}$ such that
\begin{align*}
c &= mp  \\
d-a &= mq  \\
-b &= mr.
\end{align*}
Now set
\begin{align*}
x &= a+d \\
y &= m.
\end{align*}
We clearly have $x,y \in \zz$ and
\begin{align*}
x^2 - Dy^2 &= (a+d)^2 - (q^2 - 4pr)m^2 \\
&= (a+d)^2 - (a-d)^2 - 4bc \\
&= 4(ad - bc) = \pm 4.
\end{align*}
so $\frac{x + y\sqrt{D}}{2} \in \GG$. (Moreover, this is in $\GG_+$ if and only if $A \in \PSL_2(\zz)$.) It is easy to check that $\Gamma(\frac{x + y\sqrt{D}}{2}) = A$ as required.

Finally, $\frac{x + y\sqrt{D}}{2}$ is in the kernel of $\Gamma$ if and only if
\[ x - yq = x + yq = \pm 2 \ , \ yr = yp = 0. \]
Since $p$ cannot be zero ($\alpha \not \in \qq$), it follows that $y = 0$, and therefore $x = \pm 2$. In other words,
\begin{equation*}
\ker(\Gamma) = \{\pm 1\}.
\end{equation*}

Putting together our various isomorphisms and using \eqref{eqn:pellgroup}, we now have
\[ \Symm(\PP_{\alpha}) \cong \GG / \{\pm 1\} \cong \mathbb{Z}. \]
We have also seen that the orientation-preserving self-similarities correspond under this isomorphism to the subgroup $\GG_+ / \{\pm 1\}$. There are two possibilities here. One is that the generator for $\GG$ is in $\GG_+$. In this case the groups are equal and all the self-similarities of $\PP_{\alpha}$ are orientation-preserving. This happens when there are no integer solutions to the equation
\[ x^2 - Dy^2 = -4. \]
The other possibility is that the generator $z = \frac{x_0 + y_0\sqrt{D}}{2}$ for $\GG$ is not in $\GG_+$. But then, however, $z^2$ is in $\GG_+$ and so $\GG_+$ is an index 2 subgroup of $\GG$. In this case, $\Symm_+(\PP_{\alpha})$ is an index 2 subgroup in $\Symm(\PP_{\alpha})$ as claimed.
\end{proof}

\begin{cor}
The half-plane packing $\PP_{\alpha}$ is self-similar if and only if $\alpha$ is rational or quadratic over $\mathbb{Q}$. In the quadratic case, $\PP_{\alpha}$ is self-similar via an orientation-reversing self-similarity if and only if the equation $x^2 - Dy^2 = -4$ has an integral solution $(x,y)$, where $D$ is the discriminant of the primitive integral polynomial with root $\alpha$.
\end{cor}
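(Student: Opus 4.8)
The plan is to deduce both assertions directly from Theorem \ref{thm}, which already computes $\Symm(\PP_{\alpha})$ and $\Symm_+(\PP_{\alpha})$ in every case; the corollary is essentially a repackaging of that theorem as a self-similarity criterion, together with the elementary observation that being quadratic or rational over $\qq$ is the same as being a root of a rational quadratic polynomial.

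First I would establish the self-similarity dichotomy. For the forward direction I would argue by contraposition: if $\alpha$ is neither rational nor quadratic over $\qq$, then Theorem \ref{thm}(iii) gives $\Symm(\PP_{\alpha}) = 1$, so $\PP_{\alpha}$ is not self-similar. (Alternatively, Corollary \ref{quad} shows directly that self-similarity forces $\alpha$ to be a root of a rational quadratic polynomial, and hence rational or quadratic over $\qq$, since those are exactly the roots of such polynomials.) For the converse I would invoke Theorem \ref{thm}(i) in the rational case, where $\Symm(\PP_{\alpha}) \cong D_\infty \times \zz/2\zz$ is nontrivial, and Theorem \ref{thm}(ii) in the quadratic case, where $\Symm(\PP_{\alpha}) \cong \zz$ is again nontrivial; in either case $\PP_{\alpha}$ is self-similar.

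For the second assertion I would note that $\PP_{\alpha}$ admits an orientation-reversing self-similarity precisely when $\Symm_+(\PP_{\alpha})$ is a proper subgroup of $\Symm(\PP_{\alpha})$, since every self-similarity is either orientation-preserving or orientation-reversing. In the quadratic case Theorem \ref{thm}(ii) tells us exactly when this strict containment occurs: $\Symm_+(\PP_{\alpha}) = \Symm(\PP_{\alpha})$ when the Pell equation $x^2 - Dy^2 = -4$ has no integral solution, and $\Symm_+(\PP_{\alpha})$ has index $2$ in $\Symm(\PP_{\alpha})$ (hence is proper) when it does. Thus an orientation-reversing self-similarity exists if and only if $x^2 - Dy^2 = -4$ is solvable in integers, as claimed.

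Since both parts fall out immediately from the structural description in Theorem \ref{thm}, I do not expect any substantive obstacle. The only point requiring a word of care is the elementary translation noted above, namely that ``possessing an orientation-reversing self-similarity'' is equivalent to the strict inclusion $\Symm_+(\PP_{\alpha}) \subsetneq \Symm(\PP_{\alpha})$; this is what converts the index-$2$ computation of Theorem \ref{thm}(ii) into the stated Pell-equation criterion.
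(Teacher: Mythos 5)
Your proposal is correct and matches the paper's intent exactly: the paper states this corollary immediately after Theorem \ref{thm} with no separate proof, treating it as the same direct repackaging of parts (i)--(iii) and of the index-$2$ dichotomy for $\Symm_+(\PP_{\alpha})$ that you spell out. The one point you flag — that an orientation-reversing self-similarity exists precisely when $\Symm_+(\PP_{\alpha}) \subsetneq \Symm(\PP_{\alpha})$ — is indeed the only translation needed, and it holds since every similarity is either orientation-preserving or orientation-reversing.
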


\section{Half-plane Packings and Continued Fractions}\label{sec:contfrac}

In this section, our main goal is to describe how the continued fraction of a positive real number $\alpha$ manifests itself geometrically in the half-plane packing $\PP_{\alpha}$. Recall that the standard continued fraction expansion of a positive real number $\alpha$ is a representation of the form
\[ \alpha = a_0 + \cfrac{1}{a_1 + \cfrac{1}{a_2 + \cfrac{1}{\ddots}}} \]
for some integers $a_k$, with $a_0 \ge 0$ and $a_k > 0$ for all $k > 0$. It is also standard (and more practical typographically) to express this expansion simply as $\alpha = [a_0,a_1,a_2,\ldots]$.

To begin, we recall the algorithm for computing the continued fraction expansion of a positive real number $\alpha$. The continued fraction expansion is computed by successive iterations of the following algorithm, which we refer to as the \textbf{continued fraction algorithm}. The input for the algorithm is the number $\alpha_0 = \alpha$. Each step of the algorithm takes $\alpha_n$ and determines $\alpha_{n+1}$.
\begin{enumerate} \renewcommand{\labelenumi}{(\Alph{enumi})}
\item If $\alpha_n \geq 1$, let $\alpha_{n+1} = \alpha_n - 1$.
\item If $0 < \alpha_n < 1$, let $\alpha_{n+1} = \frac{1}{\alpha_n}$.
\item If $\alpha_n = 0$, halt.
\end{enumerate}
Recording the sequence of steps obtained when applying this algorithm to a positive real number $\alpha$ we get something like
\[ ABAABAAC. \]
The positive integer $a_k$ from the continued fraction expansion corresponds precisely to the length of the $(k+1)$th string of consecutive $A$'s. For example, the above sequence represents the application of the continued fraction algorithm to $\alpha_0 = \frac{7}{5}$. The resulting continued fraction expansion is
\[ [1,2,2] = 1 + \cfrac{1}{2 + \cfrac{1}{2}} = \frac{7}{5}. \]
The sequence $(\alpha_n)$ in this case is:
\begin{equation}
\frac{7}{5} \xrightarrow{\mbox{(A)}} \frac{2}{5} \xrightarrow{\mbox{(B)}} \frac{5}{2}\xrightarrow{\mbox{(A)}} \frac{3}{2} \xrightarrow{\mbox{(A)}} \frac{1}{2} \xrightarrow{\mbox{(B)}} 2 \xrightarrow{\mbox{(A)}} 1 \xrightarrow{\mbox{(A)}} 0.
\label{cfe7/5}
\end{equation}
Notice that the continued fraction expansion for $\alpha_n$ is the same as that for $\alpha$, but with an `initial segment' removed. For example, if $\alpha_0 = [2,3,4,5,6,7]$, then we have
\[ \begin{split}
  \alpha_1 = [1,3,4,5,6,7] \\
  \alpha_2 = [0,3,4,5,6,7] \\
  \alpha_3 = [3,4,5,6,7] \\
  \alpha_4 = [2,4,5,6,7] \\
  \alpha_5 = [1,4,5,6,7] \\
  \alpha_6 = [0,4,5,6,7] \\
  \alpha_7 = [4,5,6,7]
\end{split} \]
and so on.

Turning now back to Apollonian circle packings, we define a \textbf{circle replacement algorithm}. The input of this algorithm is an ordered pair $(X_0,Y_0)$, where $X_0$ and $Y_0$ are tangent circles in a half-plane packing $\PP$ that are also tangent to a chosen line $L \in \PP$. We also require that $Y_0$ is not itself a line. At the $(n+1)$th step of the algorithm, we replace the pair $(X_n,Y_n)$ with a new pair of circles $(X_{n+1},Y_{n+1})$:
\begin{enumerate} \renewcommand{\labelenumi}{(\Alph{enumi})}
\item If $\curv(X_n) \geq \curv(Y_n)$, take $Y_{n+1} = Y_n$ and take $X_{n+1}$ to be the circle that fills the unbounded interstice for $X_n$ and $Y_n$ (in the sense of Definition \ref{defn:pairinterstice}). Note that Corollary \ref{descartesline} implies that
    \[ \sqrt{\curv(X_{n+1})} = \sqrt{\curv(X_n)} - \sqrt{\curv(Y_n)}. \]
\item If $0 < \curv(X_n) < \curv(Y_n)$, take $X_{n+1} = Y_n$ and $Y_{n+1} = X_n$.
\item If $\curv(X_n) = 0$, halt.
\end{enumerate}

As with the continued fraction algorithm, we are interested in the sequence of steps involved when the algorithm is performed to a given starting pair of circles. (For example, we might obtain the sequence $AABABAAAABAAC$.) Our main observation is then the following.

\begin{lem} \label{lem:cfe}
Let $\alpha$ be a positive real number, and let $(X_0,Y_0)$ be the two circles used to construct the half-plane packing $\PP_{\alpha}$: $X_0$ and $Y_0$ are tangent to each other and to the $x$-axis $L$, and $\curv(X_0) = \alpha^2$, $\curv(Y_0) = 1$. Then the sequence of steps (A, B, or C) performed in applying the continued fraction algorithm to $\alpha$ is the same as the sequence of steps performed in applying the circle replacement algorithm to $(X_0,Y_0)$. Moreover, we have
\[ \alpha_n = \frac{\sqrt{\curv(X_n)}}{\sqrt{\curv(Y_n)}} \]
for all $n \geq 0$.
\end{lem}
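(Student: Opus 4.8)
The plan is to prove both assertions simultaneously by induction on $n$, tying the two algorithms together with a single invariant. Write $x_n := \sqrt{\curv(X_n)}$ and $y_n := \sqrt{\curv(Y_n)}$; these are nonnegative reals, and I will carry along the auxiliary claim that $Y_n$ is never a line, so that $y_n > 0$ and the ratio $x_n/y_n$ is always defined. The induction hypothesis at stage $n$ will be the conjunction: (i) the continued fraction algorithm and the circle replacement algorithm perform the same step (A, B, or C) at stage $n$; and (ii) $\alpha_n = x_n/y_n$. The base case is immediate: $\alpha_0 = \alpha = \sqrt{\alpha^2}/\sqrt{1} = x_0/y_0$, using $\alpha > 0$, and $Y_0$ is a genuine circle by hypothesis.

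For the inductive step, assume $\alpha_n = x_n/y_n$ with $y_n > 0$. Step-matching follows by comparing the branch conditions: since $x_n,y_n \geq 0$ and $y_n > 0$, the inequality $\alpha_n \geq 1$ is equivalent to $x_n \geq y_n$, i.e. to $\curv(X_n) \geq \curv(Y_n)$; likewise $0 < \alpha_n < 1$ is equivalent to $0 < \curv(X_n) < \curv(Y_n)$, and $\alpha_n = 0$ is equivalent to $\curv(X_n) = 0$. Hence the two algorithms branch identically. It then remains to propagate (ii) through each branch. In case (A) the circle algorithm gives $x_{n+1} = x_n - y_n$ (by Corollary \ref{descartesline}, as already recorded in the statement of the algorithm) and $y_{n+1} = y_n$, so $x_{n+1}/y_{n+1} = (x_n - y_n)/y_n = \alpha_n - 1 = \alpha_{n+1}$. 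In case (B) the two circles are swapped, so $x_{n+1}/y_{n+1} = y_n/x_n = 1/\alpha_n = \alpha_{n+1}$. In case (C) both algorithms halt. This closes the induction and shows that the step sequences agree and the ratio formula holds at every stage for which the algorithms are defined.

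The substance of the argument is bookkeeping about well-definedness rather than any single hard computation, and this is where I expect to need care. First, I must verify the auxiliary claim that $Y_n$ is never a line: in case (A) we keep $Y_{n+1} = Y_n$, and in case (B) we set $Y_{n+1} = X_n$, which has positive curvature precisely because case (B) is invoked under the hypothesis $\curv(X_n) > 0$; so $y_{n+1} > 0$ in both cases. Second, for the circle operations to make sense I need $X_n$ to be a genuine (non-line) circle whenever case (A) or (B) is invoked, so that the unbounded interstice of $X_n$, $Y_n$, $L$ is defined; this holds since both branch conditions force $\curv(X_n) > 0$. Finally, I would flag the boundary case in (A) where $\curv(X_n) = \curv(Y_n)$ (equivalently $\alpha_n = 1$): there $x_{n+1} = 0$, so $X_{n+1}$ is a line, the invariant reads $\alpha_{n+1} = 0/y_n = 0$, and both algorithms take step (C) next. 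Handling this edge case cleanly, and observing that the matched branching forces the two algorithms to reach a (C) step at the same index (or to run forever together), is the main point requiring attention.
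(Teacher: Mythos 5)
Your proof is correct and follows essentially the same route as the paper's: induction on $n$ with the invariant $\alpha_n = \sqrt{\curv(X_n)}/\sqrt{\curv(Y_n)}$, matching the branch conditions of the two algorithms, and invoking Corollary \ref{descartesline} in case (A). The extra bookkeeping you supply (that $Y_n$ is never a line, and the boundary case $\curv(X_n)=\curv(Y_n)$) is sound but left implicit in the paper.
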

\begin{proof}
The proof is by induction on $n$. For $n = 0$, this is the claim
\[ \alpha = \frac{\sqrt{\curv(X_0)}}{\sqrt{\curv(Y_0)}} \]
which is true by the choice of $X_0$ and $Y_0$.

Suppose that the claim holds for $\alpha_n$ and $(X_n,Y_n)$. Then $\alpha_n \geq 1$ if and only if $\curv(X_n) \geq \curv(Y_n)$ and $\alpha_n = 0$ if and only if $\curv(X_n) = 0$. This tells us that the next step (A, B, or C) will be the same for both algorithms. So it remains only to verify that the formula still holds for $\alpha_{n+1}$ and $(X_{n+1},Y_{n+1})$.

Suppose that $\alpha_n \geq 1$. Then we have $\alpha_{n+1} = \alpha_n - 1$, so it is sufficient to show that
\[ \frac{\sqrt{\curv(X_{n+1})}}{\sqrt{\curv(Y_{n+1})}} = \frac{\sqrt{\curv(X_n)}}{\sqrt{\curv(Y_n)}} - 1. \]
We have $Y_{n+1} = Y_n$, so it is enough to show that
\[ \sqrt{\curv(X_{n+1})} = \sqrt{\curv(X_n)} - \sqrt{\curv(Y_n)} \]
which follows from Corollary \ref{descartesline} as mentioned above.

Finally, suppose that $0 < \alpha_n < 1$. Then
\[ \alpha_{n+1} = \frac{1}{\alpha_n} = \frac{\sqrt{\curv(Y_n)}}{\sqrt{\curv(X_n)}} = \frac{\sqrt{\curv(X_{n+1})}}{\sqrt{\curv(Y_{n+1})}}. \]
\end{proof}

Figure \ref{fig:cfe7/5} shows the circle replacement algorithm applied to the packing $\PP_{\alpha}$ for $\alpha = \frac{7}{5}$. The circles are labeled by the square roots of their curvatures. Compare this to \eqref{cfe7/5} as an illustration of Lemma \ref{lem:cfe}.

\begin{figure}[h]
	\begin{overpic}[scale=.99]{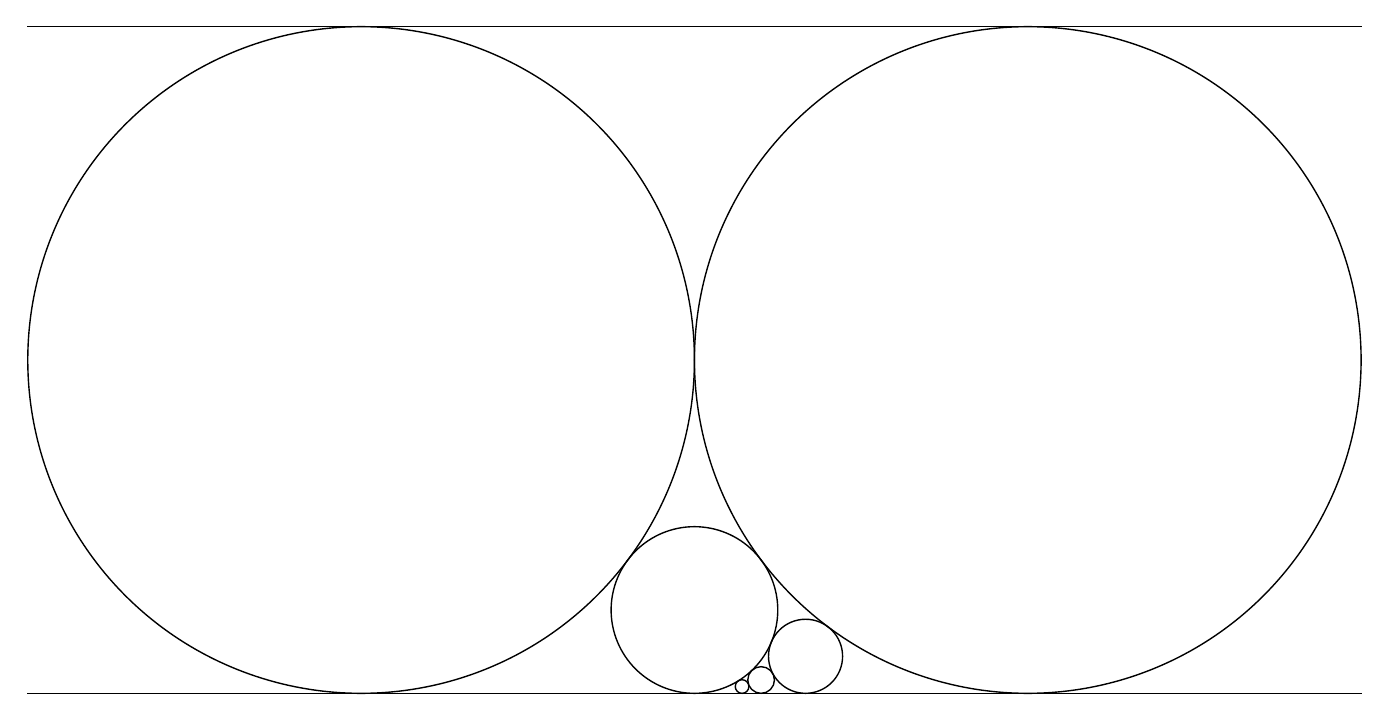}
	\put(199,-2){\large $\nearrow$}
	\put(221,-2){\large $\nwarrow$}
	\put(193,-14){\Large $\frac{7}{5}$}
	\put(232,-14){\Large $1$}
	\put(227,15){\Large $\frac{3}{5}$}
	\put(194,26){\huge $\frac{2}{5}$}
	\put(99,100){\Huge $\frac{1}{5}$}
	\put(290,100){\Huge $\frac{1}{5}$}
	\put(380,180){\Huge $0$}
	\put(380,-13){\Huge $L$}
	\end{overpic}
	
\vspace{5mm}	
	
	\caption{The circle replacement algorithm applied to the packing $\PP_{\frac{7}{5}}$. The circles are labeled by the square roots of their curvatures.}
  \label{fig:cfe7/5}
\end{figure}

Recall that the continued fraction expansion of a real number $\alpha$ determines a sequence of rational numbers $\frac{p_n}{q_n}$ that converge to $\alpha$. These are the \emph{convergents} of $\alpha$ and are given by truncating the continued fraction expansion of $\alpha$. Thus if
\[ \alpha = [a_0,a_1,a_2,\dots] \]
then set
\[ \frac{p_n}{q_n} := [a_0,a_1,\dots,a_n], \]
where $p_n$ and $q_n$ are nonnegative coprime integers. They satisfy the recurrence equations
\begin{equation}
p_n = p_{n-2} + a_np_{n-1}, \quad q_n = q_{n-2} + a_nq_{n-1}.
\label{eqn:recurrence}
\end{equation}

We now observe that the convergents of $\alpha$ appear in the labels (in the sense of \S\ref{sec:acp}) of the circles in the circle replacement algorithm applied to the packing $\PP_{\alpha}$.

\begin{lem} \label{lem:convergents}
Let $\alpha$ be a positive real number. Let $(X_0,Y_0) = (C_{(1,0)},C_{(0,1)})$ be the generating circles for the packing $\PP_{\alpha}$. The sequence of \emph{distinct} circles in the sequence $(Y_j)$ defined by the circle replacement algorithm is
\[ C_{(0,1)}, C_{(q_0,-p_0)}, C_{(-q_1,p_1)}, C_{(q_2,-p_2)}, C_{(-q_3,p_3)}, \dots. \]
In particular, if $\alpha \not\in \mathbb{Q}$, then
\[ \lim_{j \to \infty} \curv(Y_j) = 0. \]
\end{lem}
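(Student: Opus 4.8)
The plan is to track the labels $u_n := \vec{x}(X_n)$ and $v_n := \vec{x}(Y_n)$ of the circles produced by the circle replacement algorithm, show that they evolve by an explicit linear recursion, identify that recursion with the recurrence (\ref{eqn:recurrence}) for the convergents $p_n/q_n$, and read off the limit from the standard convergence of continued fractions. Here $u_0 = (1,0)$ and $v_0 = (0,1)$.

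First I would establish the key recursion: at a step of type (A) one has $u_{n+1} = u_n - v_n$ and $v_{n+1} = v_n$; at a step of type (B) one has $u_{n+1} = v_n$ and $v_{n+1} = u_n$. The (B) case is immediate from the definition of the algorithm. For (A), the central observation is that $X_{n+1}$, $Y_n$, $X_n$ are three mutually tangent circles all tangent to $L$, and that by the relation noted in step (A), $\sqrt{\curv(X_n)} = \sqrt{\curv(X_{n+1})} + \sqrt{\curv(Y_n)}$; since among three mutually tangent circles tangent to $L$ exactly one fills the bounded interstice of the other two, and by (\ref{eqn2}) that one has square-root curvature equal to the sum of the other two's, it must be $X_n$ that fills the bounded interstice for $X_{n+1}$ and $Y_n$. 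It then remains to prove a general label-additivity statement: if $A \prec C \prec B$ in $\PP_L$ with $C$ filling the bounded interstice for $A$ and $B$, then $\vec{x}(C) = \vec{x}(A) + \vec{x}(B)$. This follows from Proposition \ref{uniquecircle}: the tangencies $A \prec B$, $A \prec C$, $C \prec B$ force the matrices with rows $(\vec{x}(A),\vec{x}(B))$, $(\vec{x}(A),\vec{x}(C))$, $(\vec{x}(C),\vec{x}(B))$ all to have determinant $1$, so writing $\vec{x}(C) = \lambda\vec{x}(A) + \mu\vec{x}(B)$ in the basis $\{\vec{x}(A),\vec{x}(B)\}$ gives $\lambda = \mu = 1$. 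Applied to $(A,B,C) = (X_{n+1},Y_n,X_n)$ this yields $u_n = u_{n+1} + v_n$, hence $u_{n+1} = u_n - v_n$.

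Next I would run the recursion block by block, using Lemma \ref{lem:cfe}: the step sequence consists of a block of $a_0$ steps of type (A), then one of type (B), then $a_1$ of type (A), and so on, where $a_k$ is the $k$th partial quotient; in particular $v_n$ is constant on each block and changes only at (B)-steps. A $k$-indexed induction then shows that immediately after the block of $a_k$ type-(A) steps one has $u = ((-1)^k q_k, (-1)^{k+1} p_k)$ and $v = ((-1)^{k-1} q_{k-1}, (-1)^k p_{k-1})$, with the convention $p_{-1} = 1$, $q_{-1} = 0$. The inductive step composes the swap with $a_{k+1}$ applications of $u \mapsto u - v$ and invokes exactly the convergent recurrence $p_{k+1} = a_{k+1}p_k + p_{k-1}$, $q_{k+1} = a_{k+1}q_k + q_{k-1}$ together with the identity $(-1)^{k-1} = (-1)^{k+1}$. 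Since $v$ equals $((-1)^{k-1}q_{k-1}, (-1)^k p_{k-1})$ throughout block $k$, listing the distinct values of $Y_j$ in order gives precisely $C_{(0,1)}, C_{(q_0,-p_0)}, C_{(-q_1,p_1)}, C_{(q_2,-p_2)}, \dots$. (One caveat worth a sentence: for $\alpha \in \qq$ the algorithm halts, the final type-(A) step produces the terminal line, which appears only as an $X$ and never as a $Y$, so the list of distinct $Y_j$ is finite and stops one term short of the terminal label. This also means the label-additivity step is only needed when $X_{n+1}$ is a genuine circle, which is exactly the case at every (A)-step that precedes a (B)-step, so Proposition \ref{uniquecircle} always applies where it is used.)

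Finally, for the limit: when $\alpha \notin \qq$ the continued fraction is infinite and every block is finite, so there are infinitely many distinct $Y_j$ and the block index $k \to \infty$ as $j \to \infty$. By Lemma \ref{curv} the value of $\curv(Y_j)$ on block $k$ is $\big((-1)^{k-1}q_{k-1}\alpha + (-1)^k p_{k-1}\big)^2 = (q_{k-1}\alpha - p_{k-1})^2$, and the standard estimate $|q_{k-1}\alpha - p_{k-1}| < 1/q_k \to 0$ gives $\curv(Y_j) \to 0$. I expect the main obstacle to be the type-(A) label recursion, and specifically the general bounded-interstice additivity lemma (together with the small check that the circles involved are genuine circles rather than the terminal line); once that is in place, the sign bookkeeping against (\ref{eqn:recurrence}) and the limit are routine.
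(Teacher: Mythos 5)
Your proposal is correct and follows essentially the same route as the paper: both track the labels through the circle replacement algorithm, identify the block-by-block evolution with the convergent recurrence \eqref{eqn:recurrence}, and deduce the limit from Lemma \ref{curv} together with the standard estimate $|q_n\alpha - p_n| < 1/q_{n+1}$. You are somewhat more explicit than the paper about why an (A)-step subtracts labels (the paper treats $\vec{x}(X_{n+1}) = \vec{x}(X_n) - \vec{x}(Y_n)$ as immediate from the labelling rule) and about the alternating-sign bookkeeping, but these are refinements of the same argument rather than a different one.
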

\begin{proof}
We have $Y_0 = C_{(0,1)}$ and $X_0 = C_{(1,0)}$. The first new $Y_j$ will appear after the first application of step (B) of the algorithm, that is, after $a_0+1$ steps. At this point we have
\[ Y_{a_0+1} = X_{a_0} = C_{(1,-a_0)} = C_{(q_0,-p_0)} \]
and
\[ X_{a_0+1} = Y_{a_0} = C_{(0,1)}. \]
Now suppose, inductively, that immediately after the $n$th application of step (B) we have
\[ Y_N = C_{(q_{n-1},-p_{n-1})}, X_N = C_{(-q_{n-2},p_{n-2})}. \]
Running the algorithm until after the next application of (B), that is $a_n+1$ times, we have
\[ Y_{N+a_n+1} = X_{N + a_n} = C_{(-q_{n-2}-a_nq_{n-1},p_{n-2}+a_np_{n-1})} = C_{(-q_n,p_n)} \]
and
\[ X_{N+a_n+1} = Y_{N+a_n} = Y_N = C_{(q_{n-1},-p_{n-1})}. \]
The first claim now follows by induction on $n$.

By Lemma \ref{curv}, we have
\[ \curv(C_{(\pm q_n,\mp p_n)}) = (q_n \alpha - p_n)^2 = q_n^2 \left( \alpha - \frac{p_n}{q_n} \right)^2. \]
A basic fact about the convergents for continued fractions \cite[Theorem 171]{hardy/wright:2008} is that
\[ \left|\alpha - \frac{p_n}{q_n}\right| < \frac{1}{q_n^2} \]
so
\[ \curv(C_{(\pm q_n,\mp p_n)}) < \frac{1}{q_n^2}. \]
It follows from \eqref{eqn:recurrence} that $q_n \to \infty$ as $n \to \infty$, so the curvatures of the $Y_j$ tend to zero.
\end{proof}

Our goal is now to use this relationship between the circle replacement and continued fraction algorithms to give new criteria for two packings to be similar, and a new way to understand the self-similarities of a given packing, both in terms of continued fraction expansions. To do this we have to know that the circle replacement algorithm involves `enough' of the circles in the packing to be able to detect any similarity between two packings. The following lemma is key to this.

\begin{lem}\label{lem:replacementcircles}
Let $\PP$ be a half-plane packing (but not a strip packing). Fix an ordered pair of circles $(X_0,Y_0)$ as in the definition of the circle replacement algorithm. Let $X$ and $Y$ be any pair of tangent circles in $\PP_L$ such that $X_0$ and $Y_0$ are contained in the bounded interstice formed by $X$ and $Y$. Then one of the pairs $(X,Y)$ and $(Y,X)$ appears as $(X_n,Y_n)$ in the application of the circle replacement algorithm to $(X_0,Y_0)$.
\end{lem}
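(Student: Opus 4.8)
The plan is to re-root the packing at the triple $\{X,Y,L\}$ and carry out the whole argument in the resulting labels, thereby reducing the statement to a clean fact about the subtractive (``slow'') continued-fraction process on pairs of nonnegative integer vectors. First I would note that, since $\PP$ is not a strip packing, $L$ is the only line in $\PP$, so $X$ and $Y$ are genuine circles and $\{X,Y,L\}$ generates $\PP$ by Lemma \ref{anytriple}. After possibly swapping names (the conclusion allows either order), assume $X \prec Y$, and relabel via Definition \ref{labelling} with respect to this triple, writing $\vec{x}'$, so that $\vec{x}'(X)=(1,0)$ and $\vec{x}'(Y)=(0,1)$; denote by $\PP_L^{\prime+}$ the corresponding bounded-interstice set. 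The hypothesis that $X_0,Y_0$ lie in the bounded interstice for $X,Y$ says precisely that $X_0,Y_0 \in \PP_L^{\prime+}$ and differ from $X,Y$, so by Lemma \ref{boundedpositive} their labels have strictly positive entries. Setting $\beta=\sqrt{\curv(X)/\curv(Y)}$ (irrational, since $\PP$ is not a strip), Lemma \ref{curv} together with Corollary \ref{uniquecurv} lets me translate the circle replacement algorithm into label arithmetic: writing $\vec{x}'(X_n)=(a,b)$, $\vec{x}'(Y_n)=(c,d)$, step (A) replaces $(a,b)$ by $(a-c,b-d)$ (the unbounded-interstice filler, by Corollary \ref{descartesline}), step (B) swaps the two vectors, and the comparison of $\curv(X_n)$ with $\curv(Y_n)$ becomes the comparison of $a\beta+b$ with $c\beta+d$; by Corollary \ref{uniquecurv} these are never equal.

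The heart of the argument is the elementary claim that if $(a,b)$ and $(c,d)$ are nonnegative coprime vectors with $ad-bc=\pm 1$ and $\{(a,b),(c,d)\}\neq\{(1,0),(0,1)\}$, then one dominates the other componentwise. Taking $ad-bc=1$ without loss of generality, I would rule out the mixed case $a>c,\,d>b$: from $a\ge c+1$, $d\ge 1$, $b\ge 0$ one gets $1=ad-bc\ge c(d-b)+d\ge c+1$, forcing $c=0$ and hence $(a,b),(c,d)=(1,0),(0,1)$, the excluded root pair. Granting domination, the dominating vector has strictly larger $a\beta+b$ (as $\beta>0$ and the vectors differ), hence strictly larger curvature, so step (A) always grows, i.e. subtracts from, the dominating circle, and the result $(a-c,b-d)$ is again nonnegative and coprime, hence a circle of $\PP_L^{\prime+}$ by Lemma \ref{uniquecircle+}.

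Finally I would run the algorithm from $(X_0,Y_0)$ and track $S=a+b+c+d\ge 2$. Each step (A) performed inside $\PP_L^{\prime+}$ strictly decreases $S$, while each step (B) leaves $S$ fixed and cannot occur twice in succession; moreover from any non-root pair the curvature comparison forces (possibly after one swap) exactly the domination-decreasing subtraction, which stays in $\PP_L^{\prime+}$, so the descent never stalls before $S=2$. Since the only nonnegative coprime tangent pair with $S=2$ is $\{(1,0),(0,1)\}=\{X,Y\}$, the algorithm reaches it after finitely many steps (and one checks the very next step leaves $\PP_L^{\prime+}$), giving $(X_n,Y_n)\in\{(X,Y),(Y,X)\}$ for some $n$.

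The step I expect to be the main obstacle is establishing that the intrinsic, curvature-driven algorithm always subtracts in the ``correct'' direction, so that it genuinely climbs the chain of circles enclosing $X_0,Y_0$ rather than wandering off; the componentwise-domination claim is exactly what pins this down, and verifying it alongside the re-rooting identifications (that filling the unbounded interstice corresponds to label subtraction, and that curvature comparison corresponds to comparing $a\beta+b$ with $c\beta+d$) is where the real care is required.
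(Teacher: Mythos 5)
Your proof is correct, but it takes a genuinely different route from the paper's. The paper argues geometrically: it uses the second part of Lemma \ref{lem:convergents} (which rests on the Hardy--Wright convergent estimate) to see that the radii of the circles produced by the algorithm grow without bound, so some $X_N$ must eventually fail to fit in the bounded interstice for $X$ and $Y$; tangency with $X_{N-1}$ together with the remark after Definition \ref{defn:pairinterstice} then forces $X_N$ to be one of $X,Y$ rather than a circle in the unbounded interstice, and a short case analysis aligns the pair in the correct order. You instead re-root the labelling of \S\ref{sec:acp} at $\{X,Y,L\}$ and convert the circle replacement algorithm into a subtractive Euclidean algorithm on label vectors: the componentwise-domination claim (correctly verified -- the only non-dominating unimodular nonnegative pair is the root pair) guarantees each subtraction stays nonnegative, and the monovariant $S=a+b+c+d$ forces descent to $\{(1,0),(0,1)\}=\{X,Y\}$. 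Your approach buys independence from Lemma \ref{lem:convergents} and from any analytic input, reducing everything to the combinatorics of \S\ref{sec:acp} plus Corollary \ref{descartesline}; the price is the care needed in the dictionary between the algorithm and label arithmetic -- in particular that the unbounded-interstice filler has label $(a-c,b-d)$ rather than $(c-a,d-b)$, which you should pin down either via the sign statement $a\alpha+b\ge 0$ in Lemma \ref{curv} or by noting that the circle of largest curvature among three mutually tangent circles on $L$ is the middle one. The paper's argument is shorter given that Lemma \ref{lem:convergents} is already available; yours is more self-contained and makes the ``climbing'' mechanism explicit.
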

\begin{proof}
The second part of Lemma \ref{lem:convergents} implies that the radii of the circles $X_n$ and $Y_n$ increase without bound as $n$ tends to infinity. Therefore there is some smallest integer $N$ such that $X_N$ does not lie in the bounded interstice formed by $X$ and $Y$. Since $X_N$ is tangent to $X_{N-1}$, the remark following Definition \ref{defn:pairinterstice} implies that $X_N$ also cannot lie in the unbounded interstice for $X$ and $Y$. Hence $X_N$ is equal to one of $X,Y$, say $Y$ without loss of generality.

Now $Y_N$ is either equal to $X_n$ for some $n < N$, or is equal to $Y_0$. Either way, $Y_N$ is in the bounded interstice formed by $X$ and $Y$ and so, in particular, is smaller than $X_N$. This tells us that the next step in the circle replacement algorithm is (B); i.e., $X_{N+1} = Y_N$ and $Y_{N+1} = X_N = Y$. Since $Y_{N+1}$ is larger than $X_{N+1}$, we next repeat step (A) until $X_{N+K}$ is larger than $Y_{N+K} = Y_{N+1}$ for some $K \ge 0$. Then $X_{N+K}$ is not in the bounded interstice formed by $X$ and $Y$, but $X_{N+1}$ is. This means we can find a smallest $M$ with $N+1 < M \leq N+K$ such that $X_M$ is not in the bounded interstice formed by $X$ and $Y$. Since it is tangent to $X_{M-1}$, this circle $X_M$ also cannot be in the unbounded interstice, so must be one of $X$ and $Y$. But it is not $Y$ since $Y_M$ is. Therefore we have $X_M = X$ and $Y_M = Y$ which completes the proof.
\end{proof}

We can now relate properties of the continued fraction expansion of a positive real number $\alpha$ to geometric properties of the half-plane circle packing $\PP_{\alpha}$.

\subsection{Strip packings}
We already saw in Corollary \ref{strip} that $\PP_{\alpha}$ is the strip packing if and only if $\alpha \in \mathbb{Q}$. This is now reflected in the fact that the continued fraction expansion for $\alpha$ halts if and only if $\alpha \in \mathbb{Q}$. We can see from Lemma \ref{lem:cfe} that the continued fraction expansion of $\alpha$ halts exactly when the corresponding circle replacement algorithm produces a circle of curvature $0$, that is, a straight line. This is illustrated in the example of $\alpha = \frac{7}{5}$ displayed above.

\subsection{Similar packings}
We can determine whether the packings $\PP_{\alpha}$ and $\PP_{\beta}$ are similar by examining the tails of the continued fractions of $\alpha$ and $\beta$.
\begin{defn}
Let us say that $\alpha$ and $\beta$ have \textbf{eventually equal} continued fraction expansions if there is some $k,N \in \mathbb{Z}$ such that $a_n = b_{n+k}$ for all $n \geq N$ (where $[a_i]$ is the continued fraction expansion of $\alpha$ and $[b_j]$ is the continued fraction expansion of $\beta$). For example, $[1,2,3,4,4,4,4,\dots]$ and $[2,7,4,4,4,4,\dots]$ are eventually equal. For the purposes of Theorem \ref{thm:eventuallyequal}, we say that two finite continued fraction expansions are eventually equal since both expansions terminate.
\end{defn}

It is easy to see that $\alpha$ and $\beta$ have eventually equal continued fraction expansions if and only if there exist $m,n \geq 0$ such that $\alpha_n = \beta_m$ (where these are the sequences obtained by applying the continued fraction algorithm to $\alpha$ and $\beta$). This observation yields the following classification of half-plane packings up to similarity.

\begin{thm}\label{thm:eventuallyequal}
For positive real numbers $\alpha,\beta$, the circle packings $\PP_{\alpha}$ and $\PP_{\beta}$ are similar if and only if $\alpha$ and $\beta$ have eventually equal continued fraction expansions.
\end{thm}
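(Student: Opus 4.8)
The plan is to prove both implications by translating between the two algorithms via Lemma \ref{lem:cfe}, using the observation made just before the theorem that $\alpha$ and $\beta$ have eventually equal continued fraction expansions if and only if $\alpha_n = \beta_m$ for some $m,n \ge 0$. First I would dispose of the rational cases. If $\alpha \in \qq$ then the expansion of $\alpha$ terminates and, by Corollary \ref{strip}, $\PP_{\alpha}$ is a strip packing; two rationals thus give two strip packings, which are mutually similar, and two terminating expansions are eventually equal by convention. Conversely a rational and an irrational give a strip and a non-strip packing (not similar) with a finite and an infinite expansion (not eventually equal). So in the remaining case I may assume $\alpha$ and $\beta$ are both irrational, whence (again by Corollary \ref{strip}) the $x$-axis $L$ is the \emph{unique} line in each of $\PP_{\alpha}$ and $\PP_{\beta}$, every circle produced by the circle replacement algorithm is genuine, and no two circles in either packing share a curvature (Corollary \ref{uniquecurv}).

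For the direction ``eventually equal implies similar,'' suppose $\alpha_n = \beta_m$. Applying Lemma \ref{lem:cfe} to each packing, the pair $(X_n, Y_n)$ in the circle replacement sequence for $\PP_{\alpha}$ and the pair $(X_m', Y_m')$ for $\PP_{\beta}$ satisfy
\[ \frac{\curv(X_n)}{\curv(Y_n)} = \alpha_n^2 = \beta_m^2 = \frac{\curv(X_m')}{\curv(Y_m')}. \]
Since $(X_n, Y_n, L)$ and $(X_m', Y_m', L)$ are triples of mutually tangent circles, each containing the zero-curvature circle $L$, Lemma \ref{similar} (with scale factor $\mu = \curv(X_m')/\curv(X_n)$) immediately produces a similarity between $\PP_{\alpha}$ and $\PP_{\beta}$.

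The reverse direction carries the real content. Suppose $\Phi$ is a similarity with $\Phi \cdot \PP_{\beta} = \PP_{\alpha}$; since $L$ is the unique line in each packing, $\Phi(L) = L$, so $\Phi$ carries $\PP_{\beta,L}$ onto $\PP_{\alpha,L}$ and scales all curvatures by a fixed $\lambda > 0$. The key idea is that $\Phi$ \emph{transports} the circle replacement algorithm: the choice of step depends only on comparisons of curvatures and on the detection of a line, both preserved by $\Phi$, and $\Phi$ preserves tangency and which interstice is unbounded. Hence the $\Phi$-image of the circle replacement sequence for $\PP_{\beta}$ is itself a valid circle replacement sequence in $\PP_{\alpha}$, now starting from $(\Phi(X_0^{\beta}), \Phi(Y_0^{\beta}))$; and because curvatures scale uniformly, the ratio $\sqrt{\curv}/\sqrt{\curv}$ at its $i$-th pair is still $\beta_i$. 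I thus have two circle replacement sequences in the \emph{same} packing $\PP_{\alpha}$ — the image sequence and the canonical one, whose $i$-th ratio is $\alpha_i$ — and it suffices to show they realize a common ordered pair of circles. At any such shared pair, Lemma \ref{lem:cfe} forces $\alpha_{n'} = \beta_{j'}$ for the two matching indices, which is exactly the eventual-equality criterion.

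To locate a common pair, I would choose a single pair $(X,Y)$ of tangent circles in $\PP_{\alpha,L}$ whose bounded interstice contains all four starting circles $\Phi(X_0^{\beta}), \Phi(Y_0^{\beta}), X_0^{\alpha}, Y_0^{\alpha}$. Lemma \ref{lem:replacementcircles}, applied to each of the two sequences in turn, then shows the unordered pair $\{X,Y\}$ occurs in both; and since distinct tangent circles have distinct curvatures, each sequence in fact realizes the \emph{ordered} pair with the larger-curvature circle first (a pair appearing in the other order is followed one step later, via step (B), by this one), giving the required common ordered pair. The \textbf{main obstacle} is the geometric ``engulfing'' step: justifying that such a pair $(X,Y)$ exists. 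Here I would observe that along any one sequence the bounded interstices of the successive pairs are nested and increasing — step (A) enlarges $X_j$ into the unbounded interstice, which absorbs the previous bounded interstice, while step (B) merely swaps — and that by Lemma \ref{lem:convergents} the radii grow without bound. The delicate point is to show these expanding regions exhaust enough of the plane to enclose any prescribed finite set of circles of $\PP_{\alpha,L}$, which I expect to require checking that the tangency points of the enclosing circles with $L$ spread out to $\pm\infty$; once this is granted, everything else is bookkeeping with Lemmas \ref{lem:cfe}, \ref{lem:convergents}, and \ref{lem:replacementcircles}.
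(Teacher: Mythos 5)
Your proposal is correct and follows essentially the same route as the paper: dispose of the rational/strip case, obtain one direction from Lemma \ref{lem:cfe} together with Lemma \ref{similar}, and for the converse run two circle replacement sequences inside the single packing $\PP_{\alpha}$ and force them to share an (ordered) pair via Lemma \ref{lem:replacementcircles}, fixing the order with one extra step (B) if needed. The one step you flag as delicate --- that the expanding pairs eventually engulf any prescribed circles --- is exactly where the paper's proof does its real work, and it is closed by the observation that two disjoint circles of radii $R$ and $R'$ tangent to $L$ have tangency points at least $2\sqrt{RR'}$ apart, so the arbitrarily large circles produced by Lemma \ref{lem:convergents} cannot all meet $L$ in a bounded interval.
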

\begin{proof}
A quick proof of this result follows by identifying each of the conditions in the statement with the condition that there exist $p,q,r,s \in \mathbb{Z}$ with $ps - qr = \pm 1$ and $\frac{p\alpha + q}{r\alpha + s} = \beta$. For the circle packings, this is Lemma \ref{linfracequiv++}; for the continued fractions, it is \cite[Theorem 175]{hardy/wright:2008}. However, we give a more interesting proof arising from the direct comparison between the continued fraction and circle replacement algorithms.

First, note that $\alpha$ and $\beta$ have finite continued fraction expansions if and only if $\alpha$ and $\beta$ are rational, which is equivalent by Corollary \ref{strip} to $\PP_{\alpha}$ and $\PP_{\beta}$ both being strip packings, which are similar. We may therefore assume that $\alpha$ and $\beta$ are irrational.

Suppose $\alpha$ and $\beta$ have eventually equal continued fraction expansions. Then $\alpha_n = \beta_m$ for some $m,n$. This means that the ratio of the curvatures of a pair of tangent circles in $\PP_{\alpha}$, both tangent to $L$, is equal to the ratio of the curvatures of a pair of tangent circles in $\PP_{\beta}$, both tangent to $L$. It follows by Lemma \ref{similar} that there is a similarity between $\PP_{\alpha}$ and $\PP_{\beta}$.

To prove the converse, suppose $\PP_{\alpha}$ and $\PP_{\beta}$ are similar. Then there is a pair of circles $(X_0',Y_0')$ in $\PP_{\alpha}$ whose ratio of curvatures is equal to $\beta^2$, in addition to the original pair of circles $(X_0,Y_0)$ in $\PP_{\alpha}$ whose ratio of curvatures is $\alpha^2$. The key step is the following claim: if we apply the circle replacement algorithm to each of these pairs of circles, they will eventually coincide; that is, there is some pair of circles $(X,Y)$ in $\PP_{\alpha}$ that appears both as $(X_M,Y_M)$ and $(X'_N,Y'_N)$ for some $M,N \in \mathbb{N}$. Note that the circle replacement algorithm only sees \emph{ratios} of curvatures and not the curvatures themselves, so the circle replacement algorithm will generate the same numerical data for $(X',Y')$ as it would for the corresponding pair of circles in the packing $\PP_{\beta}$. From this claim, it follows that $\alpha$ and $\beta$ have eventually equal continued fraction expansions since
$$\alpha_M = \frac{\sqrt{\curv(X_M)}}{\sqrt{\curv(Y_M)}} = \frac{\sqrt{\curv(X)}}{\sqrt{\curv(Y)}} = \frac{\sqrt{\curv(X_N')}}{\sqrt{\curv(Y_N')}} = \beta_N.$$

Now let us prove that the circle replacement algorithms corresponding to $\alpha$ and $\beta$ eventually coincide in the sense described in the previous paragraph. Let $(X_n,Y_n)$ be the pairs obtained from applying the circle replacement algorithm to $(X_0,Y_0)$, as defined in the previous paragraph. We suppose without loss of generality that $X_0 \prec Y_0$ and $X'_0 \prec Y'_0$. If one of the pairs contained the other in its bounded interstice, say $X_0 \preceq X'_0 \prec Y'_0 \preceq Y_0$, then by Lemma \ref{lem:replacementcircles} there would be $M$ such that $\{X_M,Y_M\} = \{X'_0,Y'_0\}$. Suppose instead that
\[ X_0 \prec Y_0 \preceq X'_0 \prec Y'_0. \]
Suppose that there is no pair $(X_n,Y_n)$ that contains $(X'_0,Y'_0)$ in its bounded interstice. Then, for each $n$, one of the circles $X_n$ and $Y_n$, has its point of tangency with the $x$-axis between the corresponding tangency points of $Y_0$ and $X'_0$. By the second part of Lemma \ref{lem:convergents}, this means that there are arbitrarily large circles, all disjoint, with tangency points in this fixed interval. A little geometry shows that if two disjoint circles of radii $R$ and $R'$ are tangent to the $x$-axis, then their points of tangency are at least $2\sqrt{RR'}$ apart. This gives us a contradiction and so we deduce that there is $M$ such that $X_M$ and $Y_M$ contain both $X'_0$ and $Y'_0$ in their bounded interstice. But then by Lemma \ref{lem:replacementcircles}, the pair $\{X_M,Y_M\}$ is equal to $\{X'_N,Y'_N\}$ for some $N$. To complete the proof that the algorithms eventually coincide, we need to show that we can choose $M$ and $N$ such that $X_M = X'_N$ and $Y_M = Y'_N$.

Suppose instead that $X_M = Y_N'$ and $Y_M = X_N'$. We may assume that $X_M$ is smaller than $Y_M$ (otherwise, apply one more replacement to $(X_M,Y_M)$ to replace $X_M$ with $Y_{M+1}$ and $Y_M$ with $X_{M+1}$). It follows that $Y_N'$ is smaller than $X_N'$, so that the next step of the algorithm, step (B), will set $X_{N+1}' = Y_N'$ and $Y_{N+1}' = X_N'$, and we therefore have $X_M = X_{N+1}'$ and $Y_M = Y_{N+1}'$.
\end{proof}

\subsection{Self-similar packings} We also have already seen (Theorem \ref{thm}) that $\PP_{\alpha}$ is self-similar (but not the strip packing) if and only if $\alpha$ is quadratic over $\qq$. It is a well-known fact that an irrational number $\alpha$ is quadratic over $\qq$ if and only if its continued fraction expansion is infinite and periodic; i.e., if and only if
\[ \alpha = [a_0,\ldots,a_{n-1},c_0,\ldots,c_{m-1},c_0,\ldots,c_{m-1},\ldots] = [a_0,\ldots,a_{n-1},\overline{c_0,\ldots,c_{m-1}}]. \]

This fact, along with Theorem \ref{thm:eventuallyequal}, yields the following classification of self-similar half-plane packings (which are not strip packings):

\begin{thm}\label{thm:classification}
The similarity classes of self-similar half-plane (non-strip) circle packings correspond bijectively to finite non-repeating sequences of positive integers, up to cyclic permutation. (A sequence is non-repeating if it is not equal to the concatenation of multiple copies of the same smaller sequence.)
\end{thm}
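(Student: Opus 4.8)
The plan is to assemble the correspondence from the results already established, reducing everything to an elementary statement about periodic sequences. First I would recall that, by Theorem \ref{thm} together with Lemma \ref{simtopalpha}, every self-similar half-plane packing that is not a strip packing is similar to $\PP_{\alpha}$ for some quadratic irrational $\alpha > 0$, and conversely every such $\PP_{\alpha}$ is self-similar and non-strip. By Theorem \ref{thm:eventuallyequal}, two such packings $\PP_{\alpha}$ and $\PP_{\beta}$ are similar precisely when $\alpha$ and $\beta$ have eventually equal continued fraction expansions. Thus the set of similarity classes in question is in bijection with the set of quadratic irrationals modulo the relation of having eventually equal continued fractions. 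Invoking the classical theorem of Lagrange --- that a real number is a quadratic irrational if and only if its continued fraction expansion is eventually periodic --- I can write each such $\alpha$ as $[a_0,\ldots,a_{n-1},\overline{c_0,\ldots,c_{m-1}}]$.

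Next I would extract the invariant. To each eventually periodic expansion, associate its \emph{primitive period}: the minimal-length block that, repeated, produces the eventual tail. Because all terms beyond $a_0$ are positive, this block is a finite sequence of positive integers, and minimality is exactly the condition that it be non-repeating in the sense of the statement. The crux is then the combinatorial lemma: two eventually periodic sequences are eventually equal if and only if their primitive periods are cyclic permutations of one another. For the forward direction, if the two expansions agree after deleting finite initial segments, they share a common ultimate periodic sequence $R$; reading off a full minimal period of $R$ starting at two different positions yields two blocks that are cyclic permutations of each other, and each equals the corresponding primitive period. For the converse, if one primitive period is a cyclic permutation of the other, then the two periodic tails are shifts of one another and hence eventually equal. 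This lemma shows that the assignment ``similarity class $\mapsto$ primitive period up to cyclic permutation'' is both well defined and injective.

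Finally I would prove surjectivity. Given any non-repeating finite sequence $(c_0,\ldots,c_{m-1})$ of positive integers, set $\alpha = [\overline{c_0,\ldots,c_{m-1}}]$. Since $\alpha$ satisfies $\alpha = [c_0,\ldots,c_{m-1},\alpha]$, which expresses $\alpha$ as a linear fractional transformation of itself with integer coefficients, $\alpha$ is an irrational root of a quadratic with integer coefficients, so $\PP_{\alpha}$ is a self-similar non-strip packing. Its primitive period is exactly $(c_0,\ldots,c_{m-1})$, because that block is non-repeating and hence minimal. Thus every cyclic class of non-repeating sequences is hit, completing the bijection.

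The main obstacle I anticipate is not any single deep step but the careful bookkeeping in the combinatorial lemma: one must verify that the primitive period is genuinely independent of how far into the tail one begins reading (so that it is well defined up to cyclic permutation, and not up to some coarser equivalence), and must handle the phase shift introduced when initial segments of possibly different lengths are stripped from $\alpha$ and $\beta$. Making the equivalence ``eventually equal $\Leftrightarrow$ cyclic permutation of minimal periods'' precise, including the uniqueness of the minimal period length of a periodic sequence, is where the real work lies; everything else is a direct application of Theorems \ref{thm} and \ref{thm:eventuallyequal} and Lagrange's theorem.
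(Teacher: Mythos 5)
Your proposal is correct and follows essentially the same route as the paper: identify self-similar non-strip packings with quadratic irrationals via Theorem \ref{thm}, invoke Theorem \ref{thm:eventuallyequal} to reduce similarity to eventual equality of continued fraction expansions, and pass to the minimal periodic block up to cyclic permutation. The combinatorial lemma you isolate (eventually equal $\Leftrightarrow$ primitive periods are cyclic permutations) is exactly the fact the paper asserts without proof, so your extra care there only fills in what the paper leaves implicit.
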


\begin{proof}
$\PP_{\alpha}$ is self-similar if and only if the continued fraction expansion for $\alpha$ is periodic, as we mentioned above. We identify the similarity class of $\PP_{\alpha}$ with the minimal periodic part of this expansion. For example, if $\alpha = \sqrt{2} = [1,2,2,2,\dots]$ then we identify $[\PP_{\alpha}]$ with the one term sequence $(2)$. If $\alpha = \sqrt{3} = [1,1,2,1,2,1,2,\dots]$, we identify $[\PP_{\alpha}]$ with $(1,2)$, or equivalently, $(2,1)$. Conversely, the finite non-repeating sequence $(a_0,\ldots,a_{n-1})$ represents the quadratic number $\alpha = [\overline{a_0,\ldots,a_{n-1}}]$, so that every such sequence represents a similarity class of self-similar half-plane packings (namely, the class including $\PP_{\alpha}$). That each sequence represents exactly one similarity class follows from Theorem \ref{thm:eventuallyequal}, since two periodic continued fractions expansions are eventually equal if and only if they have the same periodic part up to a cyclic permutation.
\end{proof}

\begin{exs}
Based on the classification in Theorem \ref{thm:classification} we can give examples of the simplest self-similar half-plane packings. In some sense, the simplest such packing is given by $\alpha = [1,1,\dots] = \frac{1+\sqrt{5}}{2}$. From the perspective of Theorem \ref{thm:classification}, this is represented by the singleton sequence (1). The corresponding circle packing $\PP_{\alpha}$ has a self-similarity constructed from a single circle replacement. This is displayed in Figure \ref{fig:selfsim1}.

The next simplest example is $\alpha = [2,2,\dots] = 1+\sqrt{2}$, represented by the singleton sequence (2). The corresponding circle packing $\PP_{\alpha}$ has a self-similarity obtained by doing two circle replacements. This appears in Figure \ref{fig:selfsim2}.

There are two different self-similar packings for which a self-similarity involves three circle replacements. Corresponding to the sequence (3), we have $\alpha = [3,3,\dots] = \frac{3+\sqrt{13}}{2}$. The packing $\PP_{\alpha}$ is shown in Figure \ref{fig:selfsim3}. Corresponding to the sequence (1,2) we have $\alpha = [1,2,1,2,\dots] = \frac{1+\sqrt{3}}{2}$. This packing is shown in Figure \ref{fig:selfsim12}.
\end{exs}

\begin{figure}
\begin{overpic}[scale = .75]{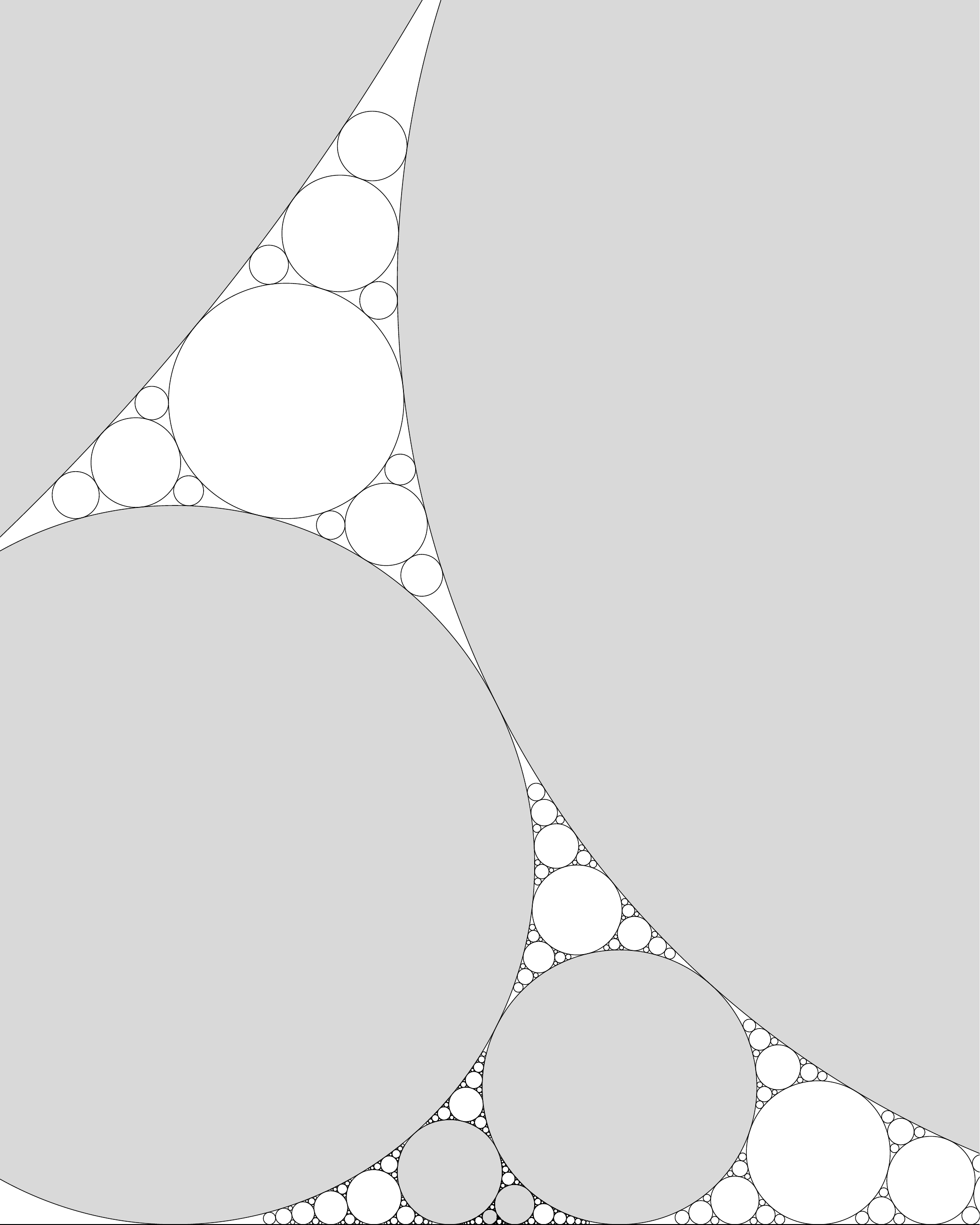}
	\put(223,-10){\large $\uparrow$}
	\put(238,-10){\large $\nwarrow$}
	\put(218,-24){\Large $X_0$}
	\put(249,-24){\Large $Y_0$}
\end{overpic}
	
\vspace{10mm}	
	
	\caption{Eight generations of the packing $\PP_{\alpha}$, where $\alpha = [\overline{1}] = \frac{1 + \sqrt{5}}{2}$. The circles corresponding to the circle replacement algorithm are shaded gray.}
	\label{fig:selfsim1}
\end{figure}

\begin{figure}
\begin{overpic}[scale = .75]{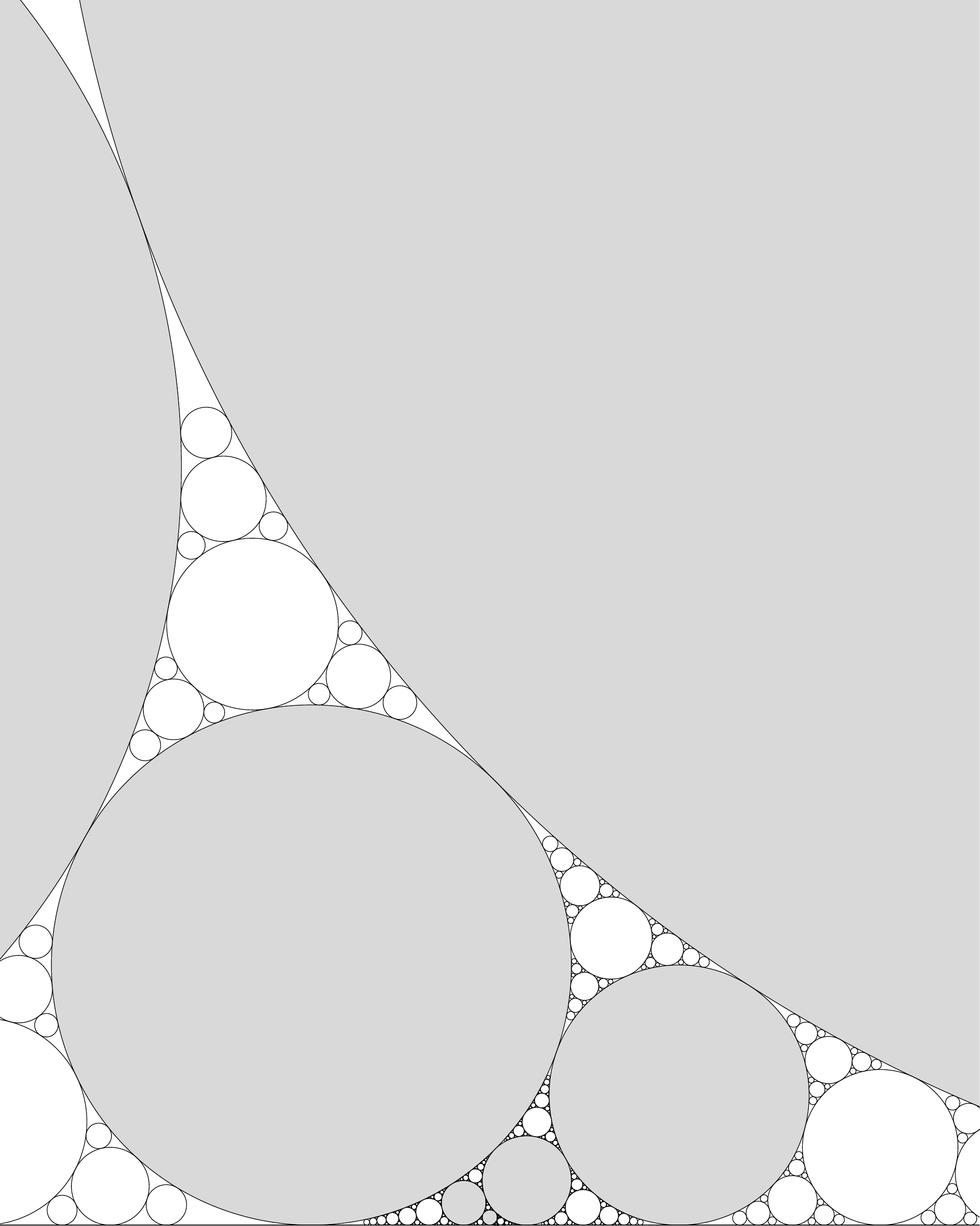}
	\put(223,-10){\large $\uparrow$}
	\put(242,-10){\large $\nwarrow$}
	\put(218,-24){\Large $X_0$}
	\put(253,-24){\Large $Y_0$}
\end{overpic}
	
\vspace{10mm}	
	
	\caption{Eight generations of the packing $\PP_{\alpha}$, where $\alpha = [\overline{2}] = 1 + \sqrt{2}$. The circles corresponding to the circle replacement algorithm are shaded gray.}
	\label{fig:selfsim2}
\end{figure}

\begin{figure}
\begin{overpic}[scale = .75]{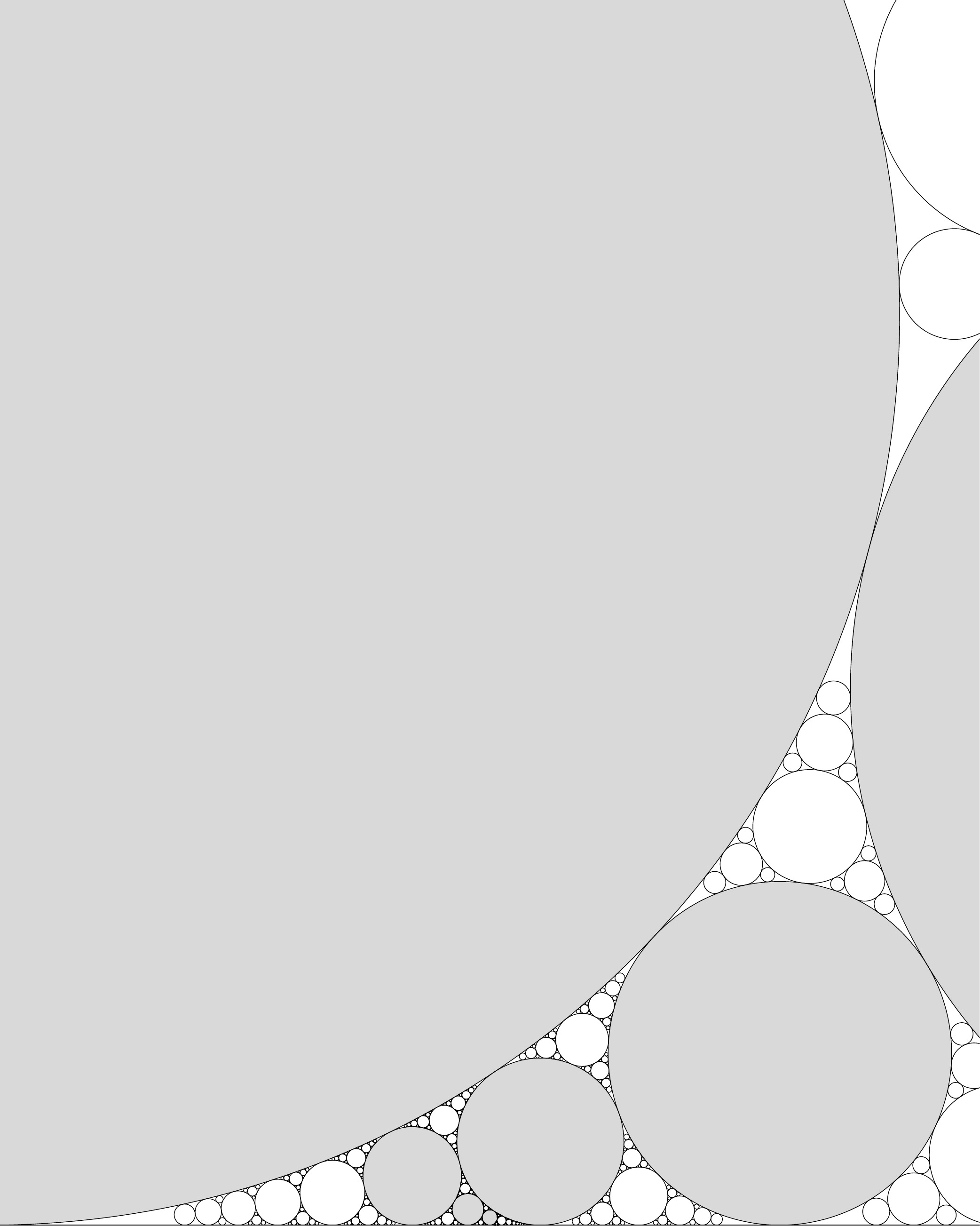}
	\put(223,-10){\large $\uparrow$}
	\put(245,-10){\large $\uparrow$}
	\put(218,-24){\Large $X_0$}
	\put(242,-24){\Large $Y_0$}
\end{overpic}
	
\vspace{10mm}	
	
	\caption{Eight generations of the packing $\PP_{\alpha}$, where $\alpha = [\overline{3}] = \frac{3 + \sqrt{13}}{2}$. The circles corresponding to the circle replacement algorithm are shaded gray.}
	\label{fig:selfsim3}
\end{figure}

\begin{figure}
\begin{overpic}[scale = .75]{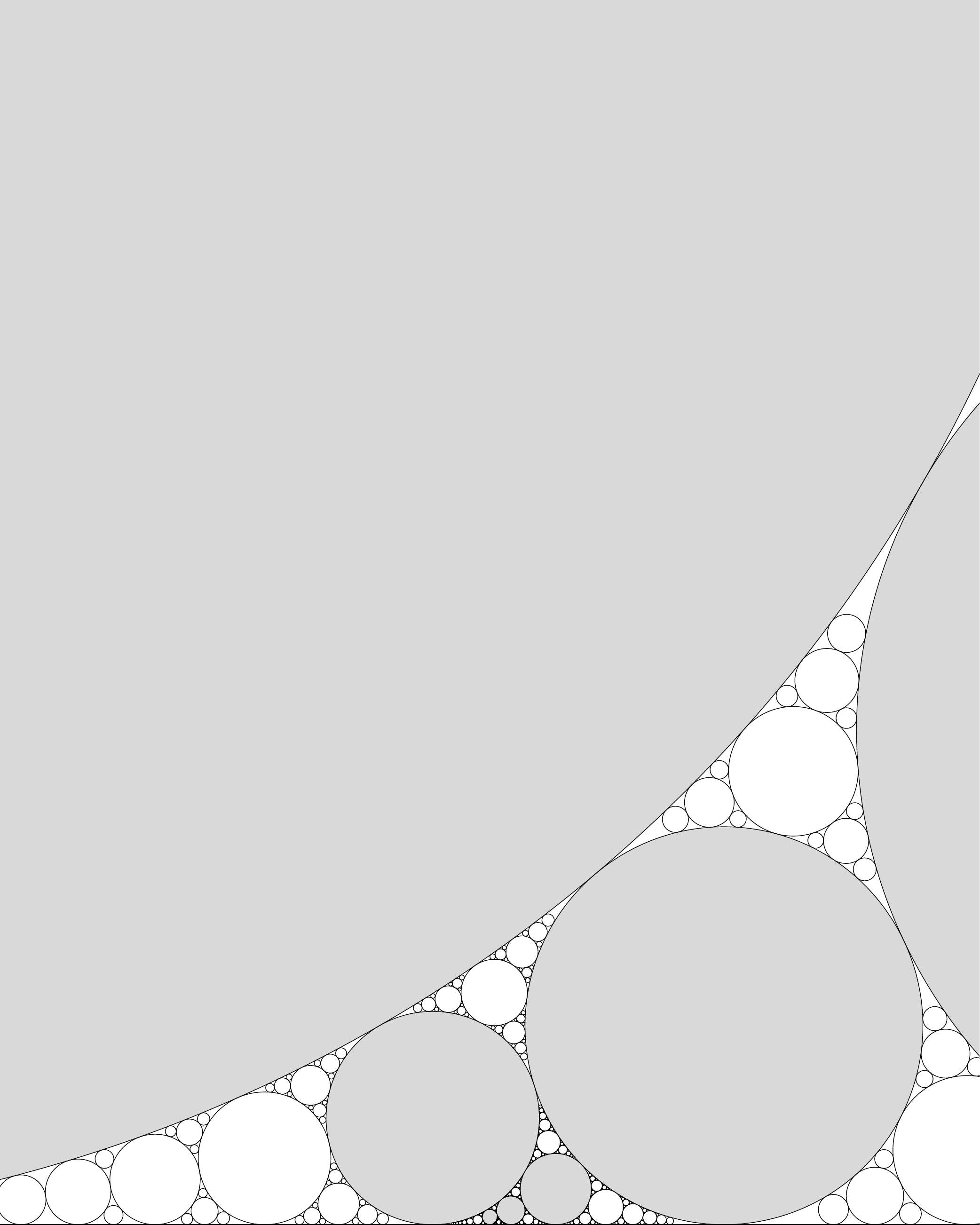}
	\put(223,-10){\large $\uparrow$}
	\put(237,-10){\large $\nwarrow$}
	\put(218,-24){\Large $X_0$}
	\put(248,-24){\Large $Y_0$}
\end{overpic}
	
\vspace{10mm}	
	
	\caption{Eight generations of the packing $\PP_{\alpha}$, where $\alpha = [\overline{1,2}] = \frac{1 + \sqrt{3}}{2}$. The circles corresponding to the circle replacement algorithm are shaded gray.}
	\label{fig:selfsim12}
\end{figure}

We have seen that the self-similarity of the packing $\PP_\alpha$ is reflected in the continued fraction expansion of $\alpha$. In fact, it turns out that every self-similarity of $\PP_{\alpha}$ comes about from the periodicity of the continued fraction expansion. In particular, the self-similarity groups of the packings in Figures \ref{fig:selfsim1}-\ref{fig:selfsim12} are generated by those arising from the circle replacement algorithm. We can make this more precise as follows.

Suppose for simplicity that $\alpha$ has a \emph{purely periodic} continued fraction expansion:
\[ \alpha = [a_0,\dots,a_{n-1},a_0,\dots,a_{n-1},a_0,\dots] = [\overline{a_0,\dots,a_{n-1}}]. \]
(By Theorem \ref{thm:eventuallyequal}, we lose no generality in doing so.) After performing $N = a_0 + \dots + a_{n-1} + n$ steps of the continued fraction algorithm, the initial segment consisting of the periodic part of the continued fraction expansion is removed; therefore $\alpha_N = \alpha$. It follows by Lemma \ref{lem:cfe} that applying $N$ steps of the circle replacement algorithm to the pair $(X_0,Y_0) = (C_{(1,0)},C_{(0,1)})$ yields a pair $(X_N,Y_N)$ whose curvatures are in the same ratio (i.e., $\alpha^2$) as the original circles $(X_0,Y_0)$. There is therefore a similarity of $\PP_{\alpha}$ that maps $X_0$ to $X_N$ and $Y_0$ to $Y_N$ by Lemma \ref{similar}. (Note that if the continued fraction expansion of $\alpha$ is not purely periodic, the above argument determines instead a similarity mapping $(X_M,Y_M)$ to $(X_N,Y_N)$ for some $M,N$.) In fact, this argument may be repeated to show that there is a similarity of $\PP_{\alpha}$ that maps $X_0$ to $X_{kN}$ and $Y_0$ to $Y_{kN}$ for each $k \in \zz_{\ge 0}$. The next lemma tells us that every self-similarity of $\PP_{\alpha}$ arises from the continued fraction expansion of $\alpha$ in this way.

\begin{lem}\label{lem:selfsimcfe}
Suppose $\alpha$ is the positive real number with periodic continued fraction expansion $[\overline{a_0,\ldots,\alpha_{n-1}}]$ and that this is the minimal periodic part. Let $\Phi$ denote the generator of $\Symm(\PP_{\alpha}) \cong \mathbb{Z}$ for which $\Phi$ has scale factor greater than 1, and let $(X_0,Y_0) = (C_{(1,0)},C_{(0,1)})$ be the generators for $\PP_{\alpha}$. Then
\[ \Phi^{k}(X_0) = X_{kN} \ , \ \Phi^{k}(Y_0) = Y_{kN}\]
for each $k \geq 0$, where $N = a_0 + \cdots + a_{n-1} + n$ as above, and $\Phi^{k}$ denotes the $k$-fold composition of $\Phi$ with itself (and $\Phi^{0}$ is the identity map on $\rr^2$).
\end{lem}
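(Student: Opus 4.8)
The plan is to identify the specific similarity produced by one period of the circle replacement algorithm with the distinguished generator $\Phi$, and then read off the formula by iteration. Write $\Phi_0$ for the similarity furnished by Lemma \ref{similar} that carries the triple $\{X_0,Y_0,L\}$ to $\{X_N,Y_N,L\}$: it exists because $\alpha_N=\alpha$ forces the curvature ratios of $(X_N,Y_N)$ to agree with those of $(X_0,Y_0)$, and it fixes $L$ because $\PP_\alpha$ is not a strip packing (Corollary \ref{strip}), so $L$ is its unique line. Since the curvatures of the $Y_j$ strictly decrease across each step (B) and tend to $0$ by Lemma \ref{lem:convergents}, we have $\curv(Y_N)<\curv(Y_0)=1$, so $\Phi_0$ expands, i.e.\ has scale factor $>1$. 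Hence $\Phi_0\in\Symm(\PP_\alpha)$ and, since $\Symm(\PP_\alpha)\cong\mathbb{Z}$ is generated by $\Phi$, we may write $\Phi_0=\Phi^m$ for some $m\ge 1$.

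The first key step is an equivariance observation: the circle replacement algorithm depends only on ratios of curvatures and on tangency, both of which are preserved by any similarity, so applying a similarity commutes with running the algorithm. Thus a similarity carrying $(X_0,Y_0)$ to $(X_N,Y_N)$ carries $(X_j,Y_j)$ to $(X_{j+N},Y_{j+N})$ for every $j$; applying this to $\Phi_0$ and inducting on $k$ gives $\Phi_0^{k}(X_0)=X_{kN}$ and $\Phi_0^{k}(Y_0)=Y_{kN}$ for all $k\ge 0$. It therefore remains only to prove that $m=1$, i.e.\ that $\Phi_0$ generates $\Symm(\PP_\alpha)$.

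To see this I would show that every $\Psi\in\Symm(\PP_\alpha)$ with scale factor $>1$ is a power of $\Phi_0$. Such a $\Psi=\Phi^{t}$ fixes $L$ and has its unique fixed point on $L$; expansion about that common fixed point nests the bounded interstices along the orbit, so that $X_0$ and $Y_0$ lie in the bounded interstice formed by $\Psi(X_0)$ and $\Psi(Y_0)$. Lemma \ref{lem:replacementcircles} then identifies the unordered pair $\{\Psi(X_0),\Psi(Y_0)\}$ with some $\{X_j,Y_j\}$, and since $\Psi$ preserves the curvature ratio $\alpha^2$ we get $\alpha_j=\alpha$ (after advancing by at most one step (B) to fix the ordering, exactly as in the proof of Theorem \ref{thm:eventuallyequal}). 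The minimality of the period now enters decisively: $\alpha_j=\alpha$ holds if and only if the continued fraction algorithm has removed a whole number of full periods, which—because $[\,\overline{a_0,\dots,a_{n-1}}\,]$ has minimal period $n$—occurs exactly when $N\mid j$. Thus $j=kN$, and comparing the actions on $X_0$, $Y_0$, $L$ via Lemma \ref{3circ} gives $\Psi=\Phi_0^{k}$.

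This shows $\langle\Phi_0\rangle=\Symm(\PP_\alpha)$, so $\Phi_0$ is a generator; having scale factor $>1$ it equals the distinguished generator $\Phi$, and the desired formula $\Phi^{k}(X_0)=X_{kN}$, $\Phi^{k}(Y_0)=Y_{kN}$ follows from the equivariance step. The main obstacle is the third paragraph's crux—converting \emph{minimality} of the period into the statement that $\Phi_0$ is a \emph{generator}—which rests on two points needing care: the geometric claim that an expanding self-similarity places $(X_0,Y_0)$ inside the bounded interstice of its image, so that Lemma \ref{lem:replacementcircles} applies, and the number-theoretic claim that $\alpha_j=\alpha$ precisely at multiples of $N$.
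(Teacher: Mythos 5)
Your overall architecture matches the paper's: construct the similarity $\Phi_0$ carrying $(X_0,Y_0,L)$ to $(X_N,Y_N,L)$, check it expands, use equivariance of the circle replacement algorithm to get $\Phi_0^k(X_0)=X_{kN}$, and reduce everything to showing that every expanding self-similarity sends $(X_0,Y_0)$ to some $(X_m,Y_m)$, whence minimality of the period forces $N\mid m$. But there is a genuine gap at exactly the step you flag as "needing care": the claim that an expanding self-similarity $\Psi$ places $X_0$ and $Y_0$ inside the bounded interstice formed by $\Psi(X_0)$ and $\Psi(Y_0)$. Your justification --- ``expansion about that common fixed point nests the bounded interstices along the orbit'' --- presupposes that the fixed point of $\Psi$ lies on $L$ strictly between the tangency points of $X_0$ and $Y_0$, and nothing in your sketch establishes where that fixed point sits. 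Worse, the sketch makes no use of the hypothesis that the expansion of $\alpha$ is \emph{purely} periodic; if the heuristic were valid it would apply verbatim to, say, $\alpha=[2,\overline{1}]$, where the conclusion is false (as the paper notes in the paragraph preceding the lemma, for periodic-but-not-purely-periodic $\alpha$ the natural similarity carries $(X_M,Y_M)$ to $(X_{M+N},Y_{M+N})$ for some $M>0$, not $(X_0,Y_0)$ to $(X_N,Y_N)$). So the geometric claim cannot be established without invoking pure periodicity, and your argument never does.

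This step is the heart of the paper's proof and is handled quite differently there: writing $\psi=\Psi^{-1}$ explicitly via the parametrization of $\Symm(\PP_{\alpha})$ from the proof of Theorem \ref{thm}, namely $\psi(C_{(1,0)})=C_{\left(\frac{x-yq}{2},\,-yr\right)}$ with $x^2-Dy^2=\pm 4$, the interstice condition reduces (via Lemmas \ref{boundedpositive} and \ref{inversion}) to the sign conditions $\frac{x-yq}{2}>0$ and $-yr>0$; Galois' theorem (pure periodicity is equivalent to $\alpha$ being reduced) gives $q,r<0$, and an estimate on $|x+y\sqrt{D}|$ coming from the scale factor being less than $1$ pins down the signs of $x$ and $y$. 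Your second flagged point --- that $\alpha_j=\alpha$ only when $N\mid j$ --- is fine and does follow from minimality of the period as you indicate; but without a genuine proof of the interstice claim the lemma is not established.
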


\begin{proof}
We have already argued that for each $k \ge 0$ there is a self-similarity $\Phi_k$ of $\PP_{\alpha}$ such that
$$\Phi_k(X_0) = X_{kN} \mbox{ \ \ \ and \ \ \ } \Phi_k(Y_0) = Y_{kN}.$$
Furthermore, if $N$ does not divide $m$, then $\alpha_m \ne \alpha$, so that the \emph{only} self-similarities that correspond to the circle replacement algorithm are the $\Phi_k$.

Because the radius of $Y_0$ is 1, the scale factor $\mu_k$ of $\Phi_k$ is equal to the radius of $\Phi_k(Y_0) = Y_{kN}$, which increases as $k$ increases by the definition of the circle replacement algorithm. (Note that the $Y_{kN}$ must necessarily be distinct.) Similarly, since the scale factor for $\Phi^{k}$ is $\mu^k$, with $\mu > 1$, the scale factor for $\Phi^{k}$ also increases as $k$ increases. It will therefore suffice to show that each self-similarity $\phi$ with scale factor greater than 1 satisfies
$$\phi(X_0) = X_m \mbox{ \ \ \ and \ \ \ } \phi(Y_0) = Y_m$$
for some $m \ge 1$.

By Lemma \ref{lem:replacementcircles}, we may reduce this problem to showing that each of $X_0$ and $Y_0$ either lies in the bounded interstice for $\phi(X_0)$ and $\phi(Y_0)$ or is equal to one of $\phi(X_0)$ and $\phi(Y_0)$. Because self-similarities preserve the basic structure of the packing, $X_0$ (resp. $Y_0$) lies in the bounded interstice for $\phi(X_0)$ and $\phi(Y_0)$ if and only if $\phi^{-1}(X_0)$ (resp. $\phi^{-1}(Y_0)$) lies in the bounded interstice for $X_0$ and $Y_0$. It is therefore enough to show that if $\psi$ is a self-similarity of $\PP_{\alpha}$ with scale factor \emph{less than} 1, then $\psi(X_0)$ lies in the bounded interstice for $X_0$ and $Y_0$. In that case $\psi(Y_0)$ will have to either be in the bounded interstice, or be equal to $X_0$ or $Y_0$.

By the proof of Theorem \ref{thm}, for any nontrivial self-similarity $\psi$ of $\PP_{\alpha}$, we have
\begin{align*}
\psi(X_0) = \psi(C_{(1,0)}) &= C_{\left(\frac{x-yq}{2},-yr\right)}\\
\psi(Y_0) = \psi(C_{(0,1)}) &= C_{\left(yp,\frac{x+yq}{2}\right)},
\end{align*}
where $x$ and $y$ are integers satisfying $|x^2 - Dy^2| = 4$ with $y \neq 0$. Here $f(x) = px^2 + qx + r$, $p > 0$, is the primitive integer polynomial satisfied by $\alpha$, and $D = q^2 - 4pr$ is the discriminant of $f$.

To show that $\psi(C_{(1,0)})$ either lies in the bounded interstice for $C_{(1,0)}$ and $C_{(0,1)}$, or is equal to $C_{(0,1)}$, it will suffice by Lemmas \ref{boundedpositive} and \ref{inversion} to show that
\begin{equation}\label{eqn:endproof}
\frac{x-yq}{2} > 0 \mbox{ \ \ \ and \ \ \ } -yr > 0.
\end{equation}

To see this, we first recall a result due to Galois \cite{davenport:2003,galois:1829} concerning purely periodic continued fractions. Because $\alpha$ has a purely periodic continued fraction expansion, it is a reduced quadratic number; i.e., $\alpha > 1$ and $-1 < \alpha' < 0$, where $\alpha'$ is the quadratic conjugate of $\alpha$. Since $q = -p(\alpha + \alpha')$ and $r = p\alpha\alpha'$, and since $p > 0$, it follows that $q,r < 0$. It is now sufficient to show that $x > 0$ and $y >0$.

First note that we cannot have both $x,y < 0$ since then the label on $\psi(C_{(1,0)})$ would consist of two negative numbers, which is impossible by Proposition \ref{uniquecircle}. We now show also that $x$ and $y$ cannot have different signs.

Let $\mu'$ be the scale factor of $\psi$ which, by assumption, is less than $1$. Since $\curv(C_{(0,1)}) = 1$ and $\curv(\psi(C_{(0,1)})) = (yp\alpha + \frac{x+yq}{2})^2$, it follows that
\[\mu' = \frac{1}{\left(yp\alpha + \frac{x+yq}{2}\right)^2}.\]
Therefore
\[ \left|yp\alpha + \frac{x+yq}{2}\right| > 1. \]
Since $\alpha$ has purely periodic continued fraction expansion, it is greater than its conjugate, so we have $\alpha = \frac{-q + \sqrt{D}}{2p}$. The above inequality then implies
\[  |x + y\sqrt{D}| > 2. \]
Now $x^2 - Dy^2 = \pm 4$, so $x = \pm \sqrt{Dy^2 \pm 4}$. If $x$ and $y$ have different signs, then this inequality becomes
\[ |\sqrt{Dy^2 \pm 4} - \sqrt{Dy^2}| > 2. \]
But $|\sqrt{t + 4} - \sqrt{t}| \leq 2$ for all $t \geq 0$, so in fact $x$ and $y$ must have the same sign (and $y \neq 0$ since we assumed that $\psi$ was a nontrivial self-similarity). This completes the proof.
\end{proof}

Finally, we can also use continued fractions to see which packings have orientation-reversing self-similarities.

\begin{thm}
For a positive real number $\alpha$, the circle packing $\PP_{\alpha}$ has an orientation-reversing self-similarity if and only if the continued fraction expansion of $\alpha$ has odd period.
\end{thm}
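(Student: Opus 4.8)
The plan is to reduce everything to the case where $\alpha$ is quadratic over $\qq$ and then to read off the orientation type of the \emph{generator} of $\Symm(\PP_\alpha)\cong\zz$ by tracking the circle replacement algorithm through a single period. First, I would dispose of the other cases. If $\alpha$ is neither rational nor quadratic then $\Symm(\PP_\alpha)=1$ by Theorem \ref{thm}, so there is no orientation-reversing self-similarity, while the continued fraction expansion is aperiodic and hence has no period; both sides of the equivalence are false. If $\alpha$ is rational then $\PP_\alpha$ is a strip packing, whose symmetry group contains the orientation-reversing reflection interchanging the two bounding lines, so this case lies outside the periodic regime. The content is the quadratic case. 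Here, since whether a packing admits an orientation-reversing self-similarity is a similarity invariant (conjugation by a fixed similarity preserves the sign of the orthogonal part), and since by Theorem \ref{thm:eventuallyequal} a quadratic $\alpha$ is similar to one whose expansion is purely periodic with the same minimal period length, I may assume $\alpha=[\overline{a_0,\dots,a_{n-1}}]$ with $n$ the minimal period.

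Since $\Symm(\PP_\alpha)\cong\zz$, it contains an orientation-reversing element if and only if its generator is orientation-reversing, because the orthogonal part of the $k$-th power has determinant equal to the $k$-th power of that of the generator. By Lemma \ref{lem:selfsimcfe} the generator $\Phi$ with scale factor greater than $1$ satisfies $\Phi(X_0)=X_N$ and $\Phi(Y_0)=Y_N$, where $(X_0,Y_0)=(C_{(1,0)},C_{(0,1)})$, $N=a_0+\cdots+a_{n-1}+n$, and $(X_N,Y_N)$ is the pair produced by one period of the circle replacement algorithm. By Definition \ref{sigma}, $\Phi$ is orientation-preserving exactly when the matrix $\mathbf{A}(\Phi)$ has determinant $+1$, which happens if and only if $X_N=\Phi(C_{(1,0)})\prec\Phi(C_{(0,1)})=Y_N$. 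So the whole theorem reduces to determining the left–right order of the pair $(X_N,Y_N)$.

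The heart of the argument is then a parity computation for the order of $(X_j,Y_j)$ along the algorithm. I claim that a type-(B) step reverses this order and a type-(A) step preserves it. The type-(B) claim is immediate, as step (B) simply interchanges $X_j$ and $Y_j$. For type (A), write $X_n=C_{(a,b)}$ and $Y_n=C_{(c,d)}$; by Lemma \ref{curv} the unbounded-interstice circle has label $\vec{x}(X_{n+1})=(a-c,b-d)$. If $X_n\prec Y_n$ then Proposition \ref{uniquecircle} gives $ad-bc=1$, and the computation $\det\begin{vmatrix} a-c & b-d\\ a & b\end{vmatrix}=ad-bc=1$ shows $X_{n+1}\prec X_n\prec Y_n$; the symmetric computation handles the reversed configuration, so step (A) always places the new circle on the side of $X_n$ away from $Y_n$ and preserves the order. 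Since one period consists, for each entry $a_k$, of a block of $a_k$ type-(A) steps followed by exactly one type-(B) step, the number of order reversals over a full period equals the number of entries, namely $n$. Starting from $X_0\prec Y_0$, this yields $X_N\prec Y_N$ precisely when $n$ is even, so $\Phi$ is orientation-reversing if and only if $n$ is odd, completing the proof.

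The step I expect to require the most care is the verification that a type-(A) step preserves the order; although it collapses to the short determinant identity above via Proposition \ref{uniquecircle}, one must confirm it in \emph{both} the ordered and the reversed configurations so that the parity bookkeeping is uniform across an entire period. The secondary point to state carefully is the reduction to a purely periodic expansion, where I must check that the minimal period length (and not merely the similarity class) is genuinely preserved under passage to an eventually-equal expansion.
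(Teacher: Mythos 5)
Your proof is correct and follows essentially the same route as the paper: both reduce to Lemma \ref{lem:selfsimcfe}, identify orientation with the left--right order of the pair $(X_j,Y_j)$ via Definition \ref{sigma}, and count one order reversal per step (B), i.e.\ per entry of the periodic part. The only differences are that you explicitly verify (via the determinant computation and Proposition \ref{uniquecircle}) the claim that step (A) preserves the order --- which the paper merely asserts --- and that you spell out the reduction to a purely periodic expansion; note only that the label of the unbounded-interstice circle should be cited to Definition \ref{labelling} rather than Lemma \ref{curv}.
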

\begin{proof}
We know from Lemma \ref{lem:selfsimcfe} that all self-similarities of $\PP_{\alpha}$ correspond to periods in the continued fraction expansion of $\alpha$. The self-similarity $\Phi$ is orientation-reversing if the corresponding pairs of circles in the circle replacement algorithm, say $(X_n,Y_n)$ and $(X_m,Y_m)$, for which $\Phi(X_n) = X_m$ and $\Phi(Y_n) = Y_m$, satisfy $X_n \prec Y_n$, but $Y_m \prec X_m$. Since the orientation of the circles $X_k,Y_k$ changes once for each case of step (B) in the algorithm, that is, for each term in the continued fraction expansion, we see that $\Phi$ is orientation-reversing if and only if it corresponds to an odd period.
\end{proof}

In the examples, we see that the circle packings $\PP_{\alpha}$ for $\alpha = [\bar{1}], [\bar{2}], [\bar{3}]$ do have orientation-reversing self-similarities, whereas that for $\alpha = [\overline{1,2}]$ does not.

\bibliographystyle{amsplain}
\bibliography{jdoyle}

\end{document}